\documentclass{article}
\usepackage{psfrag}
\usepackage[parfill]{parskip}
\usepackage{float, graphicx}
\usepackage[]{epsfig}
\usepackage{amsmath, amsthm, amssymb,}
\usepackage{epsfig}
\usepackage{verbatim}
\usepackage{multicol}
\usepackage{url}

\usepackage{latexsym}
\usepackage{mathrsfs}
\usepackage[colorlinks, bookmarks=true]{hyperref}
\usepackage{graphicx, color}
\definecolor{vdarkblue}{rgb}{0,0.2,0.7}

\newcommand{\yukun}{\color{vdarkblue}}
\usepackage{graphicx}
\usepackage{amsmath}
\usepackage{enumerate}
\usepackage[normalem]{ulem}
\usepackage{bm}
\usepackage{dsfont}
\usepackage{stmaryrd}
\usepackage{enumitem}
\usepackage[noadjust]{cite}

\usepackage{indentfirst}
\setlength\parindent{12pt}

\flushbottom
\usepackage{color}
\usepackage{xcolor}
\usepackage{hyperref}
\usepackage{mathtools}
\usepackage[margin=1.1in]{geometry}
\usepackage{cleveref}

\makeatletter

\def\@settitle{\begin{center}%
    \bfseries
 \normalfont\LARGE\@title
  \end{center}%
}
\def\@setauthors{\begin{center}%
 \normalsize\@author
  \end{center}%
}

\makeatother

\numberwithin{equation}{section}

\renewcommand{\cal}{\mathcal}
\newcommand\cA{{\mathcal A}}
\newcommand\cB{{\mathcal B}}

\newcommand{\cD}{{\cal D}}

\newcommand{\cF}{{\cal F}}
\newcommand{\cG}{{\cal G}}

\newcommand\cH{{\mathcal H}}

\newcommand{\cN}{{\cal N}}

\newcommand{\cX}{{\mathcal X}}
\newcommand{\cY}{{\mathcal Y}}

\newcommand{\bfS}{{\bf S}}

\newcommand{\fa}{{\mathfrak a}}
\newcommand{\fj}{{\mathfrak j}}
\newcommand{\fb}{{\mathfrak b}}
\newcommand{\fc}{{\mathfrak c}}

\newcommand{\fo}{{\mathfrak o}}

\newcommand{\fp}{{\mathfrak p}}

\newcommand{\fg}{{\mathfrak g}}

\newcommand{\bmu}{{\bm{u}}}
\newcommand{\bmv}{{\bm{v}}}

\newcommand{\fC}{{\mathfrak C}}

\newcommand{\bfm}{{\bf m}}

\newcommand{\be}{\begin{equation}}
\newcommand{\ee}{\end{equation}}

\newcommand{\rd}{{\rm d}}

\newcommand{\ri}{\mathrm{i}}

\newcommand{\bC}{{\mathbb C}}
\newcommand{\bE}{\mathbb{E}}

\newcommand{\bH}{\mathbb{H}}

\newcommand{\bP}{\mathbb{P}}

\newcommand{\bR}{{\mathbb R}}
\newcommand{\bX}{{\mathbb X}}
\newcommand{\bS}{\mathbb S}
\newcommand{\bT}{\mathbb T}
\newcommand{\bV}{\mathbb V}
\newcommand{\bW}{\mathbb W}

\newcommand{\bZ}{\mathbb{Z}}

\newcommand{\bI}{\mathbb{I}}

\newcommand{\al}{\alpha}

\newcommand{\la}{\lambda}

\DeclareMathOperator{\Tr}{Tr}

\DeclareMathOperator{\supp}{supp}
\DeclareMathOperator{\dist}{dist}

\DeclareMathOperator{\Har}{{Har}}

\DeclareMathOperator{\OO}{O}
\DeclareMathOperator{\oo}{o}

\renewcommand{\Re}{\mathop{\mathrm{Re}}}
\renewcommand{\Im}{\mathop{\mathrm{Im}}}

\newcommand{\deq}{\mathrel{\mathop:}=} 
 
\renewcommand{\leq}{\leqslant}
\renewcommand{\geq}{\geqslant}



\newcommand{\del}{\partial}

\newcommand{\wt}{\widetilde}


\newcommand{\qq}[1]{[\![{#1}]\!]}

\newcommand{\beq}{\begin{equation}}
\newcommand{\eeq}{\end{equation}}


\theoremstyle{plain} 
\newtheorem{theorem}{Theorem}[section]
\newtheorem*{theorem*}{Theorem}
\newtheorem{lemma}[theorem]{Lemma}
\newtheorem*{lemma*}{Lemma}

\newtheorem*{corollary*}{Corollary}
\newtheorem{proposition}[theorem]{Proposition}
\newtheorem*{proposition*}{Proposition}

\newtheorem*{assumption*}{Assumption}

\newtheorem{definition}[theorem]{Definition}
\newtheorem*{definition*}{Definition}

\newtheorem*{example*}{Example}
\newtheorem{remark}[theorem]{Remark}

\newtheorem*{remark*}{Remark}
\newtheorem*{remarks*}{Remarks}

\newcommand{\col}{\vcentcolon}

\newcommand{\msc}{m_{\rm sc}}
\newcommand{\md}{m_d}

\newcommand{{\bfa}}{{\mathbf a}}
\newcommand{{\bfG}}{{\mathbf G}}

\newcommand{\sfS}{{\sf S}}

\newcommand{\sfF}{{\sf F}}

\def\author#1{\par
    {\centering{\authorfont#1}\par\vspace*{0.05in}}
}

\def\titlefont{\fontsize{13}{15}\bfseries\boldmath\selectfont\centering{}}
\def\authorfont{\fontsize{13}{15}}

\let\affiliationfont\rhfont

\def\address#1{\par
    {\centering{\affiliationfont#1\par}}\par\vspace*{11pt}
}

\def\body{
\setcounter{footnote}{0}
\def\thefootnote{\alph{footnote}}
\def\@makefnmark{{$^{\rm \@thefnmark}$}}
}

\def\title#1{
    \thispagestyle{plain}
    \vspace*{-14pt}
    \vskip 79pt
    {\centering{\titlefont #1\par}}%
    \vskip 1em
}


\usepackage{scalerel}
\newcommand{\cT}{{\mathcal T}}

\definecolor{forestgreen}{RGB}{34, 139, 34}


\newcommand{\GG}{{\mathcal G}}
\newcommand{\fR}{{\mathfrak R}}

\newcommand{\tcG}{{\widetilde \cG}}
\newcommand{\oOmega}{\overline{\Omega}}

\begin{document}

\title{Gaussian Waves and Edge Eigenvectors of Random Regular Graphs}

\vspace{1.2cm}

\noindent   \begin{minipage}[c]{0.36\textwidth}
 \author{Yukun He}
\address{City University of Hong Kong
   yukunhe@cityu.edu.hk}
 \end{minipage}
\begin{minipage}[c]{0.31\textwidth}
 \author{Jiaoyang Huang}
\address{University of Pennsylvania\\
   huangjy@wharton.upenn.edu}
 \end{minipage}
\begin{minipage}[c]{0.31\textwidth}
 \author{Horng-Tzer Yau}
\address{Harvard University \\
   htyau@math.harvard.edu}
 \end{minipage}

\begin{abstract}
Backhausz and Szegedy (2019) demonstrated that the almost eigenvectors of random regular graphs converge to Gaussian waves with variance 
$0\leq \sigma^2\leq 1$. In this paper, we present an alternative proof of this result for the edge eigenvectors of random regular graphs, establishing that the variance must be $\sigma^2=1$. Furthermore, we show that the eigenvalues and eigenvectors are asymptotically independent. Our approach introduces a simple framework linking the weak convergence of the imaginary part of the Green's function to the convergence of eigenvectors, which may be of independent interest.
\end{abstract}

\bigskip
{
\hypersetup{linkcolor=black}
\setcounter{tocdepth}{1}
\tableofcontents
}

\newpage
\section{Introduction}\label{s:intro}

Given a graph $\mathcal{G}$ on $N$ vertices with an adjacency matrix $A$, the eigenvalues and eigenvectors of $A$ encode key structural and dynamical properties of the graph. The study of these spectral phenomena has a rich history, spanning over half a century, with numerous applications across mathematics, computer science, and physics.  For a broad overview of spectral graph theory, we refer readers to the foundational texts \cite{chung1997spectral, brouwer2011spectra}. The connection between eigenvalues and the expansion properties of graphs is explored in the survey article \cite{hoory2006expander}. Moreover, the applications of eigenvalues and eigenvectors in various algorithms, such as combinatorial optimization, spectral partitioning, and clustering, are discussed in \cite{mohar1997some, mohar1993eigenvalues, pothen1990partitioning, shi2000normalized,  spielman2012spectral, spielman2007spectral, spielman2007spectral, spielman1996spectral}.

In this paper, we study random $d$-regular graphs, sampled uniformly from all simple $d$-regular graphs on $N$ vertices. Recent work by the second and third named authors with Theo McKenzie \cite{huang2024ramanujan} established edge universality for these graphs, proving that the second largest and smallest eigenvalues asymptotically follow the Tracy-Widom$_1$ distribution associated with the Gaussian Orthogonal Ensemble (GOE) \cite{tracy1996orthogonal}. As a consequence, for large $N$, approximately $69\%$ of $d$-regular graphs are Ramanujan \cite{lubotzky1988ramanujan, margulis1988explicit}. Beyond eigenvalues, numerous properties of eigenvectors for both random and deterministic $d$-regular graphs have been extensively studied. Notable results include the delocalization of eigenvectors \cite{bauerschmidt2019local, huang2024spectrum, brooks2013non, alon2021high,ganguly2021non}, quantum ergodicity \cite{anantharaman2015quantum, anantharaman2017quantumanderson, anantharaman2019quantum, anantharaman2019recent, anantharaman2017quantum}, and the structure of nodal domains \cite{ganguly2023many, huang2020size, dekel2011eigenvectors}.

Here, we extend the edge universality of random $d$-regular graphs to include the coefficients of the \emph{eigenvectors}. Backhausz and Szegedy \cite{backhausz2019almost} previously showed that the \emph{almost eigenvectors} of random regular graphs converge to \emph{Gaussian waves}, which are Gaussian processes on infinite $d$-regular trees that satisfy eigenvalue equations and are invariant under automorphisms of the tree (see \cite{elon2009gaussian}), with variance $0 \leq \sigma^2 \leq 1$.  We provide an alternative proof for the edge eigenvectors of random regular graphs, demonstrating that the variance is necessarily $\sigma^2 = 1$. Furthermore, we establish that the edge eigenvalues and eigenvectors are asymptotically independent and jointly converge to the \emph{Airy$_1$ point process} (see \cite{tracy1996orthogonal, ramirez2011beta}) and independent copies of Gaussian waves. 

Our approach uses the local resampling technique (see \Cref{s:local_resampling}), as introduced in \cite{bauerschmidt2019local,huang2024spectrum}, to analyze Green's functions, which encode information about eigenvectors. By randomizing the boundary edges of large neighborhoods around a vertex $o$, we express the values of edge eigenvectors near $o$ as a Poisson kernel-type integral over these randomized boundary values. Averaging over a large number of such random values yields the asymptotic normality of eigenvectors.

 For Wigner-type random matrices, a series of works has established the universality of eigenvalue statistics. In the bulk, universality was demonstrated in \cite{bourgade2016fixed, erdHos2010bulk, erdHos2012rigidity, erdHos2011universality, erdHos2012bulk, erdHos2015gap, tao2011random, aggarwal2019bulk, erdos2010universality, aggarwal2018goe}, while edge universality was proven in \cite{erdHos2012rigidity, soshnikov1999universality, tao2010random, knowles2013eigenvector, lee2014necessary}. These results show that, after proper normalization and shifts, spectral statistics agree with those of the GOE. Additionally, the asymptotic normality of eigenvectors, both in the bulk and at the edge, has been established in \cite{knowles2013eigenvector, tao2012random, bourgade2017eigenvectormoment}. For Erdős–Rényi graphs $G(N, p)$ with $Np \geq N^{\oo(1)}$ and random $d$-regular graphs with $d \geq N^{\oo(1)}$, significant progress has also been made. The bulk eigenvalue universality \cite{bauerschmidt2017bulk, huang2015bulk, erdHos2013spectral, erdHos2012spectral}, edge eigenvalue universality \cite{lee2018local, he2021fluctuations, huang2020transition, huang2022edge, lee2024higher, bauerschmidt2020edge, huang2023edge, he2024spectral}, and the asymptotic normality of bulk eigenvectors \cite{bourgade2017eigenvector} have all been rigorously established.

Most of these results rely on comparison with GOE matrices or Gaussian divisible models (random matrices with a small Gaussian component). In contrast, our proof is more direct: we reveal the joint convergence of eigenvalues and eigenvectors by analyzing Green's functions. In a companion paper \cite{he2025extremal}, the first and second named authors, in collaboration with Chen Wang, further develop this method to prove the asymptotic normality of sparse Erd{\H o}s–R{\'e}nyi graphs in arbitrary directions and normal fluctuation in quantum ergodicity at the edge for Wigner matrices.

The random wave conjecture, proposed by Berry \cite{berry1977regular, berry1983semiclassical}, suggests that eigenfunctions of classically chaotic systems behave like Gaussian random fields in the high-energy limit. This conjecture has inspired a substantial body of research; for a comprehensive overview, see \cite{zelditch2017eigenfunctions}. Random $d$-regular graphs share fundamental characteristics with classically chaotic systems, including strong expansion properties and delocalized eigenvectors \cite{smilansky2013discrete}. Our result, which establishes the convergence of edge eigenvectors to Gaussian waves, provides a graph-theoretic analogue of Berry's conjecture for random $d$-regular graphs.

\subsection{Main Results}

Before stating the main results, we  introduce some necessary notations. The \emph{Airy$_1$ point process} introduced in \cite{tracy1996orthogonal}, is a fundamental object in random matrix theory, particularly associated with the GOE matrix. It is a point process $\{A_1\geq A_2\geq A_3\cdots\}$ on $\bR$, describing the limiting distribution of  eigenvalues near the spectral edge of a GOE matrix as the matrix size tends to infinity. The fluctuation of the rightmost particle is governed by the Tracy-Widom$_1$ distribution. More generally, for any $\beta>0$, Airy$_\beta$ point process, introduced in \cite{ramirez2011beta}, describes the edge scaling limit of the Gaussian $\beta$-ensemble, and is characterized as the eigenvalues of the stochastic Airy operator. 

For any $d\geq 3$, let $\cX_d$ denote the infinite $d$-regular tree, and $o$  as a distinguished vertex, referred to as the root. The vertex set of $\cX_d$ is denoted by $\bX_d$. An \emph{eigenvector process} with eigenvalue $\lambda$ is a joint distribution $\{\psi_\lambda(v)\}_{v\in \bX_d}$ of real valued random variables, each with variance $1$ such that the following eigenvector equation holds almost surely
\begin{align*}
\sum_{v\in \bX_d: v\sim o}\psi_\lambda (v)=\lambda \psi_\lambda(o),
\end{align*} 
where the summation is over all neighbors $v$ of $o$. Furthermore, the process is invariant under any automorphism of the $d$-regular tree.

A \emph{Gaussian wave}
is an eigenvector process whose joint distribution is Gaussian. It is established in \cite{elon2009gaussian} that for every
$-d\leq \lambda\leq d$ there exists a unique Gaussian wave $\psi_\la$ with eigenvalue $\lambda$. Let $L$ be the adjacency operator of the infinite $d$-regular tree $\cX_d$, and define  the resolvent operator $(L-z)^{-1}$ for $z\in \bC^+$. For any two vertices $i,j\in\bX_d$, the covariance structure of the Gaussian wave $\psi_\la$ is given by 
\begin{align*}
{\rm Cov}[\psi_\la(i) \psi_\la(j)]
&=\lim_{\varepsilon\rightarrow 0}\frac{\Im[(L-(\lambda+\ri \varepsilon))^{-1}_{ij}]}{\Im[(L-(\lambda+\ri \varepsilon))^{-1}_{oo}]},
\end{align*}
which is explicitly given as 
\begin{align*}
&{\rm Cov}[\psi_\la(i) \psi_\la(j)]=\frac{1}{(d-1)^{r/2}}\left(\frac{d-1}{d}U_r\left(\frac{\lambda}{2\sqrt{d-1}}\right)-\frac{1}{d}U_{r-2}\left(\frac{\lambda}{2\sqrt{d-1}}\right)\right),
\end{align*}
where $r=\dist_{\cX_d}(i,j)$ and $U_r(\theta)=\sin((r+1)\cos^{-1}(\theta))/\sin(\cos^{-1}(\theta))$ are the Chebyshev polynomials of the second kind. 

For the special case of edge eigenvectors, corresponding to $\lambda=2\sqrt{d-1}$. We denote the Gaussian wave by $\Psi=\psi_{2\sqrt{d-1}}$. The covariance structure simplifies to  
\begin{align}\label{e:edge_Gaussian_wave}
{\rm Cov}[\Psi(i) \Psi(j)]=\frac{1}{(d-1)^{r/2}}\left(\frac{d-1}{d}U_r(1)-\frac{1}{d}U_{r-2}(1)\right)=\frac{1}{(d-1)^{r/2}}\left(1+\frac{(d-2)r}{d}\right),
\end{align}
where the final expression follows from evaluating $U_r(1)=r+1$ and $U_{r-2}(1)=r-1$. In the sequel, for any $k\in \mathbb N_+$, we abbreviate $\qq{k}\deq \{1,2,...,k\}$.

\begin{theorem}
\label{t:ev_universality}
Fix $d\geq 3$, and set $\cA=d(d-1)/(d-2)^2$. Fix $k\geq 1$ and radius $r\geq 0$. Let ${A}$ be the adjacency matrix of random $d$-regular graphs $\cG$ on $N$ vertices, and set $H\deq {A}/\sqrt{d-1}$.  Let $\lambda_1 = {d}/{\sqrt{d-1}} \geq \lambda_2 \geq \cdots \geq \lambda_N$ denote the eigenvalues, and let $\bmu_1=\bm1/\sqrt{N}, \bmu_2, \cdots, \bmu_N$ be the associated normalized eigenvectors of $H$. Fix any vertex $o\in \qq{N}$. The rescaled extreme eigenvalues and the corresponding eigenvectors  $\{\pm\bmu_s\}_{2\leq s\leq k+1}$ (each multiplied by a random sign) when restricted to the radius-$r$ ball $\cB_r(o;\cG)$, jointly converge to the Airy$_1$ point process and independent Gaussian waves:

\begin{align*}\begin{split}
& ((\cA N)^{2/3} ( \lambda_{2} - 2 ), (\cA N)^{2/3} ( \lambda_{3} - 2 ),\cdots, (\cA N)^{2/3} ( \lambda_{k+1} - 2 ))\Rightarrow (A_1, A_2, A_3,\cdots, A_k),\\
&(\pm \sqrt{N}\bmu_2|_{\cB_r(o;\cG)}, \pm \sqrt{N}\bmu_3|_{\cB_r(o;\cG)},\cdots, \pm \sqrt{N}\bmu_{k+1}|_{\cB_r(o;\cG)})\Rightarrow (Z_1|_{\cB_r(o;\cX_d)}, Z_2|_{\cB_r(o;\cX_d)}\cdots, Z_k|_{\cB_r(o;\cX_d)}),
\end{split}
\end{align*}
where $A_1, A_2, \cdots$ are the points of the Airy$_1$ point process, and $Z_1, Z_2,\cdots, Z_{k}$ are independent copies of the Gaussian wave $\Psi$ from \eqref{e:edge_Gaussian_wave}. 
An analogous result holds for the smallest eigenvalues and their associated eigenvectors.
\end{theorem}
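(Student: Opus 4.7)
The plan is to deduce the joint convergence statement from three ingredients: (i) the already established edge eigenvalue universality of \cite{huang2024ramanujan}, which gives the Airy$_1$ limit for the rescaled eigenvalues in isolation; (ii) a ``Green's function to eigenvector'' transfer, which turns local information about $\Im G(z)$ on appropriate spectral windows into information about individual edge eigenvectors; and (iii) a local resampling argument which supplies both the Gaussian law and the tree covariance \eqref{e:edge_Gaussian_wave}. The overarching idea is that, at spectral scales $\eta$ which are much smaller than the typical edge eigenvalue gap $\sim N^{-2/3}$ but still accessible to Green's function methods, the quantity $N\,\eta\,\Im G(\lambda_s+\ri\eta)_{vw}/\pi$ essentially equals the single product $\bmu_s(v)\bmu_s(w)$, so that any distributional limit for $\Im G$ on these scales transfers directly to the eigenvector entries.

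First, I would write the spectral resolution
\begin{align*}
\Im G(z)_{vw}=\sum_{s}\frac{\eta\,\bmu_s(v)\bmu_s(w)}{(\lambda_s-E)^2+\eta^2},\qquad z=E+\ri\eta,
\end{align*}
and argue that, with high probability, in a window $|E-2|\lec N^{-2/3}$ the top few $\lambda_s$ are separated on scale $N^{-2/3}$ (this is the eigenvalue rigidity used in \cite{huang2024ramanujan}). Choosing $\eta$ between $N^{-1+\oo(1)}$ (the delocalization/local law scale from \cite{bauerschmidt2019local, huang2024spectrum}) and $N^{-2/3-\oo(1)}$ isolates $\bmu_s(v)\bmu_s(w)$ and lets me replace the vector $(\sqrt{N}\bmu_s(v))_{v\in\cB_r(o;\cG)}$, up to a random global sign, by the rank-one matrix $N\eta\,\Im G(\lambda_s+\ri\eta)/\pi$ restricted to $\cB_r(o;\cG)$. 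Thus it suffices to establish the joint weak convergence of the $k$ ``localized Green's function slices'' taken around the $k$ detected edge eigenvalues, together with those eigenvalues themselves.

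The heart of the proof is then the convergence of the localized Green's function, and for this I would invoke the local resampling technique of \cite{bauerschmidt2019local, huang2024spectrum}. I would embed a radius-$R$ tree-like neighborhood $T$ around $o$ (with $R$ much larger than the fixed radius $r$), expose the boundary edges, and reshuffle them against an independent uniform configuration. Using the Schur complement / tree extension formula, the Green's function entries $G_{vw}$ for $v,w\in\cB_r(o;\cG)$ can be written as a Poisson-type integral against the tree Green's function $(L-z)^{-1}$, with boundary values given by the resolvent entries of the graph with $T$ removed, evaluated at the resampled boundary vertices. By \cite{bauerschmidt2019local, huang2024spectrum} these boundary values are asymptotically i.i.d.\ up to a known conditional mean, so a CLT over the $(d-1)^R$ independent resampled edges yields a Gaussian limit for the normalized boundary contribution, with a covariance computed from the tree kernel; the resulting covariance of $\Im G_{vw}/\Im G_{oo}$ matches exactly the right-hand side in the definition of the Gaussian wave recalled in the introduction, which at $\lambda=2\sqrt{d-1}$ specializes to \eqref{e:edge_Gaussian_wave}.

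To upgrade the marginal eigenvector limits to the joint statement with Airy$_1$ and to independence of the $k$ eigenvectors, I would couple the $k$ Green's function slices at spectral parameters $\lambda_2+\ri\eta,\ldots,\lambda_{k+1}+\ri\eta$ into a single local resampling step; the $k$ CLTs share the same randomized boundary but weight it by $k$ \emph{different} approximate spectral projectors, which become asymptotically orthogonal (at the edge they correspond to different eigenvalues separated by $\gg\eta$), producing independent Gaussian limits. The eigenvalues themselves are functionals of $G$ on a larger scale $\eta\gg N^{-2/3}$ and are essentially uncorrelated with the small-scale resampling fluctuations, giving the joint convergence with the Airy$_1$ process. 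The main obstacle I anticipate is the third step: showing rigorously that the local resampling CLT holds \emph{simultaneously} on spectral scales at or below $N^{-2/3}$ (so that individual eigenvectors are visible) and that the contributions of the $k$ different edge eigenvalues decouple; this is strictly more delicate than the bulk or single-vector edge analyses, and will require combining the two-resolvent local law with sharp rigidity for the top $k$ eigenvalues to control the cross-terms in the CLT.
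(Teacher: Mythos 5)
Your high-level plan (resolvent analysis plus local resampling to get a Gaussian limit with tree covariance) is the same as the paper's, and your last step correctly identifies the real difficulty. But the specific mechanism you propose for converting resolvent convergence into eigenvector convergence differs from the paper's, and it is precisely this difference that creates the gap you flag at the end.

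You propose to work at random spectral parameters $z=\lambda_s+\ri\eta$ with $\eta$ chosen in the narrow window $N^{-1+\oo(1)}\ll\eta\ll N^{-2/3-\oo(1)}$, so that $N\eta\Im G(z)/\pi$ isolates the rank-one projector onto $\bmu_s$. Two issues arise. First, the local resampling CLT and the local laws of \cite{huang2024spectrum,huang2024ramanujan} are formulated at deterministic spectral parameters; you would need to prove a version uniform over $E$ in a microscopic window, and then evaluate at the (random, resolvent-dependent) point $\lambda_s$, with careful control of the interaction between the randomness of $\lambda_s$ and the resampling randomness. Second, to handle $k$ eigenvectors simultaneously you would need to show the $k$ slices $\Im G(\lambda_{s+1}+\ri\eta)$, $2\le s\le k+1$, decouple. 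You correctly flag this as the main obstacle — you would need a two-resolvent local law and sharp microscopic rigidity — but you do not resolve it, so as written there is a genuine gap in the proposal.

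The paper avoids both problems by never working below scale $N^{-2/3}$ and never isolating a single eigenvector. It fixes the \emph{deterministic} spectral parameter $z=2+w(\cA N)^{-2/3}$ with $w$ ranging over a compact subset of $\bC^+$, so $\eta\asymp N^{-2/3}$. At this scale one proves (\Cref{p:G_convergence}) that $\cA^{-2/3}N^{1/3}\Im G_{ij}(z)$ converges weakly, as a family of random harmonic functions of $w$, to $\Im\bfG_{ij}(w)=\sum_{s\ge1}\Im[w]Z_s(i)Z_s(j)/|A_s-w|^2$; the local resampling CLT is carried out once, at this single fixed scale, and the multi-index moment generating function computation \eqref{e:moment_gene2} automatically delivers both the Gaussian wave covariance in the spatial index and independence across the spectral index via the factor $\delta_{s_a s_b}$ — no decoupling of separate slices is ever needed. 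The joint eigenvalue/eigenvector convergence is then extracted by a purely topological step: \Cref{l:boundary_measure} says that locally uniform convergence of Poisson integrals forces vague convergence of the underlying Borel measures, and applied to the measures $\mu_N=\sum_s N\bmu_s(i)\bmu_s(j)\delta_{(\cA N)^{2/3}(\lambda_s-2)}$ this gives convergence of $((\cA N)^{2/3}(\lambda_{s+1}-2),\,N\bmu_{s+1}(i)\bmu_{s+1}(j))$ to $(A_s,\,Z_s(i)Z_s(j))$ simultaneously for all $s$. A Skorokhod coupling and the tightness estimates of \Cref{p:eig_convergence} make this rigorous. In short: the step you anticipate needing a two-resolvent argument and microscopic rigidity for is exactly what the harmonic-function topology lemma makes free; if you wish to salvage your approach, I would suggest replacing your ``isolation at small $\eta$'' step by an argument along the lines of \Cref{l:boundary_measure}.
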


\begin{remark}
The joint convergence of the extreme eigenvalues to the Airy$_1$ point process follows from the edge universality result in \cite{huang2024ramanujan}. The new contributions in \Cref{t:ev_universality} are twofold: firstly the rescaled edge eigenvectors converge to independent Gaussian waves; secondly the extreme eigenvalues and the corresponding eigenvectors are asymptotically independent. \end{remark}

\begin{remark}
Eigenvectors are not unique and are defined only up to a sign. That is why we multiply each eigenvector by a random sign in \Cref{t:ev_universality}. Another way to express the convergence to Gaussian waves is through the quadratic expressions $\bmu_s(i)\bmu_s(j)$ for $i,j\in \bS$ where $\bS$ is the vertex set of $\cB_r(o;\cG)$. Specifically,
\begin{align*}
((\cA N)^{2/3}(\lambda_{s+1}-2), \{N\bmu_{s+1} (i)\bmu_{s+1} (j)\}_{i,j\in \bS} )_{1\leq s\leq k}\Rightarrow (A_s, \{Z_{s}(i)Z_{s}(j)\}_{i,j\in \bS})_{1\leq s\leq k},
\end{align*}
where $A_1\geq A_2\geq A_3\geq \cdots$ is the Airy$_1$ point process, and $\{Z_{s}\}_{1\leq s\leq k}$ are independent copies of the Gaussian wave $\Psi$ (recall from \eqref{e:edge_Gaussian_wave}).

\end{remark}

\subsection{Topological statements}

Let $\lambda_1 = {d}/{\sqrt{d-1}} \geq \lambda_2 \geq \cdots \geq \lambda_N$ denote the eigenvalues, and $\bmu_1=\bm1/\sqrt{N}, \bmu_2, \cdots, \bmu_N$ the associated normalized eigenvectors of the normalized adjacency matrix $H$ of random $d$-regular graphs $\cG$ on $N$ vertices.
The rescaled eigenvalues, given by $((\cA N)^{2/3}(\lambda_{1}-2), (\cA N)^{2/3}(\lambda_{2}-2),(\cA N)^{2/3}(\lambda_{3}-2),\cdots)$ can be encoded by as a random measure (or random point process) on $\bR$:
\begin{align*}
\sum_{s=1}^N\delta_{(\cA N)^{2/3}(\lambda_{s}-2)}.
\end{align*}
We define the Green's function and the Stieltjes transform of the empirical eigenvalue distribution respectively as 
\begin{align*}
G(z)=(H-z)^{-1}, \quad m_N(z)=\frac{1}{N}\Tr[(H-z)^{-1}]=\frac{1}{N}\sum_{s=1}^N \frac{1}{\lambda_s-z}, \quad z\in \bC^+.
\end{align*} 
The imaginary parts of $G(z)$ and $m_N(z)$ are harmonic functions on the upper half-plane. By zooming in around the spectral edge at $2$, we have
\begin{align*}
\frac{N^{1/3}}{\cA^{2/3}}\Im\left[G_{ij}\left(2+\frac{w}{(\cA N)^{2/3}}\right)\right]
&=\Im \sum_{s=1}^N \frac{N\bmu_s(i)\bmu_s(j)}{ (\cA N)^{2/3}(\lambda_s-2)- w},\\
\frac{N^{1/3}}{\cA^{2/3}}\Im\left[m_N\left(2+\frac{w}{(\cA N)^{2/3}}\right)\right]
&=\Im \sum_{s=1}^N \frac{1}{ (\cA N)^{2/3}(\lambda_s-2)- w}.
\end{align*}
These expressions are Poisson kernel integrals with respect to the measures $\sum_{s= 1}^N N\bmu_s(i)\bmu_s(j)\delta_{(\cA N)^{2/3}(\lambda_{s}-2)}$ and $\sum_{s= 1}^N\delta_{(\cA N)^{2/3}(\lambda_{s}-2)}$. 

In the remainder of this section, we review the topologies of locally finite measures and harmonic functions on the upper half-plane. 

\begin{definition}\label{d:vague}
For any locally finite measures $\mu_1, \mu_2, \cdots$ on $\bR$, we say that they \emph{converge in the vague topology} to another locally finite $\mu$, if $\mu_n(f)\to \mu(f)$, for any $f:\bR\to\bR$ that is compactly supported and smooth.  
\end{definition}

The imaginary part of Green's functions and Stieltjes transforms are harmonic functions on the upper half-plane. We denote the set of harmonic functions on the upper half-plane as $\Har(\mathbb{C}^+)$, equipped with the topology of uniform convergence on compact subsets. This is a separable complete metric space, as every Cauchy sequence in this space has a limit given by a harmonic function.

Let $\{f_n\}_{n \geq 1}$ be a locally equibounded sequence of harmonic functions in $\Har(\mathbb{C}^+)$, then there exists a subsequence of $\{f_n\}_{n \geq 1}$ that converges locally uniformly. Thus, the compact sets of $\Har(\mathbb{C}^+)$ are given by:
\[
\{f \in \Har(\mathbb{C}^+) : \max_{x \in K} |f(x)| \leq M\},
\]
for any compact subset $K$ of $\mathbb{C}^+$ and any large constant $M > 0$.

Under mild conditions, the Poisson integral of a measure on $\bR$ gives a harmonic function in $\Har(\mathbb{C}^+)$. The following lemma states that the convergence of harmonic functions in the sense of uniform convergence on compact subsets implies the vague convergence of the corresponding measures. We postpone its proof to \Cref{app:Green}.

\begin{lemma}\label{l:boundary_measure}
Let $\mu$ and $\{\mu_n\}_{n\geq 1}$ be a sequence of $\sigma$ finite Borel measure on $\bR$, such that
\begin{align*}
\max_{n\geq 1} \int_\bR \frac{\rd|\mu_n|(x)}{1+x^2}<\infty, \quad \int_\bR \frac{\rd|\mu|(x)}{1+x^2}<\infty.
\end{align*}
The following Poisson kernel integrals are well defined,
\begin{align*}
u_n(x+\ri y)= \frac{1}{\pi}\int_\bR \frac{y \rd \mu_n(s)}{(x-s)^2+y^2},\quad u(x+\ri y)= \frac{1}{\pi}\int_\bR \frac{y \rd \mu(s)}{(x-s)^2+y^2}.
\end{align*}
If $u_n$ converges to $u$ in the sense of uniform convergence on compact subsets, then $\mu_n$ converges to $\mu$ in vague topology. 
\end{lemma}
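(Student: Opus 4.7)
The plan is to test vague convergence on an arbitrary $f \in C_c^\infty(\bR)$ and link $\mu_n(f)$ to the harmonic functions $u_n$ by Poisson--Fubini duality. With $P_y(x) \deq y/(\pi(x^2 + y^2))$, Fubini gives, for every $y > 0$,
\[
\int_\bR u_n(x+\ri y)\, f(x)\, \rd x = \int_\bR (P_y * f)(s)\, \rd \mu_n(s),
\]
and the same identity holds for $(u, \mu)$. Justification of Fubini uses $|f| \leq \|f\|_\infty \mathbf{1}_{[-R,R]}$ together with $\int \rd|\mu_n|/(1+t^2) < \infty$. The strategy has two parts: first choose $y > 0$ small so that $P_y * f$ is close to $f$ against both $|\mu_n|$ (uniformly in $n$) and $|\mu|$; then for this fixed $y$ exploit the uniform convergence $u_n \to u$ on the compact slice $\{x + \ri y : x \in \supp f\} \subset \bC^+$.

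The technical ingredient is the uniform tail control of $P_y*f - f$. Since $f \in C_c^\infty(\bR)$ with $\supp f \subset [-R, R]$, one has $\|P_y*f\|_\infty \leq \|f\|_\infty$ and $(P_y * f)(s) \to f(s)$ uniformly on any compact subset of $\bR$ as $y \to 0$ (standard mollification). Moreover, for $|s| > 2R$ one has $|x - s| \geq |s|/2$ on $\supp f$, which yields the decay estimate $|(P_y * f)(s)| \leq C_f \, y/(1+s^2)$. Combined with the standing hypothesis $\sup_n \int \rd|\mu_n|(s)/(1+s^2) < \infty$ and the analogous bound for $\mu$, splitting the integration at a large truncation $|s| = M$ shows that for any $\varepsilon > 0$ one can pick $y > 0$ so small that
\[
\sup_n \left| \int (P_y*f - f)\, \rd\mu_n \right| < \varepsilon, \qquad \left| \int (P_y*f - f)\, \rd\mu \right| < \varepsilon.
\]

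With this $y$ fixed, $\{x + \ri y : x \in \supp f\}$ is a compact subset of $\bC^+$, so by hypothesis $u_n(x+\ri y) \to u(x+\ri y)$ uniformly for $x \in \supp f$; integrating against the bounded, compactly supported $f(x)$ gives $\int (P_y*f)\, \rd\mu_n \to \int (P_y*f)\, \rd\mu$. Chaining with the two displayed inequalities yields $\limsup_n |\mu_n(f) - \mu(f)| \leq 2\varepsilon$, and letting $\varepsilon \to 0$ completes the proof of vague convergence.

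The main obstacle is the uniform-in-$n$ closeness in the displayed inequalities: without the $1/(1+x^2)$-integrability, the mass of $\mu_n$ far from $\supp f$ could conspire with the slow $O(y/s^2)$ decay of $P_y * f$ at infinity to prevent $P_y * f$ from approximating $f$ uniformly in $n$. The given hypothesis is precisely what makes both the Poisson integrals well-defined and this tail bound effective; everything else is routine.
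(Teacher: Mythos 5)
Your proof is correct and takes essentially the same route as the paper's: apply Fubini to rewrite $\int u_n(x+\ri y) f(x)\,\rd x = \int (P_y * f)\,\rd\mu_n$, control $P_y*f - f$ by a near/far split using the decay $|(P_y*f)(s)| \lesssim_f y/(1+s^2)$ outside $\supp f$ together with the uniform $1/(1+s^2)$-integrability of $|\mu_n|$, and then send $n\to\infty$ (using locally uniform convergence of $u_n$) before sending $y\to 0$. The only cosmetic difference is that the paper records a quantitative rate $y\ln(M/y)+y/M$ for the near-range error rather than invoking the qualitative approximate-identity statement $\|P_y*f - f\|_{L^\infty} \to 0$; the logic is the same.
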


Throughout the remainder of this paper, the weak convergence of point processes is understood to mean convergence with respect to the vague topology, as defined in \Cref{d:vague}. Similarly, the weak convergence of harmonic functions on the upper half-plane refers to convergence with respect to the topology of uniform convergence on compact subsets.

\subsection{Proof ideas}

In this section, we present our new framework, which establishes the convergence of edge eigenvectors through the weak convergence of the imaginary part of the Green's function. We illustrate this framework using Wigner matrices. In the main body of the paper, we apply the same framework to prove the convergence of edge eigenvectors of random regular graphs to the Gaussian wave.

Let $W$ be an $N\times N$ real symmetric Wigner matrix. Specifically, the entries $W_{ij}=W_{ji}$ are real-valued, have mean zero and unit variance $\bE[W_{ij}]=0, \bE[W_{ij}^2]=1$, and possess bounded higher moments.  We denote the eigenvalues and corresponding normalized eigenvectors of $H:=N^{-1/2}W$ as 
$\lambda_1\geq \lambda_2\geq\cdots\geq \lambda_N$ and $\bmu_1, \bmu_2, \cdots, \bmu_N$ respectively.  Fix any finite index set $\bS\subset \qq{N}$. It was shown in \cite{knowles2013eigenvector} that as $N$ goes to infinity
\begin{align}\label{e:eiev_convergence}
(N^{2/3}(\lambda_s-2), \{N\bmu_s (i)\bmu_s (j)\}_{i,j\in \bS} )_{1\leq s\leq k}\Rightarrow (A_s, \{Z_{s}(i)Z_{s}(j)\}_{i,j\in \bS})_{1\leq s\leq k},
\end{align}
where $A_1\geq A_2\geq A_3\geq \cdots$ is the Airy$_1$ point process, and $\{Z_{s}(i)\}_{1\leq s\leq k, i\in \bS}$ are independent standard Gaussian variables $\cN(0,1)$. 
In what follows, we present a new proof of this result by leveraging the weak convergence of the imaginary part of the Green's function.

 We denote the Green's function of $H$ as $G(z)=(H-z)^{-1}$.
 The claim \eqref{e:eiev_convergence} follows from the following statement about the joint convergence of extreme eigenvalues, and the imaginary part of Green's functions at the edge. 
By the edge universality of Wigner matrices \cite{erdHos2012rigidity, soshnikov1999universality,tao2010random,knowles2013eigenvector, lee2014necessary}, the normalized extreme eigenvalues converge in weak topology to the Airy$_1$ point process
\begin{align}\label{e:edge_eig}
(N^{2/3} ( \lambda_{1} - 2 ), N^{2/3} ( \lambda_{2} - 2 ),\cdots, N^{2/3} ( \lambda_{k} - 2 ))\Rightarrow (A_1, A_2, A_3,\cdots, A_k).
\end{align}

For any $i,j\in \bS$, near the spectral edge $2$,  the imaginary part of the rescaled Green's functions $N^{1/3}\Im G_{ij}(2+wN^{-2/3})$ is explicitly given by 
\begin{align*}
N^{1/3}\Im G_{ij}(2+wN^{-2/3})=\Im \sum_{s=1}^N \frac{N\bmu_s(i)\bmu_s(j)}{ N^{2/3}(\lambda_s-2)- w}=\sum_{s=1}^N \frac{\Im[w]N\bmu_s(i)\bmu_s(j)}{ (N^{2/3}(\lambda_s-2)- \Re[w])^2+\Im[w]^2}.
\end{align*}
It is the Possion kernel integral with the measure 
\begin{align*}
\sum_{s=1}^N N\bmu_s(i)\bmu_s(j) \delta_{N^{2/3}(\lambda_s-2)},
\end{align*}
and harmonic for $w\in \bC^+$. The claim \eqref{e:eiev_convergence} follows from \Cref{l:boundary_measure}, and the following weak convergence of the imaginary part of the rescaled Green's functions at the edge
\begin{align}\label{e:edgeG_converge}
\{N^{1/3}(\Im[G_{ij}(2+wN^{-2/3})]\}_{i,j\in \bS}\Rightarrow \{\Im[\bfG_{ij}(w)]\}_{i,j\in \bS}, 
\end{align}
where 
\begin{align*}
\Im[\bfG_{ij}(w)]:=\sum_{s\geq 1}\frac{\Im[w]Z_s(i)Z_s(j)}{(A_s -\Re[w])^2+\Im[w]^2}, \quad i,j\in \bS,
\end{align*}
and $\{Z_s(i)\}_{i\in\bS, s\geq 1}$ are independent Gaussian random variables. 

In the following, we give a sketch for the proof of \eqref{e:edgeG_converge}. We recall the semicircle distribution $\varrho_{\rm sc}(x)$ and its Stieltjes transform $\msc(z)$:
\begin{align}\begin{split}\label{e:msc_equation}
 \varrho_{\rm sc}(x)=\bm1_{x\in[-2,2]}\frac{\sqrt{4-x^2}}{2\pi},
 \quad 
 \msc(z)=\int_\bR \frac{\varrho_{\rm sc}(x)\rd x}{x-z}=\frac{-z+\sqrt{z^2-4}}{2}.
\end{split}\end{align}

  Let $\wt H$ be another $N\times N$ Wigner matrix, obtained by resampling the rows and columns of $H$ indexed by $\bS$. Specifically, the rows and columns indexed by $\bS$ are replaced with $\{\widetilde H_{ix}\}_{i\in \bS, x\in \qq{N}}$, which is an independent copy of $\{H_{ix}\}_{i\in \bS, x\in \qq{N}}$. We denote the Green's function of $\wt H$ by $\widetilde G(z)$. By construction $H$ and $\widetilde H$ have the same law, and consequently, $ G(z)$ and $\widetilde G(z)$ also share the same law.
Let $H^{(\bS)}$ and $\wt H^{(\bS)}$ denote the matrices obtained from $H$ and $\wt H$ by removing the rows and columns indexed by $\bS$. By definition, $H^{(\bS)}=\wt H^{(\bS)}$, and their Green's functions satisfy $G^{(\bS)}(z)=\wt G^{(\bS)}(z)$.

Abbreviate $\mathbb S^\complement\deq \qq{N}\backslash\mathbb S$. Let $z=2+N^{-2/3}w=2+\OO(N^{-2/3})$, we can study the edge scaling limit of $\widetilde G|_{\bS}(z)$ using the Schur complement formula \eqref{e:Schur1} 
\begin{align}\label{e:HSchur}
\wt G|_{\bS}(z)
=\frac{1}{\wt H|_\bS-z-\wt H_{\bS \bS^\complement} G^{(\bS)}(z) \wt H_{\bS^\complement \bS}}
=\frac{1}{-2-\msc(2) -\cD(z)} =\frac{1}{1/\msc(2)  -\cD(z)} ,
\end{align}
where we used that $\msc^2(z)+z\msc(z)+1=0$ and for $i,j\in \bS$, 
\begin{align}\label{e:defD}
\cD_{ij}(z)=(z-2)\delta_{ij}-\wt H_{ij}+\sum_{x,y\not\in \bS}\wt H_{ix} G^{(\bS)}_{xy}(z) \wt H_{yj}-\msc(2)\delta_{ij}.
\end{align}

To establish the weak convergence of the Green's functions $\wt G|_{\bS}(z)$, we rely on the following estimates, which are straightforward consequences of optimal rigidity for Wigner matrices (see \cite{erdHos2017dynamical,erdHos2012rigidity}). For the purpose of this discussion, we assume these results as given.
For any arbitrarily small $\fo>0$ and large $\fC>0$, the following bounds hold with probability at least $1-N^{-\fC}$ provided $N$ is large enough,
\begin{align}\label{e:input}
|\cD_{ij}(z)|\lesssim  N^{-1/3+\fo}, \text{ for } i,j\in \bS, \quad \Im[G_{xx}(z)]\lesssim N^{-1/3+\fo},\text{ for } x\in \qq{N}.
\end{align}

 For the entry-wise imaginary part of a matrix, we have the following identity
    \begin{align}\label{e:Ainverse}
        \Im[A^{-1}]= -A^{-1}\Im[A] \overline{A^{-1}},
    \end{align}
    where for any matrix $B$, $\Im[B]$ denotes the entry-wise imaginary part of $B$, and $\overline{B}$ denotes the entry-wise complex conjugate of $B$.    Thus, by taking imaginary part of both sides of \eqref{e:HSchur}, we get
\begin{align}\label{e:G-msc}\begin{split}
\Im [ \wt G|_{\bS}(z) ]
&= - (1/\msc(2) -\cD(z))^{-1}\Im[1/\msc(2) -\cD(z)] \overline{(1/\msc(2) -\cD(z))^{-1}} \\
&= (1/\msc(2) -\cD(z))^{-1}\Im[\cD(z)]  \overline{(1/\msc(2) -\cD(z))^{-1}} \\
&=(\msc(2)+\OO(|\cD(z)|))\Im[\cD(z)](\msc(2)+\OO(|\cD(z)|))=\Im[\cD(z)]  +\OO(N^{-2/3+2\fo})
	\end{split}
\end{align}
with overwhelmingly high probability. Here in the second statement we used that $\msc(2)=-1$ is real; in the last two statements we used Taylor expansion and \eqref{e:input}.
By rearranging, we conclude that

For the imaginary of $\cD(z)$ (recall from \eqref{e:defD}), we can rewrite it as
\begin{align}\label{e:Greplace_error1}
\sum_{x,y\not\in \bS}\wt H_{ix} G^{(\bS)}_{xy}(z) \wt H_{yj}=\sum_{x,y\not\in \bS}\wt H_{ix}  G_{xy}(z)\wt H_{yj}
+\sum_{x,y\not\in \bS}\wt H_{ix} (G^{(\bS)}(z)- G(z))_{xy} \wt H_{yj}.
\end{align}
It turns out the second term on the RHS of \eqref{e:Greplace_error1} is small. In fact, by the Schur complement formula we have
\begin{align}\label{e:schur_copy}
G^{(\bS)}
=  G|_{\bS^\complement}- G|_{\bS^\complement\bS}( G|_{\bS})^{-1} G|_{\bS\bS^\complement}.
\end{align}
We remark that the RHS of the above Schur complement formula also extends to $0$ for the rows and columns indexed by $\bS$, i.e. for $i\in \bS$ and $j\in \qq{N}$,
\begin{align*}
 G_{ij}-\sum_{x,y\in \bS} G_{ix}( G|_{\bS})_{xy}^{-1} G_{yj}
= G_{ij}-\sum_{y\in \bS}\delta_{iy} G_{yj}=0.
\end{align*}

By plugging \eqref{e:schur_copy} into the second term of \eqref{e:Greplace_error1}, we conclude
\begin{align}\label{e:Greplace_error2}
\sum_{x,y\not\in \bS}\wt H_{ix} (G^{(\bS)}(z)-G(z))_{xy}\wt H_{yj}
=\sum_{x,y\in \qq{N}}\wt H_{ix} (G(z) ( G|_\bS(z))^{-1} G(z))_{xy}\wt  H_{yj}
\lesssim N^{-2/3+2\fo},
\end{align}
where we used that $\wt H_{ix}$ and $G(z)$ are independent for any $x\in \qq{N}$. For any large $\fC>0$, the following bounds hold with probability at least $1-N^{-\fC}$ provided $N$ is large enough,
\begin{align*}
\sum_{x\in \qq{N}}\wt H_{ix} G_{xk}(z)\lesssim N^{\fo/2} \sqrt{\frac{\sum_{x\in \qq{N}}|G_{xk}^2(z)|}{N}}
=N^{\fo/2} \sqrt{\frac{\Im[G_{kk}(z)]}{N\Im[z] }}\lesssim N^{-1/3+\fo},
\end{align*}
where in the first statement we used concentration of measure; in the second statement we used the Ward identity \eqref{e:Ward}; in the last statement we used \eqref{e:input}.
Combining \eqref{e:G-msc}, \eqref{e:Greplace_error1} and \eqref{e:Greplace_error2}
\begin{align*}
\Im [\wt G|_{\bS}(z)]=(\wt H\Im[ G(z)] \wt H)|_\bS  +\OO(N^{-2/3+2\fo}+\Im[z]).
\end{align*}
In particular for any $i,j\in \bS$, and $z=2+N^{-2/3}w$,
\begin{align}\begin{split}\label{e:prelimit}
N^{1/3}\Im[G_{ij}(z)]
&=N^{1/3}\Im[(\wt HG(z) \wt H)_{ij}]+\OO(N^{-2/3+2\fo})\\
&=N^{1/3}\Im \sum_{s=1}^N\frac{\langle \bmu_s, \wt H_{\cdot i}\rangle\langle \bmu_s, \wt H_{\cdot j}\rangle}{\lambda_s -z}+\OO(N^{-1/3+2\fo})\\
&=\Im \sum_{s=1}^N\frac{N\langle \bmu_s, \wt H_{\cdot i}\rangle\langle \bmu_s, \wt H_{\cdot j}\rangle}{N^{2/3}(\lambda_s-2) -N^{2/3}(z-2)}+\OO(N^{-1/3+2\fo})\\
&=\sum_{s=1}^N \frac{\Im[w]N\langle \bmu_s, \wt H_{\cdot i}\rangle\langle \bmu_s, \wt H_{\cdot j}\rangle}{ (N^{2/3}(\lambda_s-2)- \Re[w])^2+\Im[w]^2}+\OO(N^{-1/3+2\fo}).
\end{split}\end{align}
We notice that $\wt H_{\cdot i}$ and the eigenvectors $\bmu_s$ are independent. Together with the delocalization of eigenvectors, it follows that $\sqrt N\langle \bmu_s, \wt H_{\cdot i}\rangle$ weakly converges to independent Gaussian random variables $Z_s(i)$.
The claim \eqref{e:edgeG_converge} then follows from \eqref{e:prelimit}, the edge universality \eqref{e:edge_eig}, and certain estimates on the locations eigenvalues (see \Cref{p:eig_concentration}). We refer to \Cref{sec:schurlemmas} for a detailed discussion in the setting of random $d$-regular graphs.

\subsection{Outline of the Paper}

In \Cref{s:preliminary}, we review key results from \cite{huang2024spectrum} on local resampling and the estimation of the Green's function for random $d$-regular graphs. We also recall the optimal eigenvalue rigidity and edge universality results from \cite{huang2024ramanujan}. Our main result, \Cref{t:ev_universality}, is established in \Cref{s:Gaussianwave}, while the proofs of certain eigenvalue estimates and the tightness of the Stieltjes transform are deferred to \Cref{s:edge_tightness}.

\subsection{Notation}
We reserve letters in mathfrak mode, e.g. $\fb,  \fc,\fo,\cdots$, to represent universal constants, and $\fC$ for a large universal constant, which
may be different from line by line. We use letters in mathbb mode, e.g. $ \bT, \mathbb X$, to represent set of vertices. 
Given a graph $\cH$, we denote the graph distance as $\dist_\cH(\cdot, \cdot)$, and the radius-$r$ neighborhood of a vertex $i\in \cH$ as $\cB_r(i,\cH)$. Given a vertex set $\mathbb X$ of $\cH$, let $\cH^{(\bX)}$ denote the graph obtained from 
$\cH$ by removing the vertices in $\bX$. Given any matrix $A$ and index sets $\bX, \mathbb Y$, let $A^{(\bX)}$ denote the matrix obtained from $A$ by removing the rows and columns indexed by $\bX$. The restriction of 
$A$ to the submatrices corresponding to $\bX\times \bX, \bX\times \mathbb Y$ are denoted as $A_\bX, A_{\bX \mathbb Y}$ respectively. For any matrix $A$, $\Im[A]$ denotes the entry-wise imaginary part of $A$, and $\overline{A}$ denotes the entry-wise complex conjugate of $A$.  

For two quantities $X_N$ and $Y_N$ depending on $N$, 
we write that $X_N = \OO(Y_N )$ or $X_N\lesssim Y_N$ if there exists some universal constant such
that $|X_N| \leq \fC Y_N$ . We write $X_N = \oo(Y_N )$, if the ratio $|X_N|/Y_N\rightarrow 0$ as $N$ goes to infinity. We write
$X_N\asymp Y_N$ if there exists a universal constant $\fC>0$ such that $ Y_N/\fC \leq |X_N| \leq  \fC Y_N$. We remark that the implicit constants may depend on $d$. 
With a slight abuse of notation, we write $\fa\ll \fb$ to indicate that $\fa/\fb\leq 0.01$. 
We denote $\qq{a,b} = [a,b]\cap\bZ$ and $\qq{N} = \qq{1,N}$. We say an event $\Omega$ holds with high probability if $\bP(\Omega)\geq 1-\oo(1)$. We say an event $\Omega$ holds with overwhelmingly high probability, if for any $\fC>0$, 
$\bP(\Omega)\geq 1-N^{-\fC}$ holds provided $N$ is large enough. 
Given an event $\Omega$, we say conditioned on $\Omega$, an event $\Omega'$ holds with overwhelmingly high probability, if for any $\fC>0$, 
$\bP(\Omega'|\Omega)\geq 1-N^{-\fC}$ holds provided $N$ is large enough.

 \subsection*{Acknowledgements.}
 The research of Y.H. is supported by National Key R\&D Program of China No.\,2023YFA1010400,  NSFC No.\,12322121 and Hong Kong RGC Grant No.\,21300223. The research of J.H. is supported by NSF grants DMS-2331096 and DMS-2337795,  and the Sloan Research Award. 
The research of H-T.Y. is supported by NSF grants DMS-1855509 and DMS-2153335. J.H. wants to thank Charles Bordenave for helpful discussions on Gaussian waves.

\section{Preliminaries}\label{s:preliminary}

In this paper we fix the parameters as follows
\begin{align}\label{e:parameters}
0<\fo\ll \ell/\log_{d-1}N\ll \fb\ll  \fc\ll \fg\ll 1,
\end{align} 
and set $\fR=(\fc/4)\log_{d-1}N$.
Below, we describe their meanings and where they are introduced:
\begin{itemize}
    \item $\fo$ arises from the delocalization of eigenvectors \eqref{e:delocalization}. Many estimates involve bounds containing $N^\fo$ factors, which are harmless.
    \item $\fb$ relates to the concentration of Green's function entries, with errors bounded by $N^{-\fb}$, see \eqref{eq:infbound0}.
    \item For the spectral parameter $z\in \bC^+$ in Green's functions and Stieltjes transforms, we restrict it to $\Im[z]\geq N^{-1+\fg}$ and $|z-2|\leq N^{-\fg}$.
    \item $\ell$ comes from local resampling in \Cref{s:local_resampling}, we resample  boundary edges of balls with radius $\ell$.
    \item $\fc$ defines $\fR$, and with high probability, random $d$-regular graphs are tree-like within radius $\fR$ neighborhoods, see \Cref{def:omegabar}.
\end{itemize}

\subsection{Kesten-Mckay Distribution}

By local weak convergence, the empirical eigenvalue density of random $d$-regular graphs converges to that of the infinite $d$-regular tree, which is known as the Kesten-McKay distribution; see \cite{kesten1959symmetric,mckay1981expected}. This density is given by 
\begin{align}\label{e:KMdistribution}
\varrho_d(x):=\mathbf1_{x\in [-2,2]} \left(1+\frac1{d-1}-\frac {x^2}d\right)^{-1}\frac{\sqrt{4-x^2}}{2\pi}.
\end{align}
We denote by $\md(z)$ the Stieltjes transform of the Kesten--McKay distribution $\varrho_d(x)$,
\begin{align*}
    \md(z)=\int_\bR \frac{\varrho_d(x)\rd x}{x-z},\quad z\in \bC^+:=\{w\in \bC \col \Im[w]>0\}.
\end{align*}
Explicitly, the Stieltjes transform of the Kesten--McKay distribution $\md(z)$ can be expressed in terms of the Stieltjes transform $\msc(z)$:
\begin{align}\label{e:md_equation}
    \md(z)=\frac{1}{-z-\frac{d}{d-1}\msc(z)},\quad \md(z)=(d-1)\frac{-(d-2)z+ d\sqrt{z^2-4}}{2(d^2-(d-1)z^2)}.
\end{align}
Note that close to the spectral edge $2$, the Stieltjes transforms $\msc(z)$ and $\md(z)$ have square root behavior: let $\cA={d(d-1)}/{(d-2)^2}$ as $z\rightarrow 2$,
\begin{align}\label{e:mscmd}
\msc(z)=-1+\sqrt{z-2}+\OO(|z-2|), \quad \md(z)=-\frac{d-1}{d-2}+\cA\sqrt{z-2}+\OO(|z-2|).
\end{align}
Let $\eta=\Im[z]$ and $\kappa=||\Re[z]|-2|$, then the imaginary part of $\msc(z)$ and $\md(z)$ satisfies (see \cite[Lemma 6.2]{erdHos2017dynamical}), 
\begin{align}\label{e:imm_behavior}
\Im[\msc(z)]\asymp \Im[\md(z)]\asymp \left\{
\begin{array}{cc}
\sqrt{\kappa+\eta}& \text{ if } |\Re[z]|\leq 2,\\
\eta/\sqrt{\kappa+\eta}&\text{ if }|\Re[z]|\geq 2. 
\end{array}
\right.
\end{align}

\subsection{Local Resampling}  
\label{s:local_resampling}

In this section, we recall the local resampling and its properties. This gives us the framework to talk about resampling from the random regular graph distribution as a way to get an improvement in our estimates of the Green's function.

For any graph $\cG$, we denote the set of unoriented edges by $E(\cG)$,
and the set of oriented edges by $\vec{E}(\cG):=\{(u,v),(v,u):\{u,v\}\in E(\cG)\}$.
For a subset $\vec{S}\subset \vec{E}(\cG)$, we denote by $S$ the set of corresponding non-oriented edges.
For a subset $S\subset E(\cG)$ of edges we denote by $[S] \subset \qq{N}$ the set of vertices incident to any edge in $S$.
Moreover, for a subset $\bV\subset\qq{N}$ of vertices, we define $E(\cG)|_{\bV}$ to be the subgraph of $\cG$ induced by $\bV$.

\begin{definition}
A (simple) switching is encoded by two oriented edges $\vec S=\{(v_1, v_2), (v_3, v_4)\} \subset \vec{E}$.
We assume that the two edges are disjoint, i.e.\ that $|\{v_1,v_2,v_3,v_4\}|=4$.
Then the switching consists of
replacing the edges $\{v_1,v_2\}, \{v_3,v_4\}$ with the edges $\{v_1,v_4\},\{v_2,v_3\}$.
We denote the graph after the switching $\vec S$ by $T_{\vec S}(\cG)$,
and the new edges $\vec S' = \{(v_1,v_4), (v_2,v_3)\}$ by
$
  T(\vec S) = \vec S'
$.
\end{definition}

The local resampling involves a fixed center vertex, which we now assume to be vertex $o$,
and a radius $\ell$.
Given a $d$-regular graph $\cG$, we write $\cT\deq\cB_{\ell}(o,\cG)$ to denote the radius-$\ell$ neighborhood of $o$ (which may not necessarily be a tree) and write $\bT$ for its vertex set.\index{$\cT, \bT$}  In addition, for any vertex set $\bX\subset \qq{N}$, and integer $r\geq 1$, we denote
	$\cB_r(\bX,\cG)=\{i\in \qq{N}: \dist_\cG(i, \bX)\leq r\}$ the ball of radius-$r$ around vertices $\bX$ in $\GG$.
The edge boundary $\del_E \cT$ of $\cT$ consists of the edges in $\cG$ with one vertex in $\bT$ and the other vertex in $\qq{N}\setminus\bT$.
We enumerate the edges of $\del_E \cT$ as $ \del_E \cT = \{e_1,e_2,\dots, e_\mu\}$, where $e_\al=\{l_\al, a_\al\}$ with $l_\al \in \bT$ and $a_\al \in \qq{N} \setminus \bT$. We orient the edges $e_\al$ by defining $\vec{e}_\al=(l_\al, a_\al)$.
We notice that $\mu$ and the edges $e_1,e_2, \dots, e_\mu$ depend on $\cG$. The edges $e_\al$ are distinct, but
the vertices $a_\al$ are not necessarily distinct and neither are the vertices $l_\al$. Our local resampling switches the edge boundary of $\cT$ with randomly chosen edges in $\cG^{(\bT)}$
if the switching is admissible (see \eqref{Wdef} below), and leaves them in place otherwise.
To perform our local resampling, we choose $(b_1,c_1), \dots, (b_\mu,c_\mu)$ to be independent, uniformly chosen oriented edges from the graph $\cG^{(\bT)}$, i.e.,
the oriented edges of $\cG$ that are not incident to $\bT$,
and define 
\begin{equation}\label{e:defSa}
  \vec{S}_\al= \{\vec{e}_\al, (b_\al,c_\al)\},
  \qquad
  {\bf S}=(\vec S_1, \vec S_2,\dots, \vec S_\mu).
\end{equation}
The sets $\bf S$ will be called the \emph{resampling data} for $\cG$. We remark that repetitions are allowed in the data $(b_1, c_1), (b_2, c_2),\cdots, (b_\mu, c_\mu)$.
We define an indicator that will be crucial to the definition of the local resampling.

\begin{definition}
For $\al\in\qq{\mu}$,
we define the indicator functions
$I_\al \equiv I_\al(\cG,{\bf S})=1$\index{$I_\alpha$} 
\begin{enumerate}
\item
 the subgraph $\cB_{\fR/4}(\{a_\al, b_\al, c_\al\}, \cG^{(\bT)})$ after adding the edge $\{a_\al, b_\al\}$ is a tree;
\item 
and $\dist_{\cG^{(\bT)}}(\{a_\al,b_\al,c_\al\}, \{a_\beta,b_\beta,c_\beta\})> {\fR/4}$ for all $\beta\in \qq{\mu}\setminus \{\al\}$.
\end{enumerate}
\end{definition}
 The indicator function $I_\alpha$ imposes two conditions. The first one is a ``tree" condition, which ensures that 
 $a_\al$ and $\{b_\al, c_\al\}$ are far away from each other, and their neighborhoods are trees. 
The second one imposes an ``isolation" condition, which ensures that we only perform simple switching when the switching pair is far away from other switching pairs. In this way, we do not need to keep track of the interaction between different simple switchings. 

We define the \emph{admissible set}
\begin{align}\label{Wdef}
{\mathsf W}_{\bf S}:=\{\al\in \qq{\mu}: I_\al(\cG,{\bf S}) \}.
\end{align}
We say that the index $\al \in \qq{\mu}$ is \emph{switchable} if $\al\in {\mathsf W}_{\bf S}$. We denote the set $\bW_{\bf S}=\{b_\al:\al\in {\mathsf W}_{\bf S}\}$\index{$\bW_{\bf S}$}. Let $\nu:=|{\mathsf W}_{\bf S}|$ be the number of admissible switchings and $\al_1,\al_2,\dots, \al_{\nu}$
be an arbitrary enumeration of ${\mathsf W}_{\bf S}$.
Then we define the switched graph by
\begin{equation} \label{e:Tdef1}
T_{\bf S}(\cG) := \left(T_{\vec S_{\al_1}}\circ \cdots \circ T_{\vec S_{\al_\nu}}\right)(\cG),
\end{equation}
and the resampling data by
\begin{equation} \label{e:Tdef2}
  T({\bf S}) := (T_1(\vec{S}_1), \dots, T_\mu(\vec{S}_\mu)),
  \quad
  T_\al(\vec{S}_\al) \deq
  \begin{cases}
    T(\vec{S}_\al) & (\al \in {\mathsf W}_{\bf S}),\\
    \vec{S}_\al & (\al \not\in {\mathsf W}_{\bf S}).
  \end{cases}
\end{equation}

To make the structure more clear, we introduce an enlarged probability space.
Equivalent to the definition above, the sets $\vec{S}_\al$ as defined in \eqref{e:defSa} are uniformly distributed over 
\begin{align*}
{\sf S}_{\al}(\cG)=\{\vec S\subset \vec{E}: \vec S=\{\vec e_\al, \vec e\}, \text{$\vec{e}$ is not incident to $\cT$}\},
\end{align*}
i.e., the set of pairs of oriented edges in $\vec{E}$ containing $\vec{e}_\al$ and another oriented edge in $\cG^{(\bT)}$.
Therefore ${\bf S}=(\vec S_1,\vec S_2,\dots, \vec S_\mu)$ is uniformly distributed over the set
${\sf S}(\cG)=\sf S_1(\cG)\times \cdots \times \sf S_\mu(\cG)$.

We introduce the following notation on the probability and expectation with respect to the randomness of the $\bfS\in \sf S(\cG)$.
\begin{definition}\label{def:PS}
    Given any $d$-regular graph $\cG$, we 
    denote $\bP_\bfS(\cdot)$ the uniform probability measure on ${\sf S}(\cG)$;
 and $\bE_\bfS[\cdot]$ the expectation  over the choice of $\bfS$ according to $\bP_\bfS$. 
\end{definition}

 The following claim from \cite[Lemma 7.3]{huang2024spectrum} states that local resampling is invariant under the random regular graph distribution.

\begin{lemma}[{\cite[Lemma 7.3]{huang2024spectrum}}] \label{lem:exchangeablepair}
Fix $d\geq 3$. We recall the operator $T_\bfS$ from \eqref{e:Tdef1}. Let $\cG$ be a random $d$-regular graph  and $\bfS$ uniformly distributed over $\sfS(\cG)$, then the graph pair $(\cG, T_{\bf S}(\cG))$ forms an exchangeable pair:
\begin{align*}
(\cG, T_{\bf S}(\cG))\stackrel{law}{=}(T_{\bf S}(\cG), \cG).
\end{align*}
\end{lemma}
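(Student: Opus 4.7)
The plan is to prove the exchangeability by constructing an explicit measure-preserving involution on the augmented space of (graph, resampling data) pairs, and then reading off the desired law of $(\cG, T_\bfS(\cG))$ as a marginal.

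First I would set up the augmented probability space. Let $\Omega_{N,d}$ be the set of all simple $d$-regular graphs on $\qq{N}$, and consider pairs $(\cG, \bfS)$ with $\cG\in \Omega_{N,d}$ and $\bfS\in \sfS(\cG)$. The assumed joint law is uniform on $\cG$ and then $\bP_\bfS$-uniform on $\sfS(\cG)$. The key observation is that the cardinality $|\sfS(\cG)|$ depends only on the radius-$\ell$ ball data through $\mu=\mu(\cG)$, and in fact once we restrict to the exchangeable-pair argument we can use the bijectivity of the map rather than counting.

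Next I would define the candidate involution
\begin{equation*}
\Phi:(\cG,\bfS)\longmapsto (T_\bfS(\cG), T(\bfS)),
\end{equation*}
and verify three properties: (i) $T_\bfS(\cG)\in\Omega_{N,d}$ and the local ball $\cB_\ell(o,T_\bfS(\cG))$ equals $\cT$ (so that the numbering $\vec e_1,\dots,\vec e_\mu$ of boundary edges of $\cT$ is the same in both graphs, and hence $T(\bfS)\in \sfS(T_\bfS(\cG))$); (ii) the admissible set is symmetric, i.e.\ $\mathsf W_{T(\bfS)}(T_\bfS(\cG))=\mathsf W_\bfS(\cG)$; and (iii) $\Phi\circ\Phi=\mathrm{id}$. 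The crux of (i) is that all switchings performed lie outside $\bT$ on the $(a_\alpha,b_\alpha,c_\alpha)$ side, so the neighborhood $\cT$ of $o$, and with it the enumeration of $\del_E\cT$ and the labels $l_\alpha$, are preserved. The crux of (ii) is that the indicator $I_\alpha$ only depends on the radius-$\fR/4$ neighborhoods of $\{a_\alpha,b_\alpha,c_\alpha\}$ in $\cG^{(\bT)}$; because the simple switchings associated with admissible $\alpha$'s are mutually isolated at scale $\fR/4$, performing one admissible switching does not affect the tree and isolation conditions for any other index, and the switching itself exchanges the roles of $(a_\alpha,b_\alpha)$ with $(l_\alpha,c_\alpha)$ in a way that leaves the relevant local structure symmetric. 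Property (iii) then follows because each individual switching $T_{\vec S_\alpha}$ is its own inverse on the edge level, and the admissible set is unchanged under $\Phi$ by (ii), so applying $\Phi$ twice re-switches exactly the same set of indices and restores both $\cG$ and $\bfS$.

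With $\Phi$ an involution on the (finite) set $\{(\cG,\bfS):\cG\in\Omega_{N,d},\,\bfS\in\sfS(\cG)\}$ and with $|\sfS(\cG)|$ being a local quantity depending on $\cG$ only via $\mu$, I would then combine this with the uniform law. Here there is a small subtlety: $|\sfS(\cG)|$ can in principle depend on $\cG$, so to conclude exchangeability I would work with the product measure that is uniform over all $(\cG,\bfS)$ pairs with fixed $\mu(\cG)=\mu$, condition on the sigma-algebra generated by $\cT=\cB_\ell(o,\cG)$, and use that $\cT$ is preserved by $\Phi$. Concretely, I would show that for any pair of graphs $\cG,\cG'$ with the same neighborhood $\cT$, the number of $\bfS$ with $T_\bfS(\cG)=\cG'$ equals the number of $\bfS'$ with $T_{\bfS'}(\cG')=\cG$, which is immediate from the involution. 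Hence
\begin{equation*}
\bP(\cG=\cG_1,\,T_\bfS(\cG)=\cG_2)=\bP(\cG=\cG_2,\,T_\bfS(\cG)=\cG_1)
\end{equation*}
for all $\cG_1,\cG_2\in\Omega_{N,d}$, which is exactly the exchangeable pair property.

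The main obstacle I expect is the careful bookkeeping for property (ii): verifying that the ``tree + isolation'' conditions defining $I_\alpha$ are genuinely preserved by the switching, so that the admissible set does not change under $\Phi$. This is a local-geometry argument relying on the fact that admissible switchings affect only disjoint radius-$\fR/4$ neighborhoods in $\cG^{(\bT)}$, together with a case analysis showing that within each such neighborhood the pre- and post-switching configurations of the triples $\{a_\alpha,b_\alpha,c_\alpha\}$ and $\{l_\alpha,a_\alpha,b_\alpha\}$ (with swapped roles) are geometrically symmetric. Once this symmetry is established, the rest of the proof is a clean counting/bijection argument.
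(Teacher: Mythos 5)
This paper does not prove the lemma; it cites \cite[Lemma 7.3]{huang2024spectrum} directly, so there is no in-paper argument to compare against. Your involution-on-the-augmented-space strategy is indeed the approach taken in that reference and is the natural route, and your outline correctly identifies the three things that must be verified: preservation of $\cT$ (hence of $\sfS$), preservation of the admissible set, and the involution property.

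Two places deserve more care. First, your step (iii) claims $\Phi\circ\Phi=\mathrm{id}$ ``because each $T_{\vec S_\alpha}$ is its own inverse on the edge level,'' but with the orientation convention as literally written in \eqref{e:Tdef2}, namely $T(\vec S)=\{(v_1,v_4),(v_2,v_3)\}$, a direct computation shows $T$ cycles through the three perfect matchings of $\{v_1,v_2,v_3,v_4\}$ and has order three, so $T_{T(\bfS)}\circ T_{\bfS}\ne\mathrm{id}$ on graphs. The involution property requires the orientation of the second (non-boundary) edge to be reversed, i.e.\ one wants the new data to be $\{(v_1,v_4),(v_3,v_2)\}$; only then does applying the switching with the new data undo the old one. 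Relatedly, the switching does not ``exchange the roles of $(a_\alpha,b_\alpha)$ with $(l_\alpha,c_\alpha)$'': $l_\alpha$ is fixed and the triple cycles $(a_\alpha,b_\alpha,c_\alpha)\mapsto(c_\alpha,a_\alpha,b_\alpha)$, so the symmetry you invoke when checking $I_\alpha$ must be argued for a cyclic relabelling, not a transposition. Second, for the measure-preservation bookkeeping: $|\sfS(\cG)|=\bigl(Nd-2d|\bT|+2|E(\cT)|\bigr)^{\mu}$ depends on $\cG$ only through the ball $\cT$ (not merely through $\mu$), and since $\Phi$ fixes $\cT$ this quantity is conserved, which is exactly what makes the bijection measure-preserving on the fibers. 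You correctly single out the preservation of the admissible set as the main technical step; the key observations there are that the tree condition in $I_\alpha$ forces $a_\alpha$ and $c_\alpha$ to be far apart in $\cG^{(\bT)}$ once $\{b_\alpha,c_\alpha\}$ is deleted (so that adding $\{a_\alpha,c_\alpha\}$ cannot create a short cycle), and that the set-distances in the isolation condition are unchanged by adding or removing edges internal to $\{a_\alpha,b_\alpha,c_\alpha\}$, which is what keeps the other admissible switchings from interfering.
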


In the following, we introduce the set $\oOmega$ of $d$-regular graphs with $N$ vertices, which are locally tree-like. Estimates for the Green's function and Stieltjes transform (see \eqref{e:defepsilon}) hold with overwhelmingly high probability conditioned on $\oOmega$. 
\begin{definition}\label{def:omegabar}
	Fix $d\geq 3$ and a sufficiently small $0<\fc<1$, $\fR=(\fc /4)\log_{d-1}N$ as in \eqref{e:parameters}. We define the event $\oOmega$,  where the following occur: 
	\begin{enumerate}
		\item 
		The number of vertices that do not have a  tree neighborhood of radius $\fR$ is at most $N^{\fc}$.
		\item 
		The radius $\fR$ neighborhood of each vertex has an excess (i.e., the number of independent cycles) of at most $\omega_d$. 
	\end{enumerate}
\end{definition}

The event $\oOmega$ is a typical event. The following proposition from \cite[Proposition 2.1]{huang2024spectrum} states that $\oOmega$ holds with high probability. 
\begin{proposition}[{\cite[Proposition 2.1]{huang2024spectrum}}]\label{lem:omega}
	$\oOmega$ occurs with probability $1-\OO(N^{-(1-\fc)\omega_d})$.
\end{proposition}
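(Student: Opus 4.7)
My approach is to bound separately the two failure events in the definition of $\oOmega$, showing that condition (2) is the dominant contribution and produces the factor $N^{-(1-\fc)\omega_d}$. The intuition is that since $\fR=(\fc/4)\log_{d-1}N$, the ball $\cB_\fR(v,\cG)$ contains at most $d(d-1)^{\fR-1}\lesssim N^{\fc/4}$ vertices for every $v\in\qq{N}$, so each extra edge that closes a cycle inside such a ball costs a factor of $|\cB_\fR|/N\lesssim N^{\fc/4-1}$, and the exponent in the statement arises from $(\omega_d+1)$ such factors balanced against a union bound over the $N$ possible centers.

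For condition (2), I would work in the configuration model and explore $\cB_\fR(v,\cG)$ vertex-by-vertex in breadth-first order, pairing one half-edge at a time. Conditional on the partial exploration, the probability that the next half-edge pairs with one attached to an already-discovered vertex (thus contributing to the excess) is $\lesssim N^{\fc/4-1}$. Hence the probability of at least $\omega_d+1$ such closings in the exploration of a single ball is bounded by
\begin{align*}
\binom{d(d-1)^{\fR-1}}{\omega_d+1}\p{N^{\fc/4-1}}^{\omega_d+1}\lesssim N^{(\fc/2-1)(\omega_d+1)},
\end{align*}
and a union bound over the $N$ choices of $v$ yields probability at most $N^{1-(1-\fc/2)(\omega_d+1)}\lesssim N^{-(1-\fc)\omega_d}$ once $\omega_d\geq 1$ and $\fc$ is small. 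The standard contiguity between the configuration model and the uniform random $d$-regular graph then transfers this estimate to the uniform model. For condition (1), set $Y=|\{v\in\qq{N}:\cB_\fR(v,\cG)\text{ is not a tree}\}|$; every bad $v$ lies within graph distance $\fR$ of some cycle of length $\leq 2\fR$, so $Y\lesssim \fR\cdot(d-1)^\fR\cdot C_{\leq 2\fR}$, where $C_{\leq 2\fR}$ counts short cycles. It is classical that $C_{\leq 2\fR}$ is approximately Poisson with mean $\sum_{k=3}^{2\fR}(d-1)^k/(2k)\lesssim N^{\fc/2}/\log N$, and hence satisfies sub-exponential tails; in particular $Y\leq N^\fc$ holds with super-polynomially high probability, so this event is far more likely than condition (2) and does not affect the final exponent.

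The main obstacle I anticipate is making the combinatorial bookkeeping in condition (2) tight enough to produce the exponent $(1-\fc)\omega_d$: both the deterministic ball-size bound $|\cB_\fR|\lesssim N^{\fc/4}$ and the $(\omega_d+1)$-fold product of exploration probabilities must be handled carefully, and the passage from the configuration model to the uniform distribution on simple $d$-regular graphs should not inflate the exponent. Once this is achieved, condition (1) follows comparatively easily from Poisson concentration of short-cycle counts, and the two bounds combine to give $\bP[\oOmega^c]\lesssim N^{-(1-\fc)\omega_d}$.
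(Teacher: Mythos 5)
This proposition is quoted verbatim from \cite[Proposition 2.1]{huang2024spectrum}; the present paper offers no proof of it, so a direct comparison against ``the paper's own proof'' is not possible here. Evaluating your proposal on its own merits: the configuration-model exploration argument for condition (2) is the standard route for this type of statement, and your exponent arithmetic is correct --- $1+(\fc/2-1)(\omega_d+1)\leq -(1-\fc)\omega_d$ holds precisely when $\omega_d\geq 1$, which the paper guarantees, and conditional pairing probabilities of size $\OO(N^{\fc/4-1})$ are indeed available throughout the $\OO(N^{\fc/4})$ exploration steps.

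Two places need tightening before this would be a complete proof. First, ``contiguity'' between the configuration model and the uniform simple $d$-regular model is a qualitative statement and does not automatically transfer polynomial-rate bounds. What you actually want is to condition on the configuration producing a simple graph, an event whose probability is bounded below uniformly in $N$ for fixed $d$; then the failure probability is inflated by at most a constant factor, which is enough. Second, and more substantively, the claim that $C_{\leq 2\fR}$ is approximately Poisson with a sub-exponential upper tail is not automatic when the cycle lengths grow like $\log N$; the total-variation error in the Poisson approximation in this regime is not obviously small enough to inherit the tail. The cleaner fix is to prove a moment bound of the form $\bE\big[C_{\leq 2\fR}^{\,p}\big]\lesssim C(p)\,\bE[C_{\leq 2\fR}]^{\,p}$ for a fixed $p$, obtained by a union bound over $p$-tuples of candidate short cycles in the configuration model, and then apply Markov. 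Since the target failure probability $N^{-(1-\fc)\omega_d}$ is only polynomially small, a single sufficiently high but fixed moment, of order $p\asymp \omega_d/\fc$, suffices; no genuinely exponential tail is required. With these two repairs the overall plan would close and match the expected bound.
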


To ensure the switching edges are sufficiently spaced apart and have large tree neighborhoods, the following indicator functions can be utilized.

\begin{definition}
    \label{def:indicator}
Fix a $d$-regular graph $\cG\in \oOmega$, and an edge $\{i,o\}$ in $\cG$.
We consider the local resampling around $(i,o)$, with resampling data $\bfS=\{(l_\al, a_\al), (b_\al, c_\al)\}_{\al\in \qq{\mu}}$. Let $\cF=(i,o)\cup \{(b_\al, c_\al)\}_{\al\in \qq{\mu}}$, and $I(\cF, \cG)=1$ to denote the indicator function on the event that vertices close to edges in $\cF$ have radius $\fR$ tree neighborhoods, and edges in $\cF$ are distance $3\fR$ away from each other. Explicitly,  $I(\cF, \cG)$ is given by
\begin{align*}
   I(\cF, \cG):= \prod_{(b,c)\in \cF}A_{bc}\prod_{(b,c)\in \cF}\left(\prod_{x\in \cB_\ell(c,\cG)}\bm1(\cB_{\fR}(x, \cG) \text{ is a tree})\right) \prod_{(b,c)\neq (b',c')\in \cF}\bm1(\dist_\cG(c,c')\geq 3\fR).
\end{align*}
\end{definition}

The following  lemma from \cite[Lemma 3.16]{huang2024ramanujan} states that with high probability with respect to the randomness of $\bfS$, the randomly selected edges $(b_\al, c_\al)$ are far away from each other, and have large tree neighborhood. In particular $\wt \cG=T_\bfS \cG\in \oOmega$.
\begin{lemma}[{\cite[Lemma 3.16]{huang2024ramanujan}}]\label{lem:configuration}
Fix a $d$-regular graph $\cG\in \oOmega$, and an edge $\{i,o\}$ in $\cG$, such that for any $x\in \cB_\ell(o,\cG)$, $\cB_\fR(x,\cG)$ is a tree.
We consider the local resampling around $(i,o)$, with resampling data $\{(l_\al, a_\al), (b_\al, c_\al)\}_{\al\in \qq{\mu}}$. We denote the set of resampling data $\sfF(\cG)\subset \sfS(\cG)$ such that the following holds 
\begin{enumerate}
    \item 
for any $\al\neq \beta\in \qq{\mu}$, $\dist_\cG(\{i,o\}\cup \{b_\al, c_\al\},\{b_\beta, c_\beta\} )\geq3\fR$; 
\item 
for any  $v\in \cB_{\ell}(\{b_\al, c_\al\}_{\al\in \qq{\mu}}, \cG)$, the radius $\fR$ neighborhood of $v$ is a tree.
\end{enumerate}
Then $\bP_\bfS(\sfF(\cG))\geq 1-N^{-1+2\fc}$ (where $\bP_{\bfS}(\cdot)$ is the probability with respect to the randomness of $\bfS$ as in \Cref{def:PS}). Also, for $\bfS\in \sfF(\cG)$ the following holds:
 $\mu=d(d-1)^\ell$, ${\mathsf W}_{\bf S}=\qq{\mu}$ (recall from \eqref{Wdef}),  $\tcG=T_\bfS(\cG)\in \oOmega$.
\end{lemma}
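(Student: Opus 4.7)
The proof splits into a probability bound $\bP_\bfS(\sfF(\cG))\geq 1-N^{-1+2\fc}$ and a deterministic extraction of the three structural conclusions from $\bfS\in\sfF(\cG)$. Throughout, the parameter hierarchy $\ell\ll\fb\log_{d-1}N\ll\fc\log_{d-1}N$ with $\fR=(\fc/4)\log_{d-1}N$ gives $\mu\leq N^{\fb+\oo(1)}$, $(d-1)^\fR=N^{\fc/4+\oo(1)}$, and $(d-1)^{3\fR}=N^{3\fc/4+\oo(1)}$, which is used repeatedly. The count $\mu=d(d-1)^\ell$ is already immediate: the hypothesis that $\cB_\fR(x,\cG)$ is a tree for every $x\in\bT=\cB_\ell(o,\cG)$ (with $\ell<\fR$) forces $\cT$ to be a $d$-regular tree of depth $\ell$, whose edge boundary has size $d(d-1)^\ell$; each $(b_\al,c_\al)$ is drawn independently and uniformly from $\vec E(\cG^{(\bT)})$, of size $(1-\oo(1))Nd$.

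\textbf{Probability bound.} Let $\bB\deq\{v\in\qq{N}:\cB_\fR(v,\cG)\text{ is not a tree}\}$; by $\cG\in\oOmega$, $|\bB|\leq N^\fc$. Condition (2) of $\sfF(\cG)$ fails for index $\al$ exactly when $\{b_\al,c_\al\}$ meets $\cB_{\fR+\ell}(\bB,\cG)$, a set of size $\leq |\bB|\cdot d(d-1)^{\fR+\ell}\leq N^{5\fc/4+\oo(1)}$. So each $\al$ contributes probability $\OO(N^{-1+5\fc/4+\oo(1)})$, and a union bound over $\mu$ choices gives $\OO(N^{-1+5\fc/4+\fb+\oo(1)})\leq \tfrac{1}{2}N^{-1+2\fc}$. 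For condition (1), after exposing $(b_\al,c_\al)$, the number of oriented edges of $\cG$ within distance $3\fR$ of $\{b_\al,c_\al\}\cup\{i,o\}$ is $\OO(N^{3\fc/4+\oo(1)})$, so the probability that a different $(b_\beta,c_\beta)$ lands there is $\OO(N^{-1+3\fc/4+\oo(1)})$; union-bounding over the $\binom{\mu}{2}+\mu$ relevant pairs yields $\OO(N^{-1+3\fc/4+2\fb+\oo(1)})\leq \tfrac{1}{2}N^{-1+2\fc}$.

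\textbf{Structural consequences.} For $\bfS\in\sfF(\cG)$, the separation clause $\dist_{\cG^{(\bT)}}(\{a_\al,b_\al,c_\al\},\{a_\beta,b_\beta,c_\beta\})>\fR/4$ in $I_\al=1$ is immediate from condition (1) since $3\fR\gg \fR/4$. The tree clause, that $\cB_{\fR/4}(\{a_\al,b_\al,c_\al\},\cG^{(\bT)})$ together with the added edge $\{a_\al,b_\al\}$ is a tree, follows by combining three pairwise disjoint trees: $\cB_{\fR/4}(a_\al,\cG^{(\bT)})$, a subtree of $\cB_\fR(l_\al,\cG)$ (tree by hypothesis since $l_\al\in\bT$), and $\cB_{\fR/4}(b_\al,\cG^{(\bT)}), \cB_{\fR/4}(c_\al,\cG^{(\bT)})$, which are trees by condition (2); disjointness follows from condition (1), and gluing the single edge $\{a_\al,b_\al\}$ preserves the tree property. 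Hence $\bW_\bfS=\qq{\mu}$. Finally, $\tcG=T_\bfS(\cG)\in\oOmega$ is checked as follows: the switching modifies edges only inside $\bV\deq\cB_\fR(\bT\cup\{b_\al,c_\al\}_\al,\cG)$, so vertices outside $\bV$ retain their $\fR$-neighborhoods and hence their tree and excess properties; the set $\bV$ has size $\OO((d-1)^{\fR+\ell}+\mu(d-1)^\fR)=N^{\fc/4+\fb+\oo(1)}=\oo(N^\fc)$; and within $\bV$, the isolated tree patches around $\cT$ and around each $(b_\al,c_\al)$ reglue through the two new edges $\{l_\al,b_\al\},\{a_\al,c_\al\}$ into a larger tree, so no new cycles appear and the excess remains bounded by $\omega_d$.

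\textbf{Main obstacle.} The delicate step is the last one, verifying $\tcG\in\oOmega$: it requires tracking how the $\mu$ simultaneous simple switchings reconnect the disjoint tree patches around $\cT$ and around each $(b_\al,c_\al)$ without creating short cycles. The two conditions defining $\sfF(\cG)$ are engineered precisely for this, with condition (1) isolating distinct switchings into disjoint patches and condition (2) guaranteeing tree-likeness at the required radius; the hierarchy $\fo\ll\ell/\log_{d-1}N\ll\fb\ll\fc$ then leaves enough slack in every union bound to absorb the final $N^{-1+2\fc}$ tolerance.
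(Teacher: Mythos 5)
Your plan is sound — split into the probability estimate by union-bounding over the non-tree-like set $\bB$ and over collision events, then verify the three structural conclusions deterministically; the counting exponents all close comfortably under $\fo\ll\ell/\log_{d-1}N\ll\fb\ll\fc$.

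There is, however, a genuine gap in the claim that the separation clause $\dist_{\cG^{(\bT)}}(\{a_\al,b_\al,c_\al\},\{a_\beta,b_\beta,c_\beta\})>\fR/4$ ``is immediate from condition (1) since $3\fR\gg\fR/4$.'' Condition (1) of $\sfF$ controls only $\dist_\cG(\{i,o\}\cup\{b_\al,c_\al\},\{b_\beta,c_\beta\})$. Since $\dist_{\cG^{(\bT)}}\geq\dist_\cG$ and $\dist_\cG(a_\al,o)=\ell+1$, this does give $\dist_{\cG^{(\bT)}}(a_\al,\{b_\beta,c_\beta\})\geq 3\fR-\ell-1$ and $\dist_{\cG^{(\bT)}}(\{b_\al,c_\al\},\{b_\beta,c_\beta\})\geq 3\fR$. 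But it says nothing about $\dist_{\cG^{(\bT)}}(a_\al,a_\beta)$: both $a_\al$ and $a_\beta$ sit at $\cG$-distance $\ell+1$ from $o$, so they are very close to each other in $\cG$. You need a separate argument using the hypothesis that $\cB_\fR(o,\cG)$ is a tree: removing $\bT$ from this tree disconnects the subtree hanging below $a_\al$ from the one below $a_\beta$, so any $\cG^{(\bT)}$-path joining them must exit $\cB_\fR(o,\cG)$ and return, giving $\dist_{\cG^{(\bT)}}(a_\al,a_\beta)>2(\fR-\ell-1)>\fR/4$. Without this observation the verification that ${\mathsf W}_\bfS=\qq{\mu}$ is incomplete.

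Two smaller imprecisions: you describe ``three pairwise disjoint trees'' but list $\cB_{\fR/4}(b_\al,\cG^{(\bT)})$ and $\cB_{\fR/4}(c_\al,\cG^{(\bT)})$, which overlap since $b_\al\sim c_\al$ — the correct decomposition is the disjoint union $\cB_{\fR/4}(a_\al,\cG^{(\bT)})\sqcup\cB_{\fR/4}(\{b_\al,c_\al\},\cG^{(\bT)})$, a subtree of $\cB_\fR(l_\al,\cG)$ and a subtree of $\cB_\fR(b_\al,\cG)$ respectively, then glue with $\{a_\al,b_\al\}$. Also, the set of admissible indices is $\mathsf W_\bfS\subset\qq{\mu}$, not $\bW_\bfS$ (which the paper reserves for the vertex set $\{b_\al:\al\in\mathsf W_\bfS\}$). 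The $\tcG\in\oOmega$ step is, as you flag, the most delicate; your sketch captures the right idea (new edges bridge pairwise far-apart tree patches, so no new short cycles are created and the set of affected vertices has size $\oo(N^\fc)$), but a complete argument should explicitly verify that the non-tree-like vertex count cannot increase, not merely that $|\bV|=\oo(N^\fc)$, since $\cG$ may already saturate the $N^\fc$ bound.
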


\subsection{Green's function extension with general weights}\label{s:pre}
Fix degree $d\geq 3$, we recall the integer $\omega_d\geq 1$ from \cite[Definition 2.6]{huang2024spectrum}, which represents the maximum number of cycles a $d$-regular graph can have while still ensuring that its Green's function exhibits exponential decay. Instead of delving into the technical definition, it suffices to note two key properties: $\omega_d\geq 1$ and $\omega_d$ is nondecreasing with respect to $d$.


As we will see, for graphs $\cG\in \oOmega$,  their Green's functions can be approximated by tree extensions with overwhelmingly high probability. 
For the infinite $d$-regular tree $\cX_d$
 and the infinite $(d-1)$-ary tree $\cY_d$ (trees where the root has degree $d-1$ and all other vertices have degree $d$),
the following proposition computes its Green's function explicitly.
\begin{proposition}[{\cite[Proposition 2.2]{huang2024spectrum}}]\label{greentree}
Let $\cX_d$ be the infinite $d$-regular tree.
For all $z \in \bC^+$, its Green's function is
\begin{equation} \label{e:Gtreemkm}
  G^{\cX_d}_{ij}(z)=m_{d}(z)\left(-\frac{\msc(z)}{\sqrt{d-1}}\right)^{\dist_{\cX_d}(i,j)}.
\end{equation}
Let $\cY_d$ be the infinite $(d-1)$-ary tree with root vertex $o$.
Its Green's function is
\begin{equation} \label{e:Gtreemsc}
  G^{\cY_d}_{ij}(z)=m_{d}(z)\left(1-\left(-\frac{\msc(z)}{\sqrt{d-1}}\right)^{2{\rm anc}(i,j)+2}\right)\left(-\frac{\msc(z)}{\sqrt{d-1}}\right)^{\dist_{\cY_d}(i,j)},
\end{equation}
where ${\rm anc}(i,j)$ is the distance from the common ancestor of the vertices $i,j$ to the root $o$. 
In particular,
\begin{align}\label{e:Gtreemsc2}
G^{\cY_d}_{oi}(z)=\msc(z)\left(-\frac{\msc(z)}{\sqrt{d-1}}\right)^{\dist_{\cY_d}(o,i)}.
\end{align}
\end{proposition}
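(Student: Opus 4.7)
The plan is to exploit the recursive self-similarity of $\cX_d$ and $\cY_d$ via Schur complement formulas, then reduce the resulting relations to the algebraic identities between $\msc(z)$ and $\md(z)$ recorded in \eqref{e:msc_equation} and \eqref{e:md_equation}. The Green's function is well-defined on the infinite tree because $(H-z)^{-1}$ is bounded on $\ell^2$ for $z\in\bC^+$; alternatively one truncates to a finite ball of radius $R$ with free boundary and sends $R\to\infty$, where the exponential decay implicit in $|\msc(z)/\sqrt{d-1}|<1$ controls the truncation error.

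First, I would compute the diagonal entries. Removing the root $o$ from $\cY_d$ disconnects it into $d-1$ isomorphic copies of $\cY_d$, each rooted at a child of $o$. Because removing a single vertex from a tree disconnects it, the Schur complement for $1/G^{\cY_d}_{oo}$ collapses to $-z-\tfrac{1}{d-1}\sum_{k\sim o}G^{\cY_d^{(o)}}_{kk}$; symmetry identifies each $G^{\cY_d^{(o)}}_{kk}$ with $G^{\cY_d}_{oo}$, giving the quadratic $1/G^{\cY_d}_{oo}=-z-G^{\cY_d}_{oo}$, whose solution in $\bC^+$ is $\msc(z)$ by \eqref{e:msc_equation}. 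The same argument for $\cX_d$ (where removing $o$ gives $d$ copies of $\cY_d$) yields $1/G^{\cX_d}_{oo}=-z-\tfrac{d}{d-1}\msc(z)=1/\md(z)$ via \eqref{e:md_equation}.

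Next come the off-diagonal entries from the root. The Schur identity $G_{oj}=-G_{oo}\sum_{k\sim o}\tfrac{1}{\sqrt{d-1}}G^{(o)}_{kj}$ collapses to the single term $-\tfrac{G_{oo}}{\sqrt{d-1}}G^{(o)}_{v_1 j}$, where $v_1$ is the unique neighbor of $o$ on the path to $j$, because the other neighbors live in components of $\cY_d^{(o)}$ or $\cX_d^{(o)}$ that do not contain $j$. Identifying the component of $v_1$ with a copy of $\cY_d$ rooted at $v_1$ and iterating, an easy induction on $\dist(o,j)$ produces \eqref{e:Gtreemsc2} in $\cY_d$ and the analogue $G^{\cX_d}_{oj}=\md(z)(-\msc(z)/\sqrt{d-1})^{\dist(o,j)}$ in $\cX_d$. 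Translation invariance of $\cX_d$ under its automorphism group then upgrades the latter to \eqref{e:Gtreemkm} for arbitrary $i,j$.

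Finally, for \eqref{e:Gtreemsc} with $i,j\ne o$ I would induct on $a=\txt{anc}(i,j)$ using the block-inversion identity $G^{\cY_d}_{ij}=G^{\cY_d^{(o)}}_{ij}+G^{\cY_d}_{oi}G^{\cY_d}_{oj}/G^{\cY_d}_{oo}$. At $a=0$ the endpoints lie in different components of $\cY_d^{(o)}$, so $G^{\cY_d^{(o)}}_{ij}=0$ and the expression reduces to $\msc(z)\,(-\msc(z)/\sqrt{d-1})^{\dist(i,j)}$; matching this against \eqref{e:Gtreemsc} at $a=0$ boils down to verifying the algebraic identity $\md(z)(1-\msc(z)^2/(d-1))=\msc(z)$, which follows immediately from $\msc^2+z\msc+1=0$ combined with \eqref{e:md_equation}. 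For $a\ge 1$, the component of $\cY_d^{(o)}$ containing $i,j$ is an isomorphic copy of $\cY_d$ in which the common ancestor has depth $a-1$, so the inductive hypothesis applies, and a short telescoping simplification using the same algebraic identity yields \eqref{e:Gtreemsc}. The sole non-mechanical step is this algebraic identity, which pins the whole calculation together but is ultimately just bookkeeping once the recursion relations for $\msc$ and $\md$ are in hand.
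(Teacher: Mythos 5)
Your proof is correct. The paper cites this proposition from \cite{huang2024spectrum} without reproducing a proof, and your argument via Schur complements, self-similarity of $\cX_d$ and $\cY_d$ under vertex removal, and the algebraic reduction $\md(z)\bigl(1-\msc^2(z)/(d-1)\bigr)=\msc(z)$ (which follows from $\msc^2+z\msc+1=0$ and \eqref{e:md_equation}) is the standard route to this result and is complete, including the telescoping step in the induction on ${\rm anc}(i,j)$.
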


Next, we recall the idea of a Green's function extension with general weight $\Delta\in \mathbb C$ from {\cite[Section 2.3]{huang2024spectrum}}.

\begin{definition}\label{def:pdef}
    Fix degree $d\geq 3$, and a graph $\cT$ with degrees bounded by $d$. We define the function $P(\cT,z,\Delta)$ as follows. We denote $A(\cT)$  the adjacency matrix of $\cT$, and $D(\cT)$ the diagonal matrix of degrees of $\cT$. Then 
    \begin{align}\label{e:defP}
    P(\cT,z,\Delta):=\frac{1}{-z+A(\cT)/\sqrt{d-1}-(d -D(\cT))\Delta/(d-1)}.
    \end{align}
\end{definition}
The matrix $P(\cT,z,\Delta)$ is the Green's function of the matrix obtained from $A(\cT)/\sqrt{d-1}$ by attaching to each vertex $i\in \cT$ a weight $-(d-D_{ii}(\cT))\Delta/(d-1)$. When $\Delta=\msc(z)$, \eqref{e:defP} is the Green's function of the tree extension of $\cT$, i.e. extending $\cT$ by attaching copies of infinite $(d-1)$-ary trees to $\cT$ to make each vertex degree $d$. If $\cT$ is a tree, then in this case, the Green's function agrees with the Green's function of the infinite $d$-regular tree, as in \eqref{e:Gtreemkm}. 
For any vertex set $\bX$ in $\cT$, we define the following Green's function with vertex $\bX$ removed. Let $A^{(\bX)}(\cT)$ and $D^{(\bX)}(\cT)$ denote the matrices obtained from $A(\cT), D(\cT)$ by removing the row and column associated with vertex $\bX$. The Green's function is then defined as:
 \begin{align}\label{e:defPi}
    P^{(\bX)}(\cT,z,\Delta):=\frac{1}{-z+A^{(\bX)}(\cT)/\sqrt{d-1}-(d-D^{(\bX)}(\cT))\Delta/(d-1)}.
    \end{align}

%
%

For any integer $\ell\geq 1$, we define the functions $X_\ell(\Delta,z), Y_\ell(\Delta,z)$ as
\begin{align}\label{def:Y}
X_\ell(\Delta,z)=P_{oo}(\cB_\ell(o,\cX_d),z,\Delta),\quad Y_\ell(\Delta,z)=P_{oo}(\cB_\ell(o,\cY_d),z,\Delta),
\end{align}
where $\cX_d$ is the infinite $d$-regular tree with root vertex $o$, and $\cY_d$ is the infinite $(d-1)$-ary tree with root vertex $o$. 
Then $\msc(z)$ is a fixed point of the function $Y_\ell$, i.e. $Y_\ell(\msc(z),z)=\msc(z)$. Also, $X_\ell(\msc(z),z)=\md(z)$. 
The following proposition states that if $\Delta$ is sufficiently close to $\msc(z)$ and $w$ is close to $z$, then $Y_\ell(\Delta,w)$ is close to $\msc(z)$, and $X_\ell(\Delta,w)$ is close to $\md(z)$. We postpone its proof to \Cref{app:Green}.

\begin{proposition}\label{p:recurbound}
Given $z, \Delta\in \bC^+$ such that $\ell|\Delta-\msc(z)|\ll 1$, then the derivatives of $Y_\ell(\Delta,w)$ satisfies
\begin{align}\label{e:Yl_derivative}
   |\del_1 Y_\ell(\Delta,  z)| \lesssim 1, \quad 
   |\del_1^2 Y_\ell(\Delta,  w)|\lesssim \ell,
\end{align}
and the same statement holds for $X_\ell(\Delta,w)$.
Moreover, the functions $X_\ell(\Delta,w), Y_\ell(\Delta,w)$ 
satisfy
\begin{align}\begin{split}\label{e:recurbound}
&\phantom{{}={}}Y_\ell(\Delta,z)-\msc(z)
=\msc^{2\ell+2}(z)(\Delta-\msc(z))\\
&+\msc^{2\ell+2}(z)\md(z)\left(\frac{1-\msc^{2\ell+2}(z)}{d-1}+\frac{d-2}{d-1}\frac{1-\msc^{2\ell+2}(z)}{1-\msc^2(z)}\right)(\Delta-\msc(z))^2+\OO(\ell^2|\Delta-\msc(z)|^3).
\end{split}\end{align}
and
\begin{align}\begin{split}\label{e:Xrecurbound}
X_\ell(\Delta,z)-\md(z)
&=\frac{d}{d-1}\md^2(z)\msc^{2\ell}(z)(\Delta-\msc(z))+\OO\left(\ell|\Delta-\msc(z)|^2\right).
\end{split}\end{align}
\end{proposition}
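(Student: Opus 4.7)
The plan is to derive a clean Schur complement recursion for $Y_\ell$ at the root of $\cB_\ell(o, \cY_d)$ and then Taylor expand around the fixed point $\Delta = \msc(z)$. Applying the Schur complement formula to the matrix in \eqref{e:defP} at the root $o$, and observing that removing $o$ splits the ball into $d-1$ isomorphic subtrees each naturally identified with $\cB_{\ell-1}(o', \cY_d)$, I would first reduce the computation to
\begin{align*}
Y_\ell(\Delta,z) = \frac{1}{-z - Y_{\ell-1}(\Delta,z)}, \qquad Y_0(\Delta,z) = \frac{1}{-z - \Delta}.
\end{align*}
A subtlety is that the diagonal weight assigned to a child $o'$ in $P^{(\{o\})}$ comes from its degree $d$ in $\cB_\ell(o,\cY_d)$, whereas re-interpreting the subtree as $\cB_{\ell-1}(o', \cY_d)$ requires a weight corresponding to degree $d-1$; the discrepancy is reconciled by a rank-one Sherman--Morrison correction which combines with the root weight $-\Delta/(d-1)$ to produce the clean recursion above. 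Since the identity $\msc^2 + z\msc + 1 = 0$ is equivalent to $\msc = 1/(-z - \msc)$, the value $\Delta = \msc(z)$ is a fixed point, yielding $Y_\ell(\msc(z), z) = \msc(z)$ by induction. The analogous recursion $X_\ell = 1/(-z - dY_{\ell-1}/(d-1))$ at the root of $\cB_\ell(o, \cX_d)$ (which has $d$ children) then gives $X_\ell(\msc(z), z) = \md(z)$ via \eqref{e:md_equation}.

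Setting $\varepsilon := \Delta - \msc(z)$ and writing $Y_\ell = \msc + a_\ell \varepsilon + b_\ell \varepsilon^2 + r_\ell$, substituting into the recursion and using $-z - \msc = 1/\msc$ yields
\begin{align*}
a_\ell = \msc^2 a_{\ell-1}, \qquad b_\ell = \msc^2 b_{\ell-1} + \msc^3 a_{\ell-1}^2,
\end{align*}
with initial values $a_0 = \msc^2$, $b_0 = \msc^3$ obtained from the Taylor expansion of $Y_0$. Solving telescopically gives $a_\ell = \msc^{2\ell+2}$ and $b_\ell = \msc^{2\ell+3}(1-\msc^{2\ell+2})/(1-\msc^2)$. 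Using the algebraic identity $\md(d-1-\msc^2) = (d-1)\msc$, which follows directly from the defining equations \eqref{e:md_equation} and \eqref{e:msc_equation}, this expression for $b_\ell$ simplifies to precisely the coefficient displayed in \eqref{e:recurbound}.

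To obtain the $\OO(\ell^2|\Delta-\msc|^3)$ remainder I would induct on $\ell$: writing $Y_\ell = \msc/(1 - \msc(Y_{\ell-1}-\msc))$ and expanding the geometric series to third order, the cubic remainder $r_\ell$ picks up a contribution from $r_{\ell-1}$ and from an $a_{\ell-1}^3$-type term at each step. The hypothesis $\ell|\Delta-\msc(z)|\ll 1$, together with $|\msc(z)| = \OO(1)$ in the allowed parameter regime, ensures $|\msc(Y_{\ell-1}-\msc)|\ll 1$ uniformly in $\ell$, so the cubic coefficient accumulates at most quadratically. The derivative bounds \eqref{e:Yl_derivative} follow from differentiating the recursion: $\del_1 Y_\ell = Y_\ell^2 \, \del_1 Y_{\ell-1}$ combined with $|Y_\ell|\lesssim 1$ gives $|\del_1 Y_\ell|\lesssim 1$, and differentiating once more yields a telescoping sum bounded by $\OO(\ell)$. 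The expansion \eqref{e:Xrecurbound} then follows by plugging the linear expansion of $Y_{\ell-1}$ into $X_\ell = 1/(-z-dY_{\ell-1}/(d-1))$ and expanding around the identity $\md^{-1} = -z - d\msc/(d-1)$.

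The main obstacle is the absence of contraction in the recursion: near the spectral edge $z = \pm 2$ one has $|\msc(z)|\to 1$, so powers of $\msc$ neither decay nor grow geometrically, and any naive recursive estimate would blow up exponentially in $\ell$. The smallness hypothesis $\ell|\Delta-\msc(z)|\ll 1$ is essential precisely to keep the nonlinear remainder controlled across $\ell$ iterations, and the careful bookkeeping showing the cubic error propagates only polynomially (as $\ell^2$) rather than exponentially in $\ell$ is the main technical point of the argument.
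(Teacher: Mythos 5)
Your proof is correct and takes a genuinely different route from the paper's. The paper treats $Y_\ell(\Delta,z)-\msc(z)$ via a Neumann-series expansion of the Green's function, $P(\cH,z,\Delta)=P+P\sum_{k\ge1}((\Delta-\msc)\bI^\del P)^k$ with $P=P(\cH,z,\msc)$, and then evaluates the double sum $(P\bI^\del P\bI^\del P)_{oo}$ over pairs of boundary vertices by direct computation using the explicit tree Green's function formula \eqref{e:Gtreemsc}, while bounding the tail $(P(\bI^\del P)^k)_{oo}\lesssim(C\ell)^{k-1}$. You instead derive the one-step Schur recursion $Y_\ell = 1/(-z-Y_{\ell-1})$, $Y_0=1/(-z-\Delta)$, and extract the Taylor coefficients from the recursion itself, solving the telescoping recurrences $a_\ell=\msc^2 a_{\ell-1}$ and $b_\ell=\msc^2 b_{\ell-1}+\msc^3 a_{\ell-1}^2$. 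I verified your telescoped formula $b_\ell=\msc^{2\ell+3}(1-\msc^{2\ell+2})/(1-\msc^2)$ matches the paper's displayed coefficient using the algebraic identity $\md(d-1-\msc^2)=(d-1)\msc$, which you correctly note. Your approach is more elementary: it avoids the combinatorial sum over pairs of boundary vertices and makes the derivative bounds \eqref{e:Yl_derivative} essentially immediate by differentiating the recursion (giving $\del_1 Y_\ell=\prod_{k=0}^\ell Y_k^2$, bounded since $|\msc|\le1$ and $\ell|\Delta-\msc|\ll1$). The paper's Neumann expansion has the advantage of extending straightforwardly to off-diagonal entries $P(\cH,z,\Delta)_{ij}$ should those be needed. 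Your concluding observation that the constraint $\ell|\Delta-\msc|\ll1$ is essential near the edge (where $|\msc|\to1$ provides no geometric decay) and that the cubic error accumulates only polynomially as $\ell^2$ is exactly right.

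One small remark on exposition: your aside about a ``rank-one Sherman--Morrison correction'' reconciling the root weight is not really what happens, and is unnecessary. The formula actually used in the paper's Appendix B, $Y_\ell(\Delta,z)=(H-z-\Delta\bI^\del)^{-1}_{oo}$ with $\bI^\del$ supported on the level-$\ell$ leaves, assigns the root weight zero (effectively treating $o$ as if it had degree $d$, consistent with $\cY_d$ being a branch of $\cX_d$ with a phantom parent); this is the only reading consistent with the fixed point $Y_\ell(\msc,z)=\msc$. With that convention, removing the root in the Schur complement leaves each child with weight zero, which is exactly the root weight that $Y_{\ell-1}$ carries, so the recursion $Y_\ell=1/(-z-Y_{\ell-1})$ closes cleanly with no correction term. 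In other words there is no discrepancy to reconcile once the right convention is identified; the Sherman--Morrison remark, while harmless, is a red herring.
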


\subsection{Green's function Estimates}

The weak local law of $H=H(0)$ has been proven in {\cite[Theorem 4.2]{huang2024spectrum}}, which is recalled below (by taking $(\fa, \fb,\fc, \frak r)=(12,300,\fc,\fc/32)$). We denote 
\begin{align}\label{e:defQ}
G(z)=(H-z)^{-1}, \quad m_N(z)=\frac{1}{N}\Tr[G(z)], \quad Q(z):=\frac{1}{dN}\sum_{i\sim j}G_{ii}^{(j)}(z),
\end{align}
where the summation $i\sim j$ is over all the (directed) edges $(i,j)$ of $\cG$. 

\begin{theorem}[{\cite[Theorem 4.2]{huang2024spectrum}}] \label{thm:prevthm0}
Fix any sufficiently small $0<\fb\ll\fc<1$, $\fR=(\fc/4)\log_{d-1}(N)$ and any $z\in \bC^+$ , we define $\eta(z)=\Im[z], \kappa(z)=\min\{|\Re[z]-2|, |\Re[z]+2|\}$, and the error parameters
\begin{align}\label{e:defepsilon}
\varepsilon'(z):=(\log N)^{100}\left(N^{-10\fb} +\sqrt{\frac{\Im[m_d(z)]}{N\eta(z)}}+\frac{1}{(N\eta(z))^{2/3}}\right),\quad \varepsilon(z):=\frac{\varepsilon'(z)}{\sqrt{\kappa(z)+\eta(z)+\varepsilon'(z)}}.
\end{align}
We recall $P$ from \eqref{e:defP}.
For any $\fC\geq 1$ and $N$ large enough, with probability at least $1-\OO(N^{-\fC})$ with respect to the uniform measure on $\oOmega$, 
\be\label{eq:infbound0}
|G_{ij}(z)-P_{ij}(\cB_{\fR/100}(\{i,j\},\cG),z,\msc(z))|,\quad |Q(z)-\msc(z)|,\quad |m_N(z)-m_d(z)|\lesssim \varepsilon(z).
\ee
uniformly for every $i,j\in \qq{N}$, and any $z\in \bC^+$ with $\Im[z]\geq (\log N)^{300}/N$. We denote the event that \eqref{eq:infbound0} holds as $\Omega$. 
\end{theorem}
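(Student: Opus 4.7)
The plan is to prove the three bounds in \eqref{eq:infbound0} simultaneously by a bootstrap built on a self-consistent equation that links $G_{ii}(z)$ to $Q(z)$ via the tree-like local structure, with concentration of $Q(z)$ supplied by the local resampling machinery of \Cref{s:local_resampling}. The starting input is a weak a priori control: at a slightly larger scale $\Im[z]=\eta_0\gg N^{-1}(\log N)^{\fC}$, a Wegner-type inequality gives $\Im[m_N(z)]\lesssim 1$ on the event $\oOmega$, and the Schur complement together with the Ward identity then bounds $\max_{i,j}|G_{ij}(z)|\lesssim 1$. The scheme iteratively propagates this control down to $\Im[z]\geq (\log N)^{300}/N$.

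For the self-consistent equation, fix $i$ whose radius-$\fR$ neighborhood is a tree (on $\oOmega$, the $O(N^\fc)$ exceptional vertices are handled separately, contributing only $N^{-1+\fc}$ to averaged quantities). Iteratively applying Schur across the spheres of $\cB_{\fR/100}(i,\cG)$ yields
\begin{align*}
G_{ij}(z) = P_{ij}\bigl(\cB_{\fR/100}(\{i,j\},\cG),\,z,\,Q(z)\bigr) + \mathcal{E}_{ij}(z),
\end{align*}
where $\mathcal{E}_{ij}(z)$ collects the mismatch between $Q(z)$ and the boundary Green's function contributed at each boundary vertex, and is $O(\varepsilon(z))$ provided $|Q(z)-\msc(z)|\lesssim \varepsilon'(z)$. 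By \Cref{p:recurbound}, $Y_\ell$ is a strict contraction near $\msc(z)$ with factor $|\msc(z)|^{2\ell+2}\ll 1$, and the induced scalar equation for $Q(z)$ has Lipschitz stability constant controlled by $1/\sqrt{\kappa+\eta+\varepsilon'}$ near the edge---exactly the factor that promotes $\varepsilon'$ to $\varepsilon$.

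The core step is $|Q(z)-\msc(z)|\lesssim \varepsilon'(z)$ with the optimal $1/\sqrt{N\eta}$-rate. This is where \Cref{lem:exchangeablepair} enters: for each vertex $o$ the pair $(\cG, T_{\bfS}\cG)$ is exchangeable, and \Cref{lem:configuration} ensures that, with probability $1-N^{-1+2\fc}$, all $\mu=d(d-1)^\ell$ switchings are admissible and the fresh endpoints $(b_\al,c_\al)$ have tree neighborhoods of radius $\fR$. Expanding $G^{T_\bfS\cG}_{oo}(z)-G^\cG_{oo}(z)$ via \eqref{e:recurbound} and summing over admissible switchings gives an identity of the schematic form
\begin{align*}
Q(z)-\msc(z) = \msc^{2\ell+2}(z)\bigl(Q(z)-\msc(z)\bigr) + \cM(z) + \mathrm{Err},
\end{align*}
where $\cM(z)$ is a sum of nearly independent martingale differences with quadratic variation $\lesssim \Im[\md(z)]/(N\eta)$, and $\mathrm{Err}$ absorbs non-admissible configurations and exceptional vertices at size $N^{-10\fb}$. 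Since $|\msc(z)|<1$ strictly below the edge, this equation solves for $Q(z)-\msc(z)$ at the advertised precision with overwhelmingly high probability.

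The hardest technical point is closing the bootstrap uniformly up to the spectral edge, where the stability factor $1/\sqrt{\kappa+\eta}$ threatens to blow up and where the naive contraction factor $|\msc(z)|^{2\ell+2}$ approaches $1$. One must exploit the sharp square-root cancellation in \eqref{e:recurbound}--\eqref{e:Xrecurbound} together with the edge behavior \eqref{e:imm_behavior} to confirm this factor is indeed the correct stability constant, rather than a stronger polynomial loss. Finally, one passes from a polynomial grid of $z$ to all $z$ with $\Im[z]\geq (\log N)^{300}/N$ via Lipschitz continuity of $G$ in $z$ and a union bound, and averages the diagonal estimate over $i$ (with the $O(N^\fc)$ non-tree vertices contributing $O(N^{\fc-1})$) to deduce the claimed control on $m_N(z)-\md(z)$.
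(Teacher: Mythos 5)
This theorem is not proved in the present paper; it is recalled verbatim from \cite[Theorem 4.2]{huang2024spectrum}, so there is no internal proof against which to compare your sketch. That said, your outline does track the broad strategy of that reference: a Schur-complement self-consistent equation on tree-like radius-$\fR/100$ neighborhoods with boundary weight $Q(z)$ in place of $\msc(z)$, concentration of $Q(z)$ supplied by the exchangeable-pair local resampling of \Cref{lem:exchangeablepair}, and a stability analysis that trades the precision on $Q-\msc$ for the precision $\varepsilon(z)$ on the Green's function entries.

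Two substantive concerns. First, the schematic identity $Q-\msc=\msc^{2\ell+2}(Q-\msc)+\cM+\mathrm{Err}$ followed by ``since $|\msc(z)|<1$ strictly below the edge, this equation solves for $Q-\msc$'' is wrong exactly where the theorem is hard: at the edge $1-\msc^{2\ell+2}(z)\asymp \ell\sqrt{\kappa+\eta}\to 0$, so the linearized equation degenerates and one must work with the full quadratic equation $(Q-\msc)(Q-\msc+b)=\delta$ with $|b|\asymp\sqrt{\kappa+\eta}$, exactly as this paper does in \eqref{e:Qt_quadratic}--\eqref{e:QQQ}; you flag this as the ``hardest technical point'' but it is the crux of the whole argument, not a finishing detail, and stating the incorrect linear mechanism as the core step is a genuine gap. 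Second, your proposal targets $|Q-\msc|\lesssim\varepsilon'(z)$, which near the edge (where $\kappa+\eta+\varepsilon'\ll 1$) is strictly \emph{stronger} than the claimed bound $\varepsilon(z)=\varepsilon'/\sqrt{\kappa+\eta+\varepsilon'}\geq\varepsilon'$, and cannot follow from the linear reasoning you give; the quadratic structure is what produces the stated $\varepsilon$. Beyond these, the sketch is silent on how the bootstrap is initialized (the intermediate-scale weak local law, which is itself the source of the $N^{-10\fb}$ term in $\varepsilon'$) and on how the pointwise bound $|G_{ij}-P_{ij}(\cB_{\fR/100}(\{i,j\},\cG),z,\msc(z))|\lesssim\varepsilon(z)$ is obtained uniformly over the $O(N^{\fc})$ exceptional vertices whose radius-$\fR$ neighborhoods are not trees; your parenthetical remark only controls their contribution to averaged quantities, not the uniform entrywise estimate the theorem asserts.
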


We notice that the first statement in \eqref{eq:infbound0} implies that for $\Im[z]\geq (\log N)^{300}/N$, $\Im[G_{ii}(z)]\lesssim 1$. It follows that eigenvectors are delocalized 
\begin{align}\label{e:delocalization}
    \|\bmu_s\|^2_\infty\lesssim \max_{1\leq i\leq N}((\log N)^{300}/N)\Im[G_{ii}(\lambda_s+\ri (\log N)^{300}/N)]\lesssim (\log N)^{300}/N\ll N^{\fo/2-1}.
\end{align} 

In the rest of this section we collect some basic estimates of the Green's function $G(z)$, and the Stieltjes transform $m_N(z)$ and $\wt m_N(z)$ (the Stieltjes transform of the switched graph) from \cite[Lemma 2.18 and Lemma 2.19]{huang2024ramanujan}.  By taking $t=0$, in \cite[Lemma 2.18 and Lemma 2.19]{huang2024ramanujan}, then $G(z,0)=G(z)$, $m_0(z)=m_N(z)$, and $\wt m_0(z)=\wt m_N(z)$.

\begin{lemma}[{\cite[Lemma 2.18 and Lemma 2.19]{huang2024ramanujan}}]\label{l:basicG}
Fix a $d$-regular graph $\cG\in \Omega$, and an edge $\{i,o\}$ in $\cG$.
We consider the local resampling around $(i,o)$, with resampling data $\bfS=\{(l_\al, a_\al), (b_\al, c_\al)\}_{\al\in \qq{\mu}}$. We denote $\cT=\cB_\ell(o,\cG)$ with vertex set $\bT$, and the switched graph $\wt \cG=T_\bfS(\cG)$. We let $\cF=(i,o)\cup \{(b_\al, c_\al)\}_{\al\in \qq{\mu}}$, and assume that $\cG, \wt \cG\in \Omega$, and 
$
I(\cF, \cG)=1$ (recall from \Cref{def:indicator}). The following holds: 
   \begin{align}\label{e:tmmdiff}
    |\wt m_N (z)-m_N (z)|\lesssim \frac{(d-1)^\ell N^\fo \Im[m_N (z)]}{N\eta}.
\end{align}
We recall $P^{(\bX)}$ from \eqref{e:defPi}.   For any vertex set $\bX=\emptyset$, $\{b_\al b_\beta\}$, $\bT$, $\bT\cup \{b_\al, b_\beta\}$, or $\bT\cup \bW$:
\begin{align}\label{eq:local_law}
&\max_{x,y\not\in \bX}|G^{(\bX)}_{xy}(z)-P^{(\bX)}_{xy}(\cB_{\fR/100}(\{x,y\}\cup \bX, \GG),z ,\msc(z ))|\lesssim N^{-\fb},\\
\label{e:Gest}
        &\max_{x,y\not\in \bX}|\Im[ G^{(\bX)}_{xy}(z)]|\lesssim N^\fo \Im[ m_N (z)]
    \end{align}
    
\end{lemma}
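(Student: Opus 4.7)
The plan is to derive \Cref{l:basicG} by combining the local law Theorem~\ref{thm:prevthm0} with two classical resolvent tools: the rank-perturbation identity for comparing $G$ and $\wt G$, and the Schur complement formula for relating $G^{(\bX)}$ to $G$. The hypotheses $\cG,\wt\cG\in\Omega$ and $I(\cF,\cG)=1$ (see \Cref{def:indicator}) supply exactly what is needed: entrywise bounds from \eqref{eq:infbound0}, together with the geometric fact that all vertex sets $\bX$ listed in the lemma consist of well-separated clusters, each carrying a radius-$\fR$ tree neighborhood on which Proposition~\ref{greentree} gives sharp decay estimates.

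For \eqref{e:tmmdiff}, I first note that the local resampling switches at most $\mu=d(d-1)^\ell$ edges, so $V\deq\wt H-H$ has $\OO((d-1)^\ell)$ nonzero entries of size $\OO(1/\sqrt{d-1})$, all supported near $\bT$. The resolvent identity $\wt G(z)-G(z)=-G(z)V\wt G(z)$ yields
\[
\wt m_N(z)-m_N(z)=-\frac{1}{N}\Tr\bkt{G(z)V\wt G(z)},
\]
in which the trace is a sum of at most $\OO((d-1)^\ell)$ terms of the form $\wt G_{x'x}G_{yy'}$ indexed by switched vertex pairs. Cauchy--Schwarz together with the Ward identity $\sum_y|G_{xy}(z)|^2=\Im G_{xx}(z)/\eta$ and the bound $\Im G_{xx}(z)\lesssim N^{\fo}\Im m_N(z)$ (a direct consequence of the tree approximation in Theorem~\ref{thm:prevthm0}) controls each summand by $\OO(N^{\fo}\Im m_N(z)/\eta)$, producing the claimed estimate.

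For \eqref{eq:local_law} with $\bX\neq\emptyset$, I would invoke the Schur complement
\[
G^{(\bX)}_{xy}(z)=G_{xy}(z)-\sum_{a,b\in\bX}G_{xa}(z)\bkt{(G|_\bX(z))^{-1}}_{ab}G_{by}(z),
\]
and its exact analogue on the tree extension for $P^{(\bX)}$ over $\cT'=\cB_{\fR/100}(\{x,y\}\cup\bX,\cG)$. The isolation condition $I(\cF,\cG)=1$ forces distinct clusters inside $\bX$ to lie at graph distance at least $3\fR$, so Proposition~\ref{greentree} implies that the off-diagonal entries of $P|_\bX$ are exponentially small, and \eqref{eq:infbound0} transfers the same decay to $G|_\bX$. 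Thus $G|_\bX$ is a small perturbation of a block-diagonal matrix whose blocks are either the scalar $\md(z)$ (for the isolated vertices $b_\al$) or the invertible tree-extension matrix on $\cT$; hence $(G|_\bX(z))^{-1}$ exists, is close to $(P|_{\bX\cap\cT'}(\cT',z,\msc(z)))^{-1}$, and has operator norm $\OO(1)$. Substituting \eqref{eq:infbound0} for each factor $G_{xa},G_{ab},G_{by}$ in the Schur sum gives \eqref{eq:local_law} with error $\OO(N^{-\fb})$. The bound \eqref{e:Gest} then follows by taking the imaginary part of the $P^{(\bX)}$-approximation and using $\Im\msc(z)\asymp\Im\md(z)\asymp\Im m_N(z)$ from \eqref{e:imm_behavior} together with $|m_N(z)-\md(z)|\lesssim\varepsilon(z)$.

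The main obstacle I anticipate is controlling $(G|_\bX(z))^{-1}$ when $\bX=\bT\cup\bW$ has size $\asymp(d-1)^\ell$: the growth in $|\bX|$ must be dominated by the exponential decay $|\msc(z)|^{3\fR}=(d-1)^{-3\fR/2}$ of the off-diagonal entries coming from the $3\fR$-separation encoded in $I(\cF,\cG)$. This is precisely why the parameter hierarchy $\ell\ll\fR$ in \eqref{e:parameters} is essential, and quantifying it through a Neumann series expansion of $(G|_\bX(z))^{-1}$ around its block-diagonal approximation is the one place where care is needed to keep all errors uniform in $N$.
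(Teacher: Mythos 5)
This lemma is not proved in the present paper: it is quoted verbatim from \cite[Lemma 2.18 and Lemma 2.19]{huang2024ramanujan}, so there is no in-paper argument to compare against, and your proposal must be judged on its own merits. Your mechanism for \eqref{e:tmmdiff} (resolvent identity $\wt G-G=-GV\wt G$, Ward identity, Cauchy--Schwarz) and for \eqref{eq:local_law} (Schur complement plus Neumann expansion of $(G|_\bX)^{-1}$ around a block-diagonal approximation, using the $3\fR$-separation built into $I(\cF,\cG)=1$ so that $(d-1)^\ell\cdot\max(N^{-\fb},(d-1)^{-3\fR/2})\ll 1$) is the right outline. One thing you glossed over in \eqref{e:tmmdiff}: Ward and Cauchy--Schwarz naturally produce $\sqrt{\Im\wt m_N\,\Im m_N}/\eta$ rather than $\Im m_N/\eta$, so you must also apply Theorem~\ref{thm:prevthm0} to $\wt\cG$ and then close a self-consistency loop via $|\Im\wt m_N-\Im m_N|\leq|\wt m_N-m_N|$.

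The genuine gap is in your treatment of \eqref{e:Gest}. You propose to obtain $|\Im G^{(\bX)}_{xy}|\lesssim N^\fo\Im m_N(z)$ by ``taking the imaginary part of the $P^{(\bX)}$-approximation,'' but this cannot work: \eqref{eq:local_law} only controls $G^{(\bX)}_{xy}-P^{(\bX)}_{xy}$ to accuracy $\OO(N^{-\fb})$, and that error has no a priori smallness in its imaginary part. In the regime $\Im[z]\geq N^{-1+\fg}$, $|z-2|\leq N^{-\fg}$ one has $\Im m_N(z)\lesssim N^{-\fg/2}$, and since $\fb\ll\fg$ in the hierarchy \eqref{e:parameters}, $N^{-\fb}\gg\Im m_N(z)$; near the edge the gap is even wider, $\Im m_N(z)\asymp N^{-1/3}$. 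So the $P^{(\bX)}$-approximation is far too coarse at the scale \eqref{e:Gest} demands. The correct argument is of a different type. For $\bX=\emptyset$ it uses the spectral decomposition $\Im G_{xy}(z)=\eta\sum_s\bmu_s(x)\bmu_s(y)/|\lambda_s-z|^2$, Cauchy--Schwarz $|\Im G_{xy}|\leq\sqrt{\Im G_{xx}\,\Im G_{yy}}$, and delocalization $\|\bmu_s\|_\infty^2\lesssim N^\fo/N$ from \eqref{e:delocalization} to give $\Im G_{xx}\lesssim N^\fo\Im m_N$. For nonempty $\bX$ one inserts the Schur complement $G^{(\bX)}_{xy}=G_{xy}-\sum_{a,b\in\bX}G_{xa}[(G|_\bX)^{-1}]_{ab}G_{by}$, expands the imaginary part of the triple product so each term carries exactly one $\Im[\cdot]$ factor, controls $\Im[(G|_\bX)^{-1}]$ via \eqref{e:Ainverse}, and bounds the remaining factors by $\OO(1)$ using \eqref{eq:local_law}. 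This is exactly the pattern the present paper repeats in \eqref{e:I_approx}, \eqref{e:II_approx}, and \eqref{e:III_approx}, and it is what produces an estimate at scale $\Im m_N$ rather than $N^{-\fb}$.
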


%
%
\subsection{Edge Universality and Optimal Rigidity}

In this section we recall the main results on the optimal rigidity and edge universality of random $d$-regular graphs from  \cite{huang2024ramanujan}.

For $2\leq i \leq N$, we expect the $i$-th largest eigenvalue of the normalized adjacency matrix $H$ to be closed to the classical eigenvalue locations $\gamma_i$, where $\gamma_i$ is defined so that 
\be\label{eq:gammadef}
\int_{\gamma_i}^2 \varrho_d(x)\rd x=\frac{i-1/2}{N-1},\quad 2\leq i\leq N.
\ee

\begin{theorem}[{\cite[Theorem 1.1]{huang2024ramanujan}}]\label{thm:eigrigidity}
Fix $d\geq 3$, and an arbitrarily small $\fa>0$. There exists a positive integer $\omega_d \geq 1$, depending only on $d$, such that conditioned on $\cG\in \Omega$, with overwhelmingly high probability, the eigenvalues $\lambda_1 = {d}/{\sqrt{d-1}} \geq \lambda_2 \geq \cdots \geq \lambda_N$ of the normalized adjacency matrix $H$ of random $d$-regular graphs satisfy:
\begin{align}\label{e:optimal_rigidity}
|\lambda_i-\gamma_i|\leq N^{-2/3+\fa}(\min\{i,N-i+1\})^{-1/3},
\end{align}
for every $2\leq i\leq N$ and $\gamma_i$ are the classical eigenvalue locations, as defined in \eqref{eq:gammadef}.
\end{theorem}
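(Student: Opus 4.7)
The plan is to upgrade the weak local law of \Cref{thm:prevthm0} to an optimal Stieltjes-transform estimate and then translate this into eigenvalue rigidity by a standard Helffer--Sjöstrand contour argument. I would proceed in three stages.

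\textbf{Stage 1 (improved local law for $m_N$).} The weak local law gives $|m_N(z)-m_d(z)|\lesssim \varepsilon(z)$, which near the edge is roughly $(N\eta)^{-1/3}$ at scale $\eta\asymp N^{-2/3}$. To improve this, I would exploit the exchangeability in \Cref{lem:exchangeablepair}: for resampling data $\bfS$ centered at an edge $(i,o)$, the pair $(\cG,T_{\bfS}\cG)$ has the same law, and \eqref{e:tmmdiff} shows that $\wt m_N-m_N$ is small on the admissible set. Writing $m_N(z)-m_d(z)$ via the Schur complement formula around the radius-$\ell$ ball $\bT$, and using the tree-extension identity $X_\ell(\msc(z),z)=m_d(z)$ from \eqref{def:Y}, one obtains a quadratic self-consistent equation $(m_N-m_d)\cdot(\text{stable factor})=\cE$, where $\cE$ is the fluctuation of $\tfrac1\mu\sum_\al (G^{(\bT)}_{b_\al b_\al}(z)-\msc(z))$ over the uniformly chosen boundary edges. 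The exchangeability together with the local law bound on individual entries reduces $|\cE|$ to a genuinely mean-zero fluctuation that can be controlled by a high-moment argument.

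\textbf{Stage 2 (optimal bound near the edge).} For $\Re[z]$ close to $2$, the square-root behavior \eqref{e:mscmd} makes the stability factor of the self-consistent equation behave like $\sqrt{\kappa+\eta}$, which is the usual edge instability. The argument from Stage 1 must therefore be bootstrapped: starting from $|m_N-m_d|\lesssim \varepsilon(z)$ one feeds the estimate back into $\cE$ (via \eqref{eq:local_law}, \eqref{e:Gest} and \Cref{p:recurbound} on quadratic expansion of $Y_\ell, X_\ell$ near the fixed point $\msc(z)$), iterating on a dyadic sequence of scales $\eta_k$ decreasing to $N^{-2/3+\fa}$. The outcome should be $|m_N(z)-m_d(z)|\lesssim N^{\fa}/(N\eta)$ uniformly on the spectral domain $\{|z-2|\leq N^{-\fg},\ \Im z\geq N^{-2/3+\fa}\}$, conditionally on $\Omega$.

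\textbf{Stage 3 (from Stieltjes transform to rigidity).} Let $N_N$ and $N_d$ denote the counting functions of the empirical spectral measure and the Kesten--McKay law. A standard Helffer--Sjöstrand contour integration shows that the optimal bound on $m_N-m_d$ from Stage 2 translates into $|N_N(E)-N_d(E)|\lesssim N^{\fa-1}(\min\{i,N-i+1\})^{-1/3}(\kappa+N^{-2/3})^{-1/2}$ in a neighborhood of the edge, with an analogous statement in the bulk. Inverting via the classical locations \eqref{eq:gammadef} yields $|\lambda_i-\gamma_i|\leq N^{-2/3+\fa}(\min\{i,N-i+1\})^{-1/3}$, which is \eqref{e:optimal_rigidity}.

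The main obstacle is Stage 2. The exchangeability gives a one-step gain, but extracting the full $1/(N\eta)$ scale at the edge requires simultaneously handling (i) the vanishing stability factor $\sqrt{\kappa+\eta}$ of the quadratic self-consistent equation; (ii) the graph-conditional nature of the resampling (one must stay inside $\oOmega$ throughout, controlled by \Cref{lem:configuration}); and (iii) the fact that a single round of resampling only randomizes $d(d-1)^\ell$ edges, so repeated or nested resamplings must be combined carefully to achieve genuine square-root cancellation in the error term $\cE$ without losing the exponential-decay estimates for $P$ in \Cref{p:recurbound}.
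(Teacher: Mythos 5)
Theorem~\ref{thm:eigrigidity} is quoted from \cite{huang2024ramanujan} and is not proven in this paper; the present paper only imports its consequences, most importantly the loop-equation moment estimate \eqref{e:QYQbound2} and the a priori bound \eqref{e:Qm_est} recalled in \Cref{c:rigidity}. Your three-stage plan points in the right general direction: local resampling to obtain a self-consistent equation, a near-edge bootstrap to beat the $\sqrt{\kappa+\eta}$ instability, and a Helffer--Sj{\"o}strand contour to pass from Stieltjes-transform bounds to eigenvalue rigidity. Stage~3 in particular is essentially how \Cref{lem:parti-clo} is used in \Cref{p:eig_concentration} to convert \eqref{e:mdiff_tight}--\eqref{e:mdiff_tight2} into eigenvalue counting, and your Stage~1 correctly identifies exchangeability (\Cref{lem:exchangeablepair}) as the source of the improvement.

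However, Stages~1 and~2 contain a concrete gap. The self-consistent equation is not of the form $(m_N-m_d)\cdot(\text{stable factor})=\cE$; the closed system lives on the averaged diagonal quantity $Q(z)$ of \eqref{e:defQ}, which obeys the near-fixed-point relation $Q\approx Y_\ell(Q,z)$, and $m_N$ is then recovered from $m_N\approx X_\ell(Q,z)$ (see \eqref{e:equation_est} and \Cref{p:recurbound}). The technical heart, which your sketch labels ``a high-moment argument'' but never carries out, is establishing genuine square-root cancellation in $Q-Y_\ell(Q,z)$ down to $\eta\gtrsim N^{-2/3}$, i.e., the estimate \eqref{e:QYQbound2}. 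A single round of local resampling randomizes only $d(d-1)^\ell$ boundary edges, and a naive dyadic bootstrap on $\eta$ does not close the quadratic equation $(Q-\msc)^2+b(Q-\msc)=\delta$ at the edge because the stability factor $|b|\asymp\sqrt{\kappa+\eta}$ vanishes there; what is required is a self-improving $2p$-th moment bound on $Q-Y_\ell(Q,z)$ that interweaves the switching randomness with the local-law inputs \eqref{eq:local_law}--\eqref{e:Gest}, and this is exactly the content of \cite[Corollary C.3]{huang2024ramanujan}. Your proposal correctly flags these obstacles but does not resolve them, so what you have is a sound roadmap rather than a proof.
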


\begin{theorem}[{\cite[Theorem 1.2]{huang2024ramanujan}}]
\label{t:universality}
Fix $d\geq 3$, $k\geq 1$ and $s_1,s_2,\cdots, s_k \in \bR$, and let $\cA=d(d-1)/(d-2)^2$. There exists a small $\fa>0$ such that the eigenvalues $\lambda_1 = {d}/{\sqrt{d-1}} \geq \lambda_2 \geq \cdots \geq \lambda_N$ of the normalized adjacency matrix $H$ of random $d$-regular graphs satisfy: 
\begin{align*}\begin{split}
 \bP_{H}\left( (\cA N)^{2/3} ( \lambda_{i+1} - 2 )\geq s_i,1\leq i\leq k \right)= \bP_{\mathrm{GOE}}\left( N^{2/3} ( \mu_i - 2  )\geq s_i,1\leq i\leq k \right) +\OO(N^{-\fa}),
\end{split}\end{align*}
where $\mu_1\geq \mu_2\geq \cdots \geq\mu_N$ are the eigenvalues of the GOE.
The analogous statement holds for the smallest eigenvalues $-\lambda_N,\ldots,-\lambda_{N-k+1}$.
\end{theorem}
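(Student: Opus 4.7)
The plan is to establish edge universality by combining the optimal eigenvalue rigidity with the universality of Dyson Brownian motion (DBM) at the edge, and then transferring the result back to the random regular graph via a resampling-based Green's function comparison. The a priori input is \Cref{thm:eigrigidity}, which places each $\lambda_i$ within a window of width $N^{-2/3+\fa}(\min\{i,N-i+1\})^{-1/3}$ around its classical location $\gamma_i$; this is precisely the rigidity scale that DBM edge universality requires as initial data.

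Next, I would introduce an auxiliary Gaussian-divisible ensemble $H_t\deq H+\sqrt{t}\,W$, with $W$ an independent normalized symmetric Gaussian matrix and $t=N^{-1/3+\epsilon}$ for some small $\epsilon>0$. Combining the rigidity above with edge DBM universality (Bourgade--Erd\H{o}s--Yau--Yin, Landon--Yau) yields
\begin{align*}
\bP\bigl((\cA N)^{2/3}(\lambda_{i+1}(H_t)-2)\geq s_i,\;1\leq i\leq k\bigr)
=\bP_{\mathrm{GOE}}\bigl(N^{2/3}(\mu_i-2)\geq s_i,\;1\leq i\leq k\bigr)+\oo(1),
\end{align*}
for any fixed $s_1,\ldots,s_k\in\bR$, where $\mu_1\geq\cdots\geq\mu_N$ are the GOE eigenvalues.

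The remaining step is a comparison between $H$ and $H_t$ via local resampling. By \Cref{lem:exchangeablepair} the pair $(\cG, T_\bfS\cG)$ is exchangeable, so expectations of smooth test functions of the Green's function near the edge are invariant under switching. Expanding the switched Green's function via the Schur complement as in \eqref{e:HSchur}--\eqref{e:G-msc} and averaging over $\bfS$, with the help of the tree approximation in \Cref{p:recurbound} and the bounds \eqref{eq:infbound0} and \eqref{e:Gest}, the effect of one admissible switching should reproduce to leading order the mean and covariance of an infinitesimal step of the additive Gaussian flow. Telescoping along an interpolation $H_s$, $0\leq s\leq t$, then transfers the edge statistics from $H_t$ back to $H=H_0$ and to all $k$ extreme eigenvalues jointly.

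The main obstacle is this last step: matching the nonlinear switching increment with a genuine Gaussian perturbation to precision $\oo(N^{-2/3})$. Unlike the Wigner setting, switching is a discrete, nonadditive operation on the constrained manifold of $d$-regular graphs, which removes and inserts edges while preserving $d$-regularity. Identifying the variance correctly requires the quadratic term in \eqref{e:recurbound} to produce exactly the edge constant $\cA=d(d-1)/(d-2)^2$, and controlling higher cumulants requires \Cref{lem:configuration} to ensure the resampling edges are well separated and have tree neighborhoods, so that near-independence across switchings and the Green's function extensions of \Cref{def:pdef} can be used. Finally one must propagate the rigidity \eqref{e:optimal_rigidity} along the interpolation so that the small error estimates survive the telescoping sum.
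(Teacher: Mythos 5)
This theorem is not proved in the present paper; it is imported verbatim from \cite[Theorem 1.2]{huang2024ramanujan} and used as a black box (together with the rigidity estimate \Cref{thm:eigrigidity} and the quoted corollaries in \Cref{c:rigidity}). There is therefore no internal proof to compare against. Your proposal is a reconstruction of the argument that the cited reference itself uses, and at the level of strategy it is essentially correct: optimal rigidity as input, edge universality along a Gaussian-divisible / Dyson Brownian motion flow, and a Green's function comparison between $H$ and the flowed matrix carried out via local resampling rather than a Lindeberg swap, with \Cref{p:recurbound}, \Cref{lem:configuration} and the tree extensions of \Cref{def:pdef} supplying the quantitative control.

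One caveat worth flagging in your sketch: $H_t = H + \sqrt{t}\, W$ with $W$ an unconstrained GOE matrix does not respect the $d$-regular structure --- it destroys the row-sum constraint and hence the deterministic top eigenpair $(\lambda_1, \bmu_1) = (d/\sqrt{d-1}, \bm 1/\sqrt N)$. The actual construction in \cite{huang2024ramanujan} works with a constrained Gaussian divisible ensemble (equivalently, a DBM restricted to the hyperplane $\bm 1^\perp$), and all comparisons are done for the normalized adjacency matrix on that subspace. Without this modification the flow $H_t$ is not in the same universality class near the edge of the Kesten--McKay spectrum, and the identity of the scaling constant $\cA = d(d-1)/(d-2)^2$ would not come out of the DBM step correctly. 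Your remark at the end about matching the quadratic term in \eqref{e:recurbound} to $\cA$ and about the nonadditive nature of switchings is exactly the right concern; the resolution in the cited paper is to encode the switching increment through the loop equation for $Q - Y_\ell(Q,z)$ (as in \eqref{e:QYQbound2}) rather than to match a literal Gaussian increment step by step.
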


We recall the two functions $X_\ell, Y_\ell$ from \eqref{def:Y}, the Stieltjes transform $m_N(z)$ and the quantity $Q(z)$ from \eqref{e:defQ}, In the following proposition, we collect some estimates of $m_N(z)$ and $Q(z)$ from \cite[Corollary 3.6 and Corollary C.3]{huang2024ramanujan}. 

By taking $t=0$, in \cite[Corollary C.3]{huang2024ramanujan}, then $Q_0(z)=Q(z)$, $m_0(z)=m_N(z)$, $Y_0(z)=Y_\ell(Q(z),z)$, $X_0(z)=X_\ell(Q(z),z)$, and 
\begin{align*}
\Phi(z)=\frac{\Im[m_N(z)]}{N\Im[z]}+\frac{1}{N^{1-2\fc}},\quad \Upsilon(z)=|1-\del_1 Y_\ell(Q,z)|+(d-1)^{8\ell}\Phi(z),\quad
\widetilde \Upsilon(z)=|1-\del_1 Y_\ell(Q,z)|.
\end{align*}
\begin{proposition}[{\cite[Corollary 3.6 and Corollary C.3]{huang2024ramanujan}}]\label{c:rigidity}
We recall the parameters $\fo\ll \ell/\log_{d-1}N\ll \fb\ll \fc\ll \fg$ from \eqref{e:parameters}.
Conditioned on $\cG\in \Omega$ (recall from \Cref{thm:prevthm0}), with overwhelmingly high probability the following holds:
For any $z\in \bC^+$ such that $\Im[z]\geq N^{-1+\fg}$ and $|z-2|\leq N^{-\fg}$, we denote $\eta=\Im[z]$ and $\kappa=|\Re[z]-2|$, then the following holds
 \begin{align}\label{e:equation_est}
&|m_N (z)-X_\ell(Q(z), z)|\lesssim  \frac{N^{2\fc}(\kappa+\eta)^{1/4}}{N\eta}+\frac{N^{6\fc}}{(N\eta)^{3/2}},\\
\label{e:Qm_est}
&|Q (z)-\msc (z)|, |m_N(z)-\md(z)|\lesssim  \frac{N^{8\fo}}{N\eta},
\end{align}
and 
\begin{align}\label{e:QYQbound2}
    &\bE[\bm1(\GG\in \Omega)|Q(z) -Y_\ell(Q(z),z) |^{2p}]
\lesssim \bE\left[\bm1(\GG\in \Omega)\left((\ell \Phi(z))^{2p}+\left(\frac{\ell \widetilde \Upsilon(z) \Phi(z)}{N\eta}\right)^p+\left(\frac{\Upsilon(z) \Phi(z)}{(d-1)^{\ell/4}N\eta}\right)^{p}\right)\right].
\end{align}

\end{proposition}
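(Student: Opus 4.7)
The plan is to exploit the exchangeable pair $(\cG, \wt\cG)$ produced by local resampling (\Cref{lem:exchangeablepair}) to derive a self-consistent equation for $Q(z)$, and then solve it perturbatively using \Cref{p:recurbound}. Fix a directed edge $(j,o)$ of $\cG \in \Omega$ and resample around $(i,o)$ with radius $\ell$. On the typical event $\{\bfS \in \sfF(\cG)\}$ of \Cref{lem:configuration}, the ball $\cT := \cB_\ell(o,\wt\cG)$ is a tree, $\wt\cG \in \oOmega$, and the resampled endpoints $\{b_\al\}$ are mutually $3\fR$-separated with tree $\fR$-neighborhoods. A Schur complement that removes the complement of $\cT$ from $\wt H - z$ expresses $\wt G^{(j)}_{oo}(z)$ as the $(o,o)$ entry of the inverse of an operator of the form $-z + A(\cT^{(j)})/\sqrt{d-1} - \mathcal{E}(z)$ acting on $\bT \setminus \{j\}$, where the boundary self-energy $\mathcal{E}(z)$ is supported on $\{l_\al\}$ with entries $\mathcal{E}_{l_\al l_\beta}(z) = (d-1)^{-1} G^{(\bT)}_{b_\al b_\beta}(z)$. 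Here I used $\wt H^{(\bT)} = H^{(\bT)}$ and that, after the switch, the only edges from $\bT$ to $\bT^c$ in $\wt\cG$ are $\{l_\al, b_\al\}$. By \eqref{eq:local_law} the off-diagonal entries of $\mathcal{E}$ decay exponentially because $\dist(b_\al, b_\beta) \geq 3\fR$, so $\mathcal{E}$ is effectively diagonal and matches the weight structure of \eqref{e:defP} applied to the $(d-1)$-ary tree $\cB_\ell(o, \cY_d)$. Identifying with \eqref{def:Y} gives $\wt G^{(j)}_{oo}(z) \approx Y_\ell(\widehat\Delta(z), z)$, and a further application of \eqref{eq:local_law} combined with the near-uniform distribution of $b_\al$ over $\qq{N}$ replaces $\widehat\Delta(z)$ by $Q(z)$. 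Averaging over directed edges and using vertex transitivity gives $\wt Q(z) \approx Y_\ell(Q(z), z)$; a parallel computation that keeps $j$ yields $\wt G_{oo}(z) \approx X_\ell(Q(z), z)$.

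The higher-moment bound \eqref{e:QYQbound2} is obtained by repeating the above with $|Q - Y_\ell(Q,z)|^{2p}$. Three error sources arise: (i) the Taylor remainder of $Y_\ell$ at second order yields the $(\ell \Phi(z))^{2p}$ term via $|\del_1^2 Y_\ell| \lesssim \ell$ from \eqref{e:Yl_derivative} and the Ward-identity bound $|G^{(\bT)}_{b_\al b_\al} - Q| = \OO(\Phi(z)^{1/2})$; (ii) a linear fluctuation term weighted by the stability coefficient $\widetilde\Upsilon(z) = |1 - \del_1 Y_\ell(Q,z)|$, which survives after the exchangeability cancellation of the mean; and (iii) a deterministic bias suppressed by the off-diagonal tree-expansion factor $(d-1)^{-\ell/4}$ and weighted by $\Upsilon(z)$. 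These three contributions match exactly the right-hand side of \eqref{e:QYQbound2}. With \eqref{e:QYQbound2} in hand, the estimates \eqref{e:Qm_est} follow by bootstrap: a weak a priori bound $|Q - \msc| \lesssim N^{-\fb/2}$ from \eqref{eq:infbound0} combined with Chebyshev applied to \eqref{e:QYQbound2} and the expansion from \eqref{e:recurbound} gives
\begin{align*}
(1 - \msc^{2\ell+2}(z))\,(Q(z) - \msc(z)) = \OO\bke{N^{8\fo}/(N\eta) + \ell\,|Q(z) - \msc(z)|^2}
\end{align*}
with overwhelming probability; since $|\msc(z)|$ is bounded away from $1$ on our spectral domain and $\ell$ is large, the prefactor is of order $1$, and iterating self-improves to $|Q - \msc| \lesssim N^{8\fo}/(N\eta)$. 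The bound on $|m_N - \md|$ then follows from $m_N \approx X_\ell(Q, z)$ via \eqref{e:Xrecurbound} and $X_\ell(\msc, z) = \md$. The refined \eqref{e:equation_est} comes from retaining the quadratic term in the expansion of $X_\ell$ and applying fluctuation averaging to the boundary self-energy contributions, which produces the $(\kappa + \eta)^{1/4}/(N\eta)$ and $(N\eta)^{-3/2}$ scalings characteristic of optimal local laws at the edge.

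The main obstacle is the near-edge regime $z \to 2$. One must choose the resampling radius $\ell$ large enough that $|\msc^{2\ell+2}(z)| \ll 1$ (so the linearized self-consistent operator is a contraction and the stability gap $\widetilde\Upsilon$ stays of order $1$), yet small enough that $(d-1)^\ell \leq N^{\fb/2}$ keeps both the resampling error \eqref{e:tmmdiff} and the exponentially small off-diagonal terms in $\mathcal{E}$ subleading. This delicate balance is precisely what the hierarchy $\fo \ll \ell/\log_{d-1} N \ll \fb \ll \fc \ll \fg$ of \eqref{e:parameters} enforces. A secondary technical hurdle is the combinatorial bookkeeping in the $2p$-th moment expansion: naive expansion of $|Q - Y_\ell(Q,z)|^{2p}$ produces many cross terms, and organizing them via a cumulant/cluster decomposition so that only the three indicated error terms survive, without accumulating factors of $(d-1)^\ell$ that would defeat the argument, requires careful tracking of which $b_\al$'s are paired and which fluctuate independently.
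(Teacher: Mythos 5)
This proposition is imported verbatim from \cite[Corollary~3.6 and Corollary~C.3]{huang2024ramanujan}; the present paper gives no proof of it, so there is no ``paper's own proof'' to compare against. Your high-level reconstruction---local resampling to generate an exchangeable pair, a Schur complement onto the resampled ball to produce a self-consistent equation $Q \approx Y_\ell(Q,z)$ and $m_N \approx X_\ell(Q,z)$, then a moment bound via the expansion in \Cref{p:recurbound}---is the right framework and matches what the cited paper does.

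However, there is a genuine gap in the final bootstrap. You write that ``$|\msc(z)|$ is bounded away from $1$ on our spectral domain and $\ell$ is large, the prefactor is of order $1$'' and that the stability gap $\widetilde\Upsilon$ ``stays of order $1$.'' Both claims fail precisely where this proposition is needed. The spectral domain $|z-2|\leq N^{-\fg}$ lies at the edge, where $\msc(z) = -1 + \sqrt{z-2} + \OO(|z-2|)$, so $\msc(z)\to -1$ and $\msc^{2\ell+2}(z)\to 1$. Quantitatively, $1 - \msc^{2\ell+2}(z) \asymp \ell\sqrt{\kappa+\eta}$ (cf.\ the lines following \eqref{e:QYQbound2}, where $\widetilde\Upsilon \lesssim \ell(\sqrt{\kappa+\eta}+|Q-\msc|)$), and with $\ell\asymp\log_{d-1} N$ and $\sqrt{\kappa+\eta}\lesssim N^{-\fg/2}$ this prefactor tends to zero, not to a constant. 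Choosing $\ell$ larger cannot rescue the contraction: one would need $\ell\sqrt{\kappa+\eta}\gtrsim 1$, i.e.\ $\ell\gtrsim N^{\fg/2}$, which destroys the constraint $(d-1)^\ell\leq N^{\fb/2}$ that keeps the resampling error subleading. So the linearized self-consistent equation $(1-\msc^{2\ell+2})(Q-\msc)=\OO(\cdots)$ does not close by dividing through.

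The correct resolution, which the present paper itself carries out later in the proof of \Cref{p:m-mdmoment}, is to keep the quadratic term and solve $(Q-\msc)(Q-\msc+b)=\delta$ with $|b|\asymp\sqrt{\kappa+\eta}$ small: one uses $\Im[Q]\geq 0$ together with the sign of $\Im[b]$ to determine which root is selected, and the bound $|Q-\msc|\lesssim |\delta|/\sqrt{|\delta|+\kappa+\eta}$ then yields the claimed estimates on the domain $N\eta\sqrt{\kappa+\eta}\geq\fC$. Your sketch identifies the correct self-consistent equation and the correct error hierarchy, but without the quadratic (square-root stability) analysis the argument does not yield \eqref{e:Qm_est} or \eqref{e:equation_est} at the edge.
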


\section{Gaussian Waves and Edge Eigenvectors}\label{s:Gaussianwave}

We prove our main result \Cref{t:ev_universality} in this section. 
Fix a $d$-regular graph $\cG\in \Omega$, and an edge $\{i,o\}$ in $\cG$. Since $\Omega \subset \oOmega$ (recall from \eqref{def:omegabar}), the number of vertices in $\cG$ that do not have a tree neighborhood of radius $\fR$ is at most $N^\fc$. By the permutation invariance of vertices, with probability at least $1-\OO(N^{-1+2\fc})$, for any vertex $v\in \cB_\ell(o, \cG)$, $\cB_\fR(v,\cG)$ is a tree.

We now consider the local resampling around $(i,o)$, with resampling data $\bfS=\{(l_\al, a_\al), (b_\al, c_\al)\}_{\al\in \qq{\mu}}$. Let $\cT=\cB_\ell(o,\cG)$ with vertex set $\bT$, and denote the switched graph by $\wt \cG=T_\bfS(\cG)$. By \Cref{lem:exchangeablepair}, $\cG$ and $\wt \cG$ form an exchangeable pair. Define $\cF=(i,o)\cup \{(b_\al, c_\al)\}_{\al\in \qq{\mu}}$. By \Cref{lem:configuration} and the discussion above, conditioned on $\cG\in \Omega$, the following holds with probability at least $1-\OO(N^{-1+2\fc})$: $I(\cF,\cG)=1$ (recall from \Cref{def:indicator}) and $\wt \cG\in \oOmega$ . Furthermore, we have
\begin{align*}
\bP(\wt \cG\not\in \Omega, \cG\in \Omega)
&=\bP(\wt \cG\not\in \oOmega, \cG\in \Omega)+\bP(\wt \cG\in \oOmega\setminus\Omega, \cG\in \Omega)\lesssim N^{-1+2\fc}+\bP(\wt \cG\in \oOmega\setminus\Omega)\lesssim N^{-1+2\fc}.
\end{align*}
Thus, we can assume that $\cG, \wt \cG\in \Omega$, and that
$
I(\cF, \cG)=1
$, which holds with probability at least $1-\OO(N^{-1+2\fc})$:
\begin{align}\label{e:theset}
\bP(\cG,\tcG\in \Omega, I(\cF,\cG)=1)=1-\OO(N^{-1+2\fc}).
\end{align}

\subsection{Green's Function of Switched Graphs}\label{sec:schurlemmas}

Assume that $I(\cF, \cG)=1$, then $\cT=\cB_\ell(o,\cG)$ is a truncated $d$-regular tree at level $\ell$. Let $\bT$ be the vertex set of $\cal T$. We denote the diagonal matrix $\bI^\partial$ indexed by $\bT\times \bT$, such that for any $x,y\in \bT$, $\bI^\del_{xy}=\delta_{xy} \bm1(\dist_\cT(o,x)=\ell)$. We recall from \eqref{e:defP} the following matrix on $\bT\times \bT$ (we notice that in \eqref{e:defP} $(d-D(\cT))/(d-1)=\bI^\partial$)
\begin{align}\label{e:copyP}
P(\cT, z, \msc(z))=\frac{1}{H_{\bT}-z-\msc(z)\bI^{\del} },
\end{align}
is explicitly given by the Green's function of the infinite $d$-regular tree \eqref{e:Gtreemkm}. 

We introduce the following matrix on $\bT\times \bT$, by setting $z=2$ in \eqref{e:copyP}
\begin{align*}
P=P(\cT, 2, \msc(2))=\frac{1}{H_{\bT}-2-\msc(2)\bI^{\del} }, 
\end{align*}
Then $P$ is a real matrix, and agrees with the Green's function on the infinite $d$-regular tree (recall from \eqref{e:Gtreemkm}) .  Explicitly, for $i,j\in \bT$, we have
\begin{align}\begin{split}\label{e:Pexp}
&P_{ij}(\cT,z,\msc(z))=\md(z)\left(-\frac{\msc(z)}{\sqrt{d-1}}\right)^{\dist_\cG(i,j)},\\
&P_{ij}=\md(2)\left(-\frac{\msc(2)}{\sqrt{d-1}}\right)^{\dist_\cG(i,j)}
=-\frac{d-1}{d-2}\left(\frac{1}{\sqrt{d-1}}\right)^{\dist_\cG(i,j)}.
\end{split}\end{align}
Using \eqref{e:mscmd}, near the spectral edge $2$,  $P$ is a good approximation of $P_{ij}(\cT, z, \msc(z))$, 
\begin{align}\label{e:PPdiff}
 |P_{ij}(\cT, z, \msc(z))-P_{ij}|\lesssim \ell\sqrt{|z-2|}.
\end{align}

The following proposition provides leading order terms for  the Green's function near the spectral edge.

\begin{proposition}\label{lem:diaglem}
Fix a $d$-regular graph $\cG\in \Omega$, a radius $r\geq 0$, and an edge $\{i,o\}$ in $\cG$.
We consider the local resampling around $(i,o)$, with resampling data $\bfS=\{(l_\al, a_\al), (b_\al, c_\al)\}_{\al\in \qq{\mu}}$. Let $\cT=\cB_\ell(o,\cG)$ with vertex set $\bT$, and denote the switched graph as $\wt \cG=T_\bfS(\cG)$. Define the set $\cF=(i,o)\cup \{(b_\al, c_\al)\}_{\al\in \qq{\mu}}$, and let $z=2+w/(\cA N)^{2/3}$. The following holds  with overwhelmingly high probability conditioned on $\cG, \wt \cG\in \Omega$, and 
$
I(\cF, \cG)=1$  (recall from \Cref{def:indicator}),
\begin{align}\begin{split}\label{e:G-Y}
  \Im[\widetilde G_{ij}(z)]
   &= \sum_{\al,\beta\in\qq{\mu}}\frac{P_{il_\al}P_{jl_\beta} }{d-1}\Im\left[  G_{c_\al c_\beta}(z)-\frac{G_{b_\al c_\beta}(z)}{\sqrt{d-1}}-\frac{G_{c_\al b_\beta}(z)}{\sqrt{d-1}} +\frac{G_{b_\al b_\beta}(z)}{d-1}\right]+\OO(N^{-\frac{1}{3}-\frac{\fb}{4}}),
\end{split}\end{align}
where the error bound is uniform for $w$ in any compact subset of $\bC^+$, and $i,j\in \bS$, the vertex set of $\cB_r(o,\cG)$.  
\end{proposition}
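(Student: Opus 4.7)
The plan is to follow the Schur-complement framework sketched for Wigner matrices in the preceding subsection, adapted to the $d$-regular graph setting. Since the switching leaves internal edges of $\bT$ unchanged, $\wt H|_\bT = H|_\bT$, and the Schur complement yields
\begin{equation*}
\wt G|_\bT(z) = \bigl(H|_\bT - z - M(z)\bigr)^{-1}, \qquad M(z) := \wt H_{\bT\bC}\, \wt G^{(\bT)}(z)\, \wt H_{\bC\bT}, \qquad \bC := \qq{N}\setminus \bT.
\end{equation*}
In $\wt\cG$ the new boundary edges run from $l_\al$ to $c_\al$ with weight $1/\sqrt{d-1}$, so $M_{ll'}(z) = (d-1)^{-1}\sum_{\al:\,l_\al=l,\ \beta:\,l_\beta=l'} \wt G^{(\bT)}_{c_\al c_\beta}(z)$.

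Writing $P(\cT,z,\msc(z))^{-1} = H|_\bT - z - \msc(z)\bI^\del$, the resolvent identity gives
\begin{equation*}
\wt G|_\bT(z) = P(\cT,z,\msc(z)) + P(\cT,z,\msc(z))\bigl(M(z)-\msc(z)\bI^\del\bigr)P(\cT,z,\msc(z)) + (\textrm{second order}).
\end{equation*}
For $z=2+w/(\cA N)^{2/3}$, \eqref{e:PPdiff} lets me replace $P(\cT,z,\msc(z))$ by the real matrix $P$ with error $O(\ell\, N^{-1/3})$. Taking imaginary parts (using that $P$ is real, so $\Im[PAP]=P\,\Im[A]\,P$), and applying the identity $\Im[P_{ij}(\cT,z,\msc(z))] = \Im[\msc(z)]\,(P\bI^\del P)_{ij} + O(N^{-2/3})$, which follows from $\partial_\Delta P(\cT,z,\Delta) = P\bI^\del P$ together with \eqref{e:mscmd}, the leading contributions cancel and
\begin{equation*}
\Im[\wt G_{ij}(z)] = \sum_{\al,\beta}\frac{P_{il_\al}P_{jl_\beta}}{d-1}\,\Im\bigl[\wt G^{(\bT)}_{c_\al c_\beta}(z)\bigr] + O(N^{-2/3+\oo(1)}).
\end{equation*}

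The decisive step is to replace $\wt G^{(\bT)}_{c_\al c_\beta}(z)$ by $\langle\xi_\al, G(z)\,\xi_\beta\rangle$ with $\xi_\al := e_{c_\al} - e_{b_\al}/\sqrt{d-1}$; this bracket expands to exactly the four-term expression in the statement. The crucial observation is that since the switching only modifies edges incident to $\bB := \bT\cup\{b_\al\}_{\al\in\qq{\mu}}$, we have $\wt\cG^{(\bB)} = \cG^{(\bB)}$, hence $\wt G^{(\bB)} = G^{(\bB)}$. A second Schur complement eliminating $\bW := \{b_\al\}_{\al\in\qq{\mu}}$ from $\wt\cG^{(\bT)}$, and $\bB$ from $\cG$, represents both $\wt G^{(\bT)}_{c_\al c_\beta}$ and $\langle\xi_\al, G\,\xi_\beta\rangle$ as the same linear combination of entries of the common $G^{(\bB)}$ supported on the vertex set $\{b_\al, c_\al, a_\al, d_\al^{(1)},\ldots,d_\al^{(d-1)}\}$, where $d_\al^{(k)}$ denote the remaining neighbors of $b_\al$ in $\cG$. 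The isolation from $I(\cF,\cG)=1$, together with the tree approximation \eqref{eq:local_law}, makes both sides agree up to errors in the Green's function approximation. As a sanity check at $z=2$, inserting the tree values $G_{b_\al b_\al}\approx\md(2)=-(d-1)/(d-2)$, $G_{b_\al c_\al}\approx -\md(2)\msc(2)/\sqrt{d-1}$, and $G_{c_\al c_\al}\approx\md(2)$ gives $\langle\xi_\al, G\,\xi_\al\rangle\approx \msc(2)=-1\approx \wt G^{(\bT)}_{c_\al c_\al}(2)$, as required.

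The principal obstacle is executing this last matching quantitatively while tracking the rank-$O(\mu)$ perturbation between $\cG$ and $\wt\cG$. Error bookkeeping uses (i) the Green's function entry bounds \eqref{eq:local_law} and the imaginary-part bound \eqref{e:Gest}; (ii) the exponential decay $|P_{il_\al}|\lesssim(d-1)^{-\dist(i,l_\al)/2}$, which yields $\sum_\al |P_{il_\al}|^2 = O(1)$; and (iii) the fact that for $\al\neq\beta$ the vertices $b_\al, c_\al, b_\beta, c_\beta$ are at graph distance $\geq 3\fR$ in $\cG$, so the corresponding Green's function cross-terms decay as $(d-1)^{-3\fR/2} = N^{-3\fc/8}$ and are negligible. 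Only the $O(\mu)$ diagonal $\al=\beta$ contributions remain, and after summing against the weights $P_{il_\al}P_{jl_\al}/(d-1)$ everything aggregates to the claimed error $\OO(N^{-1/3-\fb/4})$, uniformly for $w$ in compact subsets of $\bC^+$ and $i,j\in\bS$.
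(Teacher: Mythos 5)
Your high-level skeleton matches the paper's: Schur-complement $\wt G|_\bT(z)$ against the boundary, exploit that the reference matrix is real to pass to imaginary parts, and then replace $\Im[\wt G^{(\bT)}_{c_\al c_\beta}]$ by the four-term combination of $\Im[G]$-entries. Your first part (the resolvent expansion around $P(\cT,z,\msc(z))$ with the explicit cancellation between $\Im[P(\cT,z,\msc(z))]$ and $-\Im[\msc(z)]\,P\bI^\del P$) is sound, though the paper's bookkeeping is cleaner — it absorbs both the $z-2$ shift and the complex boundary condition into $\cD(z)$ so that the base matrix $H_\bT-2-\msc(2)\bI^\del$ is exactly real, and then applies $\Im[A^{-1}]=-A^{-1}\Im[A]\overline{A^{-1}}$ once. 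Note also that your intermediate error claim $\OO(N^{-2/3+\oo(1)})$ is too optimistic: the deviation $\wt G|_\bT-P(\cT,z,\msc(z))=\OO(N^{-\fb})$ (not $\OO(N^{-1/3})$) dominates, giving $\OO(N^{-1/3-\fb+\oo(1)})$ as in the paper's \eqref{e:ImtG}; this doesn't matter for the final $\OO(N^{-1/3-\fb/4})$.

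The genuine gap is in what you call the decisive step. You correctly observe that $\wt\cG^{(\bB)}=\cG^{(\bB)}$ for $\bB=\bT\cup\{b_\al\}_\al$ — the paper also uses this, implicitly, via $\wt G^{(\bT\bW)}=G^{(\bT\bW)}$ in the proof of \Cref{l:diffG1}. But the assertion that Schur-complementing $\bW$ out of $\wt\cG^{(\bT)}$ and Schur-complementing $\bB$ out of $\cG$ represent $\wt G^{(\bT)}_{c_\al c_\beta}$ and $\langle\xi_\al,G\xi_\beta\rangle$ as \emph{the same} linear combination of $G^{(\bB)}$-entries is not established and is not literally true: the two Schur corrections involve different coupling matrices, because in $\wt\cG^{(\bT)}$ the neighbors of $b_\al$ are $\{a_\al\}\cup\{d_\al^{(k)}\}$ while in $\cG$ they are $\{c_\al\}\cup\{d_\al^{(k)}\}$, and for the second computation you must also resolve $(G|_\bB)^{-1}$ including the $\bT$-block. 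Your $z=2$ sanity check confirms that the leading deterministic values match (which is the content driving the paper's \Cref{l:ImGG}), but it does not control the fluctuation parts whose imaginary parts are the objects of interest. In the paper this is a substantive two-lemma argument: \Cref{l:diffG1} passes from $\Im[\wt G^{(\bT)}_{c_\al c_\beta}]$ to $\Im[G^{(b_\al b_\beta)}_{c_\al c_\beta}]$ via three Schur-complement steps (remove $\bW$, reattach $\bW\setminus\{b_\al,b_\beta\}$, reattach $\bT$), each controlled by \eqref{eq:local_law}, \eqref{e:Gest} and the $\Im[A^{-1}]$ identity; \Cref{l:ImGG} then opens up $G^{(b_\al b_\beta)}_{c_\al c_\beta}$ by a two-vertex Schur calculation and uses that $G_{bb},G_{bc}$ concentrate at the \emph{real} tree values $\md(2)$, $\md(2)/\sqrt{d-1}$. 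Without an analogue of these computations your proposal reduces the proposition to an unverified claim rather than proving it.
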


We denote the Green's functions of $\cG^{(\bT)}$ and $\wt\cG^{(\bT)}$ as $ G^{(\bT)}(z)$ and $\wt G^{(\bT)}(z)$,  respectively.  
Before proving \Cref{lem:diaglem}, we analyze  the error introduced by replacing $\wt G^{(\bT)}_{c_\al c_\beta}(z)$ with $ G_{c_\al c_\beta}^{(b_\al b_\beta)}(z)$. Additionally, we present a lemma, which will be used to estimate $G_{c_\al c_\beta}^{(b_\al b_\beta)}(z)$.

\begin{lemma}\label{l:diffG1}
Adopt the notation and assumptions of \Cref{lem:diaglem}.  Let $z=2+w/(\cA N)^{2/3}$.  The following holds  with overwhelmingly high probability conditioned on $\cG, \wt \cG\in \Omega$, and 
$
I(\cF, \cG)=1$  (recall from \Cref{def:indicator}),
    \begin{align*}
   &\Im[ \wt G^{(\bT)}_{c_\al c_\beta}(z)-G^{(b_\al b_\beta)}_{c_\al c_\beta}(z)]\lesssim N^{-1/3-\fb/2}.
    \end{align*}
   where the error bound is uniform for any indices $\al,\beta\in\qq{\mu}$, and $w$ in any compact subset of $\bC^+$. 
\end{lemma}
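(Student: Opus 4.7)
The strategy is to reduce both Green's function entries $\wt G^{(\bT)}_{c_\al c_\beta}(z)$ and $G^{(b_\al b_\beta)}_{c_\al c_\beta}(z)$ to a common reference Green's function, then control the difference of the resulting Schur correction terms. Setting $\bW = \{b_\gamma\}_{\gamma \in \qq{\mu}}$, I first observe that $\cG^{(\bT \cup \bW)} = \wt \cG^{(\bT \cup \bW)}$ as graphs---removing $\bT \cup \bW$ eliminates every vertex whose incident edges are altered by the switching---so in particular $\wt G^{(\bT \cup \bW)}_{c_\al c_\beta}(z) = G^{(\bT \cup \bW)}_{c_\al c_\beta}(z)$, providing the shared reference term.

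Applying the Schur complement identity over $\bW$ to $\wt G^{(\bT)}$ and over $\bT \cup \bW'$ (with $\bW' = \bW \setminus \{b_\al, b_\beta\}$) to $G^{(b_\al b_\beta)}$ yields
\begin{align*}
\wt G^{(\bT)}_{c_\al c_\beta}(z) &= \wt G^{(\bT \cup \bW)}_{c_\al c_\beta}(z) + \sum_{\gamma, \delta \in \qq{\mu}} \wt G^{(\bT)}_{c_\al b_\gamma}(z) \bigl[(\wt G^{(\bT)}|_\bW)^{-1}\bigr]_{b_\gamma b_\delta} \wt G^{(\bT)}_{b_\delta c_\beta}(z), \\
G^{(b_\al b_\beta)}_{c_\al c_\beta}(z) &= G^{(\bT \cup \bW)}_{c_\al c_\beta}(z) + \text{(analogous Schur correction over $\bT \cup \bW'$)}.
\end{align*}
After the common reference terms cancel, I would approximate every Green's function entry appearing in the two Schur corrections by its explicit tree-$P$-expression from \eqref{eq:local_law}, using the explicit formula \eqref{e:Pexp}. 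The key structural observation is that the radius-$\fR$ neighborhood of $c_\al$ (and of $c_\beta$) coincides in $\wt \cG^{(\bT)}$ and $\cG^{(b_\al b_\beta)}$: in both graphs $c_\al$ retains precisely its $d-1$ original $\cG$-neighbors other than $b_\al$, and this neighborhood is a tree by the assumption $I(\cF, \cG) = 1$. Consequently the leading $P$-approximations of the two Schur corrections match, and the remainder is a subleading error.

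The main obstacle is achieving the sharp imaginary-part bound $N^{-1/3-\fb/2}$ rather than the $N^{-\fb}$ that would follow from a naive application of \eqref{eq:local_law}. The extra $N^{-1/3}$ factor must be extracted from $\Im[\msc(z)] \asymp N^{-1/3}$ at the spectral edge (by \eqref{e:imm_behavior} evaluated at $z = 2 + w/(\cA N)^{2/3}$), since imaginary contributions to the $P$-expressions carry this weight while real contributions do not. To propagate this factor I would take imaginary parts via the identity $\Im[A^{-1}] = -A^{-1} \Im[A] \overline{A^{-1}}$ applied to the Schur blocks $(\wt G^{(\bT)}|_\bW)^{-1}$ and its analogue, combine with the geometric decay $|P_{xy}| \lesssim (d-1)^{-\dist(x,y)/2}$, and exploit the distance bounds $\dist_\cG(c_\al, \bT) \geq 3\fR - \ell$ and $\dist_\cG(\{b_\al, c_\al\}, \{b_\beta, c_\beta\}) \geq 3\fR$ supplied by \Cref{lem:configuration}, so that contributions from vertices far from $c_\al, c_\beta$ are suppressed by $(d-1)^{-\fR}$ and absorbed into the error. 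The delicate accounting across the two Schur corrections on different index sets ($\bW$ versus $\bT \cup \bW'$) is the principal technical difficulty.
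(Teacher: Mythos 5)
Your overall strategy---reduce both quantities to the common reference $\wt G^{(\bT\cup\bW)}_{c_\al c_\beta}=G^{(\bT\cup\bW)}_{c_\al c_\beta}$ via Schur complement, then control the two correction terms---does match the paper's proof (which performs the second Schur step in two substeps, first restoring $\bW\setminus\{b_\al,b_\beta\}$, then $\bT$, but this is a cosmetic difference). However, the mechanism you propose for extracting the crucial extra $N^{-1/3}$ factor has a genuine gap. You plan to replace each Green's-function entry by its tree-$P$-expression and then read off the bound $\Im[P_{xy}]\lesssim(d-1)^{-\dist(x,y)/2}\sqrt{|z-2|}\lesssim N^{-1/3}$. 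But the local law \eqref{eq:local_law} only gives $G_{xy}=P_{xy}+\OO(N^{-\fb})$, and the error term $\OO(N^{-\fb})$ comes with no control on its phase: its imaginary part may be as large as $N^{-\fb}$, which dwarfs $\Im[P_{xy}]\lesssim N^{-1/3}$ since $\fb\ll 1/3$. So approximating by $P$ and using $\Im[\msc]$ simply does not give $\Im[G_{xy}]\lesssim N^{-1/3+\OO(\fo)}$; the local-law error dominates.

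What makes the argument work in the paper is a separate input that you do not invoke: the bound $\max_{x,y\notin\bX}|\Im[G^{(\bX)}_{xy}(z)]|\lesssim N^\fo\Im[m_N(z)]$ from \eqref{e:Gest}, combined with $\Im[m_N(z)]\lesssim N^{-1/3+8\fo}$ near the edge from \eqref{e:Immbound}/\eqref{e:Qm_est}. These give a direct $N^{-1/3+10\fo}$ bound on the imaginary part of \emph{every} Green's-function entry (not via $P$), while the $N^{-\fb}$ decay from \eqref{eq:local_law} is used only for absolute values of off-diagonal entries. In each product of three Green's-function factors, applying $\Im[\cdot]$ produces three terms, each containing exactly one $\Im$-factor (bounded by $N^{-1/3+10\fo}$) and a far-off-diagonal factor (bounded by $N^{-\fb}$), yielding $N^{-1/3+10\fo-\fb}$ per term. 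Summing over $\mu^2\lesssim(d-1)^{2\ell}\ll N^{\fb/2}$ terms gives the claimed $N^{-1/3-\fb/2}$. Without \eqref{e:Gest} and \eqref{e:Immbound} your argument would only yield $\OO(N^{-\fb})$.

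A smaller point: your claim that ``the leading $P$-approximations of the two Schur corrections match'' is both unneeded and doubtful---the two corrections live on the different index sets $\bW$ and $\bT\cup\bW'$, and there is no obvious cancellation between them at the $P$-level. The paper avoids any such cancellation: it bounds each of the three (or, in your version, two) correction terms separately, each having small imaginary part on its own. The argument closes without comparing the corrections against each other.
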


\begin{lemma}\label{l:ImGG}Adopt the notation and assumptions of \Cref{lem:diaglem}. Let $z=2+w/(\cA N)^{2/3}$.   Then for any  $(c,b)\neq (c',b')\in \cF$, the following holds  with overwhelmingly high probability conditioned on $\cG, \wt \cG\in \Omega$, and 
$
I(\cF, \cG)=1$  (recall from \Cref{def:indicator}),
\begin{align}\label{e:Gbcc}
\Im[G_{cc}^{(b)}(z)]=\Im[G_{cc}(z)]-\frac{2\Im[G_{bc}(z)]}{\sqrt{d-1}}+\frac{\Im[G_{bb}(z)]}{d-1}+\OO(N^{-1/3-\fb/2}),
\end{align}
and 
\begin{align}\begin{split}\label{e:Gbbcc}
 \Im[G_{cc'}^{(bb')}(z)]&=   \Im[G_{cc'}(z)]-\frac{\Im[G_{bc'}(z)]}{\sqrt{d-1}}-\frac{\Im[G_{cb'}(z)]}{\sqrt{d-1}} +\frac{\Im[G_{bb'}(z)]}{d-1}  +\OO(N^{-1/3-\fb/2}),
\end{split}\end{align}
where the error bound is uniform for $w$ in any compact subset of $\bC^+$. 
\end{lemma}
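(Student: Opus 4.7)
The plan is to derive both identities by applying the Schur complement formula and then Taylor expanding around the tree Green's function approximation, tracking imaginary parts carefully. For the first identity, the Schur complement yields
\[
G^{(b)}_{cc} = G_{cc} - G_{bc}^2/G_{bb}.
\]
For the second, I would invert the $2\times 2$ submatrix $M = G|_{\{b,b'\}}$ using the cofactor formula and expand in the (small) off-diagonal entry $G_{bb'}$, obtaining
\[
G^{(bb')}_{cc'} = G_{cc'} - \frac{G_{cb}G_{bc'}}{G_{bb}} - \frac{G_{cb'}G_{b'c'}}{G_{b'b'}} + \frac{G_{bb'}(G_{cb}G_{b'c'} + G_{cb'}G_{bc'})}{G_{bb}G_{b'b'}} + \OO(|G_{bb'}|^2).
\]

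The main inputs are the local law \eqref{eq:local_law}, the tree Green's function formula \eqref{e:Pexp}, and the assumption $I(\cF, \cG) = 1$. Writing $P_{xy}^{(0)} \deq \md(z)(-\msc(z)/\sqrt{d-1})^{\dist_\cG(x,y)}$ for the Green's function on the infinite $d$-regular tree, the local law gives
\[
G_{bb}(z) = \md(z) + \OO(N^{-\fb}), \qquad G_{bc}(z) = -\md(z)\msc(z)/\sqrt{d-1} + \OO(N^{-\fb}),
\]
together with analogous expansions for $(b',c')$; the cross entries $G_{bb'}, G_{bc'}, G_{cb'}$ are all $\OO(N^{-\fb})$, because these pairs lie at graph distance at least $3\fR$ and the corresponding tree values vanish. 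Additionally, \eqref{e:Gest} yields $|\Im[G_{xy}(z)]| \lesssim N^{-1/3+\fo}$ at the edge scale $z = 2 + w/(\cA N)^{2/3}$. Substituting these into the Schur formulas and using $-\msc(z)/\sqrt{d-1} \to 1/\sqrt{d-1}$ as $z \to 2$, the leading terms produce the coefficients $1/\sqrt{d-1}$ in front of the single-index imaginary parts and $1/(d-1)$ in front of $\Im[G_{bb}]$ and $\Im[G_{bb'}]$ in the two claimed formulas; for the second identity, the $G_{bb'}/(d-1)$ contribution comes specifically from the cross term $G_{bb'}G_{cb}G_{b'c'}/(G_{bb}G_{b'b'})$ after using $G_{cb}/G_{bb}, G_{b'c'}/G_{b'b'} \to 1/\sqrt{d-1}$.

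The main obstacle is controlling the residual error to $\OO(N^{-1/3-\fb/2})$, since the naive Taylor expansion produces quadratic corrections of absolute size $\OO(N^{-2\fb})$. The key observation is that every quadratic cross term takes the form $\zeta_1\zeta_2$ with $|\Re[\zeta_j]| \lesssim N^{-\fb}$ but $|\Im[\zeta_j]| \lesssim N^{-1/3+\fo}$, so
\[
|\Im[\zeta_1\zeta_2]| = |\Re[\zeta_1]\Im[\zeta_2] + \Im[\zeta_1]\Re[\zeta_2]| \lesssim N^{-1/3-\fb+\fo},
\]
which beats the target since $\fo \ll \fb$ by \eqref{e:parameters}. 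A separate verification is needed for the purely deterministic discrepancy arising from replacing $\Im[(P_{bc}^{(0)})^2/P_{bb}^{(0)}]$ with $2\Im[P_{bc}^{(0)}]/\sqrt{d-1} - \Im[P_{bb}^{(0)}]/(d-1)$; using $\msc^2 + z\msc + 1 = 0$, this reduces algebraically to $\Im[\md(z)(\msc(z)+1)^2]/(d-1)$, and since $\msc(z)+1 = \sqrt{z-2} + \OO(|z-2|)$ near the edge, this deterministic term is $\OO(|z-2|) = \OO(N^{-2/3})$, well within the target.
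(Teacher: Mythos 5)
Your proof is correct and follows essentially the same route as the paper. Both reduce to the Schur complement identity $G^{(b)}_{cc} = G_{cc} - G_{bc}^2/G_{bb}$ and the $2\times 2$ analogue for $G^{(bb')}_{cc'}$, invoke \eqref{eq:local_law} and \eqref{e:Pexp} to pin $G_{bb}, G_{bc}$ to $-\tfrac{d-1}{d-2}, -\tfrac{\sqrt{d-1}}{d-2}$ up to $\OO(N^{-\fb})$ and the cross entries $G_{bb'}, G_{bc'}, G_{cb'}$ to $\OO(N^{-\fb})$, use \eqref{e:Gest}/\eqref{e:Immbound} to get $|\Im[G_{xy}]|\lesssim N^{-1/3+\OO(\fo)}$, and then exploit that $\Im$ of a product picks out exactly one imaginary-part factor so every residual term is $\OO(N^{-1/3+\OO(\fo)}\cdot N^{-\fb})\lesssim N^{-1/3-\fb/2}$. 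That last observation is precisely the mechanism the paper uses when it splits $\Im[G_{bc}^2/G_{bb}]$ into $\Re[\cdot]\Im[\cdot]$ products and, for the $(bb')$ case, discards any term with two or more factors from $\{G_{bb'}, G_{cc'}, G_{bc'}, G_{cb'}\}$. Two minor remarks: the $\OO(|G_{bb'}|^2)$ in your Schur expansion should be understood as a schematic magnitude bound for the leftover terms, each of which still needs the imaginary-part-of-a-product argument to land inside $\OO(N^{-1/3-\fb/2})$ (which you then supply); and your ``separate verification'' that $\Im[\md(\msc+1)^2]/(d-1) = \OO(|z-2|)$ is a consistent sanity check but is subsumed in the paper's direct substitution of the $z=2$ coefficient values, since $|z-2|\asymp N^{-2/3}\ll N^{-\fb}$ so the $z$-dependence of $\md(z), \msc(z)$ is already absorbed into the $\OO(N^{-\fb})$ coefficient errors.
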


\begin{proof}[Proof of \Cref{lem:diaglem}]
For any $x,y\in \bT$, we recall that $\bI^\del_{xy}=\delta_{xy} \bm1(\dist_\cT(o,x)=\ell)$.
By  the Schur complement formula \eqref{e:Schur1}, we have 
\begin{align}\label{e:HSchur_graph}
\widetilde G(z)=
\frac{1}{H_{\bT}-z-\widetilde  B^\top\widetilde G^{(\bT)}(z)\widetilde  B}
=\frac{1}{H_{\bT}-2-\msc(2)\bI^{\del} -\cD(z)},
\end{align}
where for $x,y\in \bT$,
\begin{align}\label{e:Dexp}
\cD_{xy}(z)=(z-2)\delta_{xy}+\sum_{\al, \beta: l_\al =x, l_\beta=y}\frac{\widetilde G_{c_\al c_\beta}^{(\bT)}}{d-1} - \msc(2)\bI^\del_{xy}.
\end{align}

Let  $z=2+w/(\cA N)^{2/3}$, conditioned on $\cG, \wt \cG\in \Omega$ and $I(\cF,\cG)=1$, the following holds with overwhelmingly high probability
\begin{align}\begin{split}\label{e:Immbound}
&\Im[m_N(z)]=\Im[\md(z)]+\OO\left(\frac{N^{8\fo}}{N\Im[z]}\right)\lesssim \sqrt{|z-2|}+\OO\left(\frac{N^{8\fo}}{N^{1/3}\Im[w]}\right)=\OO(N^{-1/3+8\fo}),\\
&\Im[\wt m_N(z)]\lesssim \Im[m_N(z)]+\frac{(d-1)^{\ell}N^\fo\Im[m_N(z)]}{N\Im[z]}\lesssim \Im[ m_N(z)]=\OO(N^{-1/3+8\fo}),
\end{split}\end{align}
where the first line follows from \eqref{e:mscmd} and \eqref{e:Qm_est}; and the second line follows from \eqref{e:tmmdiff}.
Moreover, conditioned on $\cG, \wt \cG\in \Omega$ and $I(\cF,\cG)=1$, for $i,j\in \bT$
it follows from \eqref{eq:local_law} and \eqref{e:PPdiff}, 
\begin{align}\label{e:cDbound}
\wt G_{ij}(z) =P_{ij}+(\wt G_{ij}(z) -P_{ij}(\cT,z,\msc(z)))+(P_{ij}(\cT,z,\msc(z))-P_{ij})
=P_{ij}+\OO(N^{-\fb}),
\end{align}
and it follows from \eqref{e:Gest} and \eqref{e:Immbound}
\begin{align}\label{e:ImcDbound}
\quad \Im[\cD_{ij}]\lesssim \Im[z]\delta_{ij}+\max_{x,y\not\in \bT}\Im[\wt G^{(\bT)}_{xy}]
\lesssim N^{-1/3}+ N^\fo \Im[\wt m_N(z)]\lesssim N^{-1/3+10\fo}.
\end{align}

    Thus, by taking imaginary part of both sides of \eqref{e:HSchur_graph}, we get the following holds with overwhelmingly high probability conditioned on $\cG, \wt \cG\in \Omega$ and $I(\cF,\cG)=1$,
\begin{align}\begin{split}\label{e:ImtG}
&\phantom{{}={}}\Im [\wt G_{ij}(z) ]
= -( \wt G|_{\bT}(z) \Im[H_{\bT}-z-\widetilde  B^\top\widetilde G^{(\bT)}(z) \widetilde  B]  \overline{\wt G|_{\bT}(z) })_{ij} \\
&= - \sum_{x,y\in \bT}(P_{ix}+\OO(N^{-\fb}))\Im[(H_{\bT}-2-\msc(2)\bI^{\del} -\cD(z))_{xy}]  (P_{yj}+\OO(N^{-\fb}))\\
&= \sum_{x,y\in \bT}(P_{ix}+\OO(N^{-\fb})\Im[\cD_{xy}(z)]  (P_{yj}+\OO(N^{-\fb})) \\
&=(P\Im[\cD(z) ] P)_{ij} +\OO\left(\sum_{x,y\in \bT} N^{-\fb} N^{-1/3+10\fo} |P_{yj}|+|P_{ix}| N^{-1/3+10\fo} N^{-\fb}+N^{-2\fb} N^{-1/3+10\fo}\right)\\
&=(P\Im[\cD(z) ] P)_{ij}+\OO((d-1)^{2\ell}N^{-1/3+10\fo-\fb})=(P\Im[\cD(z) ] P)_{ij}  +\OO(N^{-1/3-\fb/2}),
\end{split}\end{align}
where in the first statement we used \eqref{e:Ainverse}; in the second statement we used \eqref{e:cDbound}; in the third statement we used that $H|_\bT-2-\msc(2)\bI^\del$ is real; in the last two statements we used $|P_{ix}|, |P_{yj}|\lesssim 1$ from \eqref{e:Pexp}.

Thus we need to estimate $\Im[\cD_{xy}(z)]$. Recall from \eqref{e:Dexp}
\begin{align}\label{e:gu0}
\Im[\cD_{xy}(z) ]=\Im[z]\delta_{xy}+\sum_{\al, \beta: l_\al =x, l_\beta=y}\frac{1}{d-1}\Im[\widetilde G_{c_\al c_\beta}^{(\bT)}].
\end{align}
We recall from \Cref{l:diffG1}, with overwhelmingly high probability conditioned on $\cG, \wt \cG\in \Omega$, and 
$
I(\cF, \cG)=1$, the following holds
\begin{align}\label{e:gu1}
|\Im[\widetilde G^{(\bT)}_{c_\al c_\beta}(z) -G^{(b_\al b_\beta)}_{c_\al c_\beta}(z) ]|\lesssim N^{-1/3-\fb/2}.
\end{align}
By plugging \eqref{e:gu0} and \eqref{e:gu1} into \eqref{e:ImtG}, we get 
\begin{align}\label{e:gu2}
\Im[\wt G_{ij}(z)]=\frac{1}{d-1}\sum_{\al,\beta\in\qq{\mu}}P_{il_\al} \Im[(G_{c_\al c_\beta}^{(b_\al b_\beta)}(z) ] P_{jl_\beta}
+\OO\left(N^{-1/3-\fb/4}\right),
\end{align}
where the error term is from 
\begin{align*}
 \Im[z] |{ (P^2)_{ij}}|
+\frac{1}{d-1}\sum_{\al,\beta\in\qq{\mu}}N^{-1/3-\fb/2} |P_{il_\al } P_{jl_\beta}|\lesssim N^{-1/3-\fb/4},
\end{align*}
and we used that $(d-1)^{\ell}\leq N^{\fb/10}$.
We recall from \Cref{l:ImGG}, with overwhelmingly high probability conditioned on $\cG, \wt \cG\in \Omega$, and 
$
I(\cF, \cG)=1$, the following holds
\begin{align*}
\Im[G_{c_\al c_\beta}^{(b_\al b_\beta)}(z) ] &= \Im\left[  G_{c_\al c_\beta}(z)  -\frac{G_{b_\al c_\beta}(z)  }{\sqrt{d-1}}-\frac{G_{c_\al b_\beta}(z)  }{\sqrt{d-1}} +\frac{G_{b_\al b_\beta}(z)  }{d-1}\right] +\OO(N^{-1/3-\fb/2}),
\end{align*}
and 
we have
\begin{align}\begin{split}\label{e:gu3}
&\phantom{{}={}}\frac{1}{d-1}\sum_{\al,\beta\in\qq{\mu}}P_{il_\al}\Im[G_{c_\al c_\beta}^{(b_\al b_\beta)}(z) ]  P_{jl_\beta}\\
&=\frac{1}{d-1}\sum_{\al,\beta\in\qq{\mu}}P_{il_\al}\Im\left[  G_{c_\al c_\beta}(z)  -\frac{G_{b_\al c_\beta}(z)  }{\sqrt{d-1}}-\frac{G_{c_\al b_\beta}(z)  }{\sqrt{d-1}} +\frac{G_{b_\al b_\beta}(z)  }{d-1}\right]   P_{jl_\beta}+\OO(N^{-1/3-\fb/4}).
\end{split}\end{align}
The claim \eqref{e:G-Y} follows from combining \eqref{e:gu2} and \eqref{e:gu3}.

\end{proof}

\begin{proof}[Proof of \Cref{l:diffG1}]

For simplicity of notations, we omit the dependence on $z$. 
 The quantity $ G_{c_\al c_\beta}^{(b_\al b_\beta)}$ can be obtained from $\wt G^{(\bT)}_{c_\al c_\beta}$ through the following steps. First, we remove $\bW=\{b_1, b_2, \cdots, b_\mu\}$, which gives $G_{c_\al c_\beta}^{(\bT \bW)}$; we then add $\bW\setminus \{b_\al, b_\beta\}$ back, which gives $G_{c_\al c_\beta}^{(\bT b_\al b_\beta)}$; finally we add $\bT$ back, which gives $G_{c_\al c_\beta}^{(b_\al b_\beta)}$. The errors from these replacements are explicit, by the Schur complement formulas \eqref{e:Schur1}:
\begin{align}
\label{e:removeW}&\wt G^{(\bT)}_{c_\al c_\beta}-G^{(\bT\bW)}_{c_\al c_\beta}=(G^{(\bT\bW)}\wt B (\wt G^{(\bT)}|_{\bW})  \wt B^\top G^{(\bT\bW)})_{c_\al c_\beta},\\
 \label{e:addW}&G^{(\bT\bW)}_{c_\al c_\beta}-G^{(\bT b_\al b_\beta)}_{c_\al c_\beta}=-(G^{(\bT\bW)}(G^{(\bT b_\al b_\beta)}|_{\bW\setminus\{b_\alpha,b_\beta\}})^{-1}   G^{(\bT b_\al b_\beta)})_{c_\al c_\beta},\\
 \label{e:schur_removeT}
&G_{c_\al c_\beta}^{(b_\al b_\beta)}-G^{(\bT b_\al b_\beta)}_{c_\al c_\beta}=(G^{(b_\al b_\beta)}(G^{(b_\al b_\beta)}|_\bT)^{-1}G^{({\yukun\bT} b_\al b_\beta)})_{c_\al c_\beta},
\end{align}
where $\wt B$ is the restriction of $\wt H$ on the vertex sets $\qq{N}\setminus (\bT\bW)\times \bW$.

Explicitly the right-hand side of \eqref{e:removeW} is given by
\begin{align*}
&\mbox{I}:=\frac{1}{d-1}\sum_{\gamma,\gamma'\in \qq{\mu}}\sum_{x\in \cN_\gamma\atop y\in \cN_{\gamma'}}G_{c_\al x}^{(\bT\bW)} \wt G^{(\bT)}_{b_\gamma b_{\gamma'}}   G^{(\bT\bW)}_{yc_\beta },
\end{align*}
where $\cN_\gamma=\{x\neq c_\gamma: x\sim b_\gamma \text{ in }\cG\}\cup \{a_\gamma\}$.

Conditioned on $\cG, \tcG\in \Omega$ and $I(\cF,\cG)=1$, \eqref{eq:local_law} gives that $|\wt G^{(\bT)}_{b_\gamma b_{\gamma'}}|\lesssim 1$ for $\gamma, \gamma'\in \qq{\mu}$. And for $x\in \cN_\gamma$ and $y\in \cN_\gamma$, we have $|G_{c_\al x}^{(\bT\bW)}|,   |G^{(\bT\bW)}_{yc_\beta }|\lesssim N^{-\fb}$. Moreover, using \eqref{e:Gest} and \eqref{e:Immbound}, we also have that $\Im[G_{c_\al x}^{(\bT\bW)}], \Im[\wt G^{(\bT)}_{b_\gamma b_{\gamma'}}],    \Im[G^{(\bT\bW)}_{yc_\beta }]\lesssim N^{-1/3+10\fo}$ hold with overwhelmingly high probability conditioned on $\cG, \tcG\in \Omega$ and $I(\cF,\cG)=1$. Thus
\begin{align}\begin{split}\label{e:I_approx}
\Im[\mbox{I}]
&\lesssim \sum_{\gamma, \gamma'\in\qq{\mu}}
\Im[G_{c_\al x}^{(\bT\bW)}] |\wt G^{(\bT)}_{b_\gamma b_{\gamma'}}   G^{(\bT\bW)}_{yc_\beta }|+|G_{c_\al x}^{(\bT\bW)}|\Im[ \wt G^{(\bT)}_{b_\gamma b_{\gamma'}} ]  |G^{(\bT\bW)}_{yc_\beta }|+|G_{c_\al x}^{(\bT\bW)} \wt G^{(\bT)}_{b_\gamma b_{\gamma'}} |\Im[  G^{(\bT\bW)}_{yc_\beta }]\\
&\lesssim \sum_{\gamma, \gamma'\in\qq{\mu}} N^{-1/3+10\fo}N^{-\fb}\lesssim N^{-1/3-\fb/2}.
\end{split}\end{align}

For the RHS of \eqref{e:addW} we denote
\begin{align}\begin{split}\label{e:defcE1beta}
\mbox{II}:
=\sum_{\gamma, \gamma'\in\qq{\mu}\setminus\{\al,\beta\}}
G^{(\bT b_\al b_\beta)}_{c_\al b_\gamma}(G^{(\bT b_\al b_\beta)}|_{\bW\setminus\{b_\alpha,b_\beta\}})^{-1}_{b_\gamma b_{\gamma'}}   G^{(\bT b_\al b_\beta)}_{b_{\gamma'} c_\beta}.
\end{split}\end{align}
Conditioned on $\cG, \tcG\in \Omega$ and $I(\cF,\cG)=1$, \eqref{eq:local_law} gives that 
for $\gamma,\gamma'\in\qq{\mu}\setminus\{\al, \beta\}$, $|G^{(\bT b_\al b_\beta)}_{c_\al b_\gamma}|, | G^{(\bT b_\al b_\beta)}_{b_{\gamma'} c_\beta}|,
|G^{(\bT b_\al b_\beta)}_{b_\gamma b_{\gamma'}}-\md(z)\delta_{\gamma\gamma'}|\lesssim N^{-\fb}$. Thus 
$|(G^{(\bT b_\al b_\beta)}|_{\bW\setminus\{b_\alpha,b_\beta\}})^{-1}_{b_\gamma b_{\gamma'}}-\delta_{\gamma\gamma'}/\md(z) |\lesssim N^{-3\fb/4} $. Moreover, using \eqref{e:Gest} and \eqref{e:Immbound}, we also have that $\Im[
G^{(\bT b_\al b_\beta)}_{c_\al b_\gamma}], \Im[G^{(\bT b_\al b_\beta)}_{b_\gamma b_{\gamma'}}],\Im[G^{(\bT b_\al b_\beta)}_{b_{\gamma'} c_\beta}]\lesssim N^{-1/3+10\fo}$ hold with overwhelmingly high probability conditioned on $\cG, \tcG\in \Omega$ and $I(\cF,\cG)=1$. It follows from \eqref{e:Ainverse} that
\begin{align*}
&\phantom{{}={}}\Im[(G^{(\bT b_\al b_\beta)}|_{\bW\setminus\{b_\alpha,b_\beta\}})^{-1}_{b_\gamma b_{\gamma'}}  ]\\
&=- \left((G^{(\bT b_\al b_\beta)}|_{\bW\setminus\{b_\alpha,b_\beta\}})^{-1}\Im[G^{(\bT b_\al b_\beta)}|_{\bW\setminus\{b_\alpha,b_\beta\}}]\overline{(G^{(\bT b_\al b_\beta)}|_{\bW\setminus\{b_\alpha,b_\beta\}})^{-1}}\right)_{b_\gamma b_{\gamma'}}\lesssim N^{-1/3+10\fo}.
\end{align*} 
We conclude that with overwhelmingly high probability conditioned on $\cG, \tcG\in \Omega$ and $I(\cF,\cG)=1$ the following holds
\begin{align}\begin{split}\label{e:II_approx}
\Im[\mbox{II}]
&=\sum_{\gamma, \gamma'\in\qq{\mu}\setminus\{\al,\beta\}}
\Im[G^{(\bT b_\al b_\beta)}_{c_\al b_\gamma}]|(G^{(\bT b_\al b_\beta)}|_{\bW\setminus\{b_\alpha,b_\beta\}})^{-1}_{b_\gamma b_{\gamma'}}   G^{(\bT b_\al b_\beta)}_{b_{\gamma'} c_\beta}|\\
&+
\sum_{\gamma, \gamma'\in\qq{\mu}\setminus\{\al,\beta\}}|G^{(\bT b_\al b_\beta)}_{c_\al b_\gamma}|\Im[(G^{(\bT b_\al b_\beta)}|_{\bW\setminus\{b_\alpha,b_\beta\}})^{-1}_{b_\gamma b_{\gamma'}}]|   G^{(\bT b_\al b_\beta)}_{b_{\gamma'} c_\beta}|\\
&+\sum_{\gamma, \gamma'\in\qq{\mu}\setminus\{\al,\beta\}}+|G^{(\bT b_\al b_\beta)}_{c_\al b_\gamma} (G^{(\bT b_\al b_\beta)}|_{\bW\setminus\{b_\alpha,b_\beta\}})^{-1}_{b_\gamma b_{\gamma'}}]|\Im[   G^{(\bT b_\al b_\beta)}_{b_{\gamma'} c_\beta}]\\
&\lesssim \sum_{\gamma, \gamma'\in\qq{\mu}\setminus\{\al,\beta\}} N^{-1/3+10\fo}N^{-\fb}\lesssim N^{-1/3-\fb/2}.
\end{split}\end{align}

Finally we denote the right-hand side of \eqref{e:schur_removeT} as 
\begin{align}\label{e:schur_removeT2}
\mbox{III}
    =-\sum_{x,y\in \bT} G^{(b_\al b_\beta)}_{c_\al x}(G^{(b_\al b_\beta)}|_\bT)^{-1}_{xy}G^{({\yukun\bT} b_\al b_\beta)}_{y c_\beta}. 
\end{align}
Conditioned on $\cG, \tcG\in \Omega$ and $I(\cF,\cG)=1$, by \eqref{eq:local_law},  for $x,y\in \bT$, $|G^{(b_\al b_\beta)}_{c_\al x}|, |G^{({\yukun\bT}b_\al b_\beta)}_{y c_\beta}|, |G^{(b_\al b_\beta)}_{xy}-(H_\bT-z -\msc\mathbb{I}^{\del})^{-1}_{xy}|\lesssim N^{-\fb}$, where $\mathbb{I}^\del_{xy}=\bm1(\dist_\cG(o,x)=\ell)\delta_{xy}$. Thus we have
\begin{align*}
|(G^{(b_\al b_\beta)}|_\bT)_{xy}^{-1}-(H^{(b_\al b_\beta)}-z -\msc\mathbb{I}^{\del})_{xy}|\lesssim N^{-3\fb/4}, \text{ for }x,y\in \bT.
\end{align*}
Moreover, using \eqref{e:Gest} and \eqref{e:Immbound}, we also have that with overwhelmingly high probability conditioned on $\cG, \tcG\in \Omega$ and $I(\cF,\cG)=1$, for $x,y\in \bT$, $\Im[G^{(b_\al b_\beta)}_{c_\al x}], \Im[G^{(b_\al b_\beta)}_{xy}], \Im[G^{({\yukun\bT} b_\al b_\beta)}_{y c_\beta}]\lesssim N^{-1/3+10\fo}$. It follows from \eqref{e:Ainverse} that
\begin{align*}
\Im[(G^{(b_\al b_\beta)}|_\bT)^{-1}_{xy} ]=- \left((G^{(b_\al b_\beta)}|_\bT)^{-1}\Im[G^{(b_\al b_\beta)}|_\bT]\overline{(G^{(b_\al b_\beta)}|_\bT)^{-1}}\right)_{xy}\lesssim N^{-1/3+10\fo}.
\end{align*} 
We conclude that with overwhelmingly high probability conditioned on $\cG, \tcG\in \Omega$ and $I(\cF,\cG)=1$,  the following holds
\begin{align}\begin{split}\label{e:III_approx}
|\Im[\mbox{III}]|
&=\sum_{x,y\in \bT} \Im[G^{(b_\al b_\beta)}_{c_\al x}]|(G^{(b_\al b_\beta)}|_\bT)^{-1}_{xy}G^{({\yukun\bT}b_\al b_\beta)}_{y c_\beta}]|+\sum_{x,y\in \bT} |G^{(b_\al b_\beta)}_{c_\al x}]\Im[(G^{(b_\al b_\beta)}|_\bT)^{-1}_{xy}]|G^{({\yukun\bT}b_\al b_\beta)}_{y c_\beta}]|\\
&+\sum_{x,y\in \bT} |G^{(b_\al b_\beta)}_{c_\al x}]|(G^{(b_\al b_\beta)}|_\bT)^{-1}_{xy}|\Im[G^{({\yukun\bT}b_\al b_\beta)}_{y c_\beta}]|\lesssim \sum_{x, y\in\bT} N^{-1/3+10\fo}N^{-\fb}\lesssim N^{-1/3+\fb/2}.
\end{split}\end{align}
\Cref{l:diffG1} follows from combining \eqref{e:removeW} -- \eqref{e:I_approx}, \eqref{e:II_approx} and \eqref{e:III_approx}.

\end{proof}

\begin{proof}[Proof of \Cref{l:ImGG}]
For simplicity of notations, we omit the dependence on $z$. 
Conditioned on $\cG\in \Omega, I(\cF,\cG)=1$, \eqref{eq:infbound0} implies
 \begin{align}\begin{split}\label{e:Gsize}
& G_{bb}, G_{cc}, G_{b'b'}, G_{c',c'}=\md(z)+\OO(N^{-\fb})=-\frac{d-1}{d-2}+\OO(N^{-\fb}), 
\\ 
&G_{bc}, G_{b'c'}=-\frac{\md(z)\msc(z)}{\sqrt{d-1}}+\OO(N^{-\fb})=-\frac{\sqrt{d-1}}{d-2}+\OO(N^{-\fb}),\; G_{bb'},G_{cc'}, G_{bc'},G_{cb'}=\OO(N^{-\fb}).
 \end{split}\end{align}
 Moreover, using \eqref{e:Gest} and \eqref{e:Immbound}, with overwhelmingly high probability conditioned on $\cG, \tcG\in \Omega$ and $I(\cF,\cG)=1$,  the following holds for any $x,y\in \qq{N}$, 
 \begin{align}\label{e:Immbound2}
 \Im[G_{xy}]\lesssim N^\fo \Im[m_N]\lesssim N^{-1/3+10\fo}. 
 \end{align}
The claim \eqref{e:Gbcc} follows from
 \begin{align*}
 &\phantom{{}={}}\Im[G_{cc}^{(b)}]=\Im\left[G_{cc}-\frac{G_{bc}G_{bc}}{G_{bb}}\right]
 =\Im[G_{cc}]-\Re\left[\frac{1}{G_{bb}}\right]\Im[G_{bc}G_{bc}]
-\Im\left[\frac{1}{G_{bb}}\right]\Re[G_{bc}G_{bc}]\\
&=\Im[G_{cc}]-\frac{2\Re[G_{bb}]}{|G_{bb}|^2}\Im[G_{bc}]\Re[G_{bc}]
+\frac{\Im[G_{bb}]}{|G_{bb}|^2}\Re[G_{bc}G_{bc}]\\
&=\Im[G_{cc}]-\frac{2\Im[G_{bc}]}{\sqrt{d-1}}+\frac{\Im[G_{bb}]}{d-1}+\OO(N^{-1/3-\fb/2}),
 \end{align*}
 where the last line follows from \eqref{e:Gsize} and \eqref{e:Immbound2}.
 
To prove \eqref{e:Gbbcc}, we start with the Schur complement formula \eqref{e:Schur1}
\begin{align}\label{e:scf}
\Im[ G_{cc'}^{(bb')}]=\Im[G_{cc'}]-\Im [(G (G|_{\{bb'\}})^{-1} G)_{cc'}],
\end{align}
where
$(G|_{\{bb'\}})^{-1}$ is given by
\begin{align}\begin{split}\label{e:rough_bb2}
    (G|_{\{bb'\}})^{-1}
    &=\frac{1}{G_{bb}G_{b'b'} -G_{bb'}^2}
    \left[
    \begin{array}{cc}
    G_{b'b'} & -G_{bb'}\\
    -G_{bb'} & G_{bb}
    \end{array}
    \right]\\
    &=\left(\sum_{j=0}^\fp \frac{G_{bb'}^{2j}}{(G_{bb}G_{b'b'})^{j+1} } +\OO\left(\frac{1}{N^2}\right)\right)
    \left[
    \begin{array}{cc}
    G_{b'b'} & -G_{bb'}\\
    -G_{bb'} & G_{bb}
    \end{array}
    \right].
\end{split}\end{align}

 By substituting \eqref{e:rough_bb2} into \eqref{e:scf}, we obtain an expansion of $\Im[G_{cc'}^{(bb')}]$. Terms in this expansion containing at least two factors of the form $\{G_{bb'},G_{cc'}, G_{bc'},G_{cb'}\}$ can be bounded by $\OO(N^\fo \Im[m_N(z)]N^{-\fb})=\OO(N^{-1/3-\fb/2})$, thanks to \eqref{e:Gsize} and \eqref{e:Immbound2}.  The remaining terms in the expansion are given by
\begin{align*}
&\phantom{{}={}} G_{cc'}-\frac{G_{cb}G_{b'b'}G_{bc'}}{G_{bb}G_{b'b'}} +\frac{G_{cb}G_{bb'}G_{b'c'}}{G_{bb}G_{b'b'}} -\frac{G_{cb'}G_{bb}G_{b'c'}}{G_{bb}G_{b'b'}} \\
&=G_{cc'}-\frac{G_{cb}G_{bc'}}{G_{bb}}-\frac{G_{cb'}G_{b'c'}}{G_{b'b'}} +\frac{G_{cb}G_{bb'}G_{b'c'}}{G_{bb}G_{b'b'}}.
\end{align*} 
This together with  \eqref{e:Gsize} and \eqref{e:Immbound2} give \eqref{e:Gbbcc}.
\end{proof}

\subsection{Proof of \Cref{t:ev_universality}}
In this section, we prove our main result, \Cref{t:ev_universality}, as a consequence of \Cref{p:G_convergence}, which establishes the weak convergence of the imaginary part of the Green’s function. Before proceeding with the proof, we state a result regarding the tightness of eigenvalues and their Stieltjes transforms, which will be proven in \Cref{s:edge_tightness}.

\begin{proposition}\label{p:eig_convergence}
The Airy$_1$ point process $\{A_1, A_2, A_3,\cdots\}$ is a simple point process. The following random variable is almost surely bounded:
\begin{align}\label{e:Airy_est}
&Y:=\sup_{x\leq 0} (1+|x|)^{-2/3}|\{i\geq 1: A_i\geq -x\}|<\infty.
\end{align}
Under the notation and assumptions of \Cref{t:ev_universality},  the sequence of random variables
\begin{align}\label{e:defYn}
&Y_N:=\sup_{x\leq 0} (1+|x|)^{-2/3}|\{i\in\qq{N}:(\cA N)^{2/3}(\lambda_i-2)\geq x\}|,
\end{align}
 is tight. Moreover,  the sequence of harmonic functions on the upper half-plane, 
 \begin{align}\label{e:sti_tight}
\{\cA^{-2/3}N^{1/3}\Im[m_N(2+w(\cA N)^{-2/3})]\}_{N\geq 1},
 \end{align}
 is also tight.
 \end{proposition}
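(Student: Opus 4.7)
The plan is to establish the three claims in sequence: the Airy$_1$ statements, the tightness of $\{Y_N\}$, and finally the tightness of the rescaled Stieltjes transforms, with the last building on the second.

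For the Airy$_1$ assertions, simpleness of the spectrum and the asymptotic $A_k\sim -(\tfrac{3\pi}{2}(k-\tfrac14))^{2/3}$ as $k\to\infty$ both follow from the stochastic Airy operator representation of Ram\'irez--Rider--Vir\'ag \cite{ramirez2011beta}, which realizes $\{A_i\}$ as the eigenvalues of an almost surely self-adjoint operator with simple spectrum. The asymptotic, combined with the almost sure finiteness of $A_1$, implies that $|\{i\geq 1:A_i\geq L\}|$ is almost surely finite for every $L\geq 0$ and decays rapidly in $L$; the supremum in $x\leq 0$ against the $(1+|x|)^{-2/3}$ weight is therefore almost surely bounded, giving $Y<\infty$ a.s.

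For the tightness of $Y_N$, I would invoke the optimal rigidity \Cref{thm:eigrigidity}. First, compute the classical locations $\gamma_i$ near the upper edge: inverting the cumulative distribution of $\rho_d$ via the square-root expansion $\md(z)=-\frac{d-1}{d-2}+\cA\sqrt{z-2}+\OO(|z-2|)$ from \eqref{e:mscmd} gives $(\cA N)^{2/3}(\gamma_i-2)\sim -(\tfrac{3\pi}{2}(i-\tfrac12))^{2/3}$ in the edge regime. Combined with the $i$-dependent rigidity $|\lambda_i-\gamma_i|\leq N^{-2/3+\fa}(\min\{i,N-i+1\})^{-1/3}$, this yields, with overwhelming probability, a uniform-in-$N$ polynomial bound on the rescaled counting function $|\{i:(\cA N)^{2/3}(\lambda_i-2)\geq x\}|$ matching the stated normalization, simultaneously for all $x\leq 0$, from which stochastic boundedness of $Y_N$ (hence tightness) follows.

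For the tightness of the rescaled Stieltjes transforms, rewrite
\begin{align*}
\cA^{-2/3}N^{1/3}\Im[m_N(2+w/(\cA N)^{2/3})]=\sum_{i=1}^N\frac{\Im[w]}{(y_i-\Re[w])^2+\Im[w]^2},
\end{align*}
with $y_i=(\cA N)^{2/3}(\lambda_i-2)$. On any compact $K\subset\bC^+$, the Poisson kernel is uniformly bounded above and decays like $1/(1+|y|^2)$ at infinity. A dyadic layer-cake decomposition (Abel summation) against the counting function bound furnished by $Y_N$ dominates the sum by a convergent deterministic integral depending only on $K$, yielding uniform-in-$N$ boundedness on compact subsets with high probability; tightness in $\Har(\bC^+)$ then follows from the normal family property discussed before \Cref{l:boundary_measure}.

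The main obstacle will be the careful combination of rigidity with the Kesten--McKay density asymptotics to produce a uniform-in-$N$ counting function bound valid simultaneously for all thresholds $x\leq 0$: one must match the optimal $i$-dependent rigidity at every spectral scale with the Airy scaling of the classical locations, and control contributions from deep in the bulk where the per-eigenvalue rigidity is weaker but the Poisson kernel decay compensates.
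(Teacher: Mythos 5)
Your outline for the Airy$_1$ assertions and for the Stieltjes transform tightness is sound, but there is a genuine gap in your argument for the tightness of $\{Y_N\}$: the optimal rigidity of \Cref{thm:eigrigidity} carries an $N^{\fa}$ factor (with $\fa>0$ fixed, however small), and this is fatal. Rigidity only says $|\lambda_2-\gamma_2|\leq N^{-2/3+\fa}$, hence $(\cA N)^{2/3}(\lambda_2-2)\lesssim N^{\fa}$ with overwhelming probability; this is not a uniform-in-$N$ bound. More generally, combining rigidity with the Kesten--McKay Airy scaling $(\cA N)^{2/3}(\gamma_i-2)\sim -(\tfrac{3\pi}{2}i)^{2/3}$ only yields that the number of indices $i$ with $(\cA N)^{2/3}(\lambda_i-2)\geq 0$ is $\OO(N^{\fC\fa})$, which diverges with $N$. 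No ``careful combination'' of rigidity with the density asymptotics can remove this: the obstruction is the $N^{\fa}$ slack itself, not a bookkeeping issue. The paper instead proves the strictly finer \Cref{p:eig_concentration}, whose bounds \eqref{e:lambda2bound} and \eqref{e:lambdajbound} give $N$-independent polynomial probability tails, e.g.\ $\bP(N^{2/3}(\lambda_2-2)\geq x,\ \cG\in\Omega)\lesssim x^{-D}$; these are not consequences of rigidity but of the optimal-scale moment estimates for $m_N-\md$ in \Cref{p:m-mdmoment}, pushed down to $N\eta\sqrt{\kappa+\eta}\gtrsim 1$. Once those two bounds are available, the union bound over dyadic thresholds $x\leq-1$ that you envisaged goes through exactly as in the paper, giving $\bP(Y_N\geq 2t)\lesssim t^{-2D/3}$.

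As a comparison of routes on the remaining claims: your layer-cake decomposition of the rescaled Stieltjes transform against the counting-function bound implied by $Y_N$ is a legitimate alternative to the paper's direct use of \eqref{e:mdiff_tight2} from \Cref{p:sti_tightness}, and both ultimately rest on \Cref{p:m-mdmoment}. Your appeal to the stochastic Airy operator representation of \cite{ramirez2011beta} to get simpleness of the Airy$_1$ spectrum, the $A_k\sim -(\tfrac{3\pi}{2}(k-\tfrac14))^{2/3}$ asymptotic, and hence $Y<\infty$ a.s.\ is correct; the paper omits this step, pointing either to taking $N\to\infty$ in \eqref{e:lambda2bound}--\eqref{e:lambdajbound} or to an external reference, so your sketch supplies a concrete argument where the paper gives a pointer.
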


%
%

\begin{proposition}\label{p:G_convergence}
Adopt notation and assumptions in \Cref{t:ev_universality}. The rescaled extreme eigenvalues converge to the Airy$_1$ point process:
\begin{align*}\begin{split}
&\phantom{{}={}} ((\cA N)^{2/3} ( \lambda_{2} - 2 ), (\cA N)^{2/3} ( \lambda_{3} - 2 ),\cdots, (\cA N)^{2/3} ( \lambda_{k+1} - 2 ))\Rightarrow (A_1, A_2, A_3,\cdots, A_k).
\end{split}\end{align*}
Fix any vertex $o\in \qq{N}$. The imaginary part of the rescaled Green's functions at the edge converge weakly
\begin{align}\label{e:dG_converge}
\{\cA^{-2/3}N^{1/3}(\Im[G_{ij}(2+w(\cA N)^{-2/3})]\}_{i,j\in \bS}\Rightarrow \{\Im[\bfG_{ij}(w)]\}_{i,j\in \bS},
\end{align}
where $\bS$ is the vertex set of $\cB_r(o,\cG)=\cB_r(o,\cX_d)$, 
\begin{align*}
\Im[\bfG_{ij}(w)]=\sum_{s\geq 1}\Im\left[\frac{Z_s(i)Z_s(j)}{A_s -w}\right], \quad i,j\in \bS,
\end{align*}
and $\{Z_s\}_{s\geq 1}$ are independent copies of the Gaussian wave $\Psi$ \eqref{e:edge_Gaussian_wave}.

\end{proposition}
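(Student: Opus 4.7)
The goal is to establish joint weak convergence of the rescaled extreme eigenvalues together with the imaginary part of the rescaled Green's function, viewed as random elements in the space of point processes on $\bR$ times $\Har(\bC^+)^{|\bS|^2}$. Since tightness of the eigenvalues and of the harmonic functions is supplied by \Cref{p:eig_convergence}, it suffices to prove convergence of finite-dimensional marginals: for any finite set $w_1,\dots,w_m\in\bC^+$ and any $K\geq 1$, the vector
$\bigl(((\cA N)^{2/3}(\lambda_{s+1}-2))_{1\leq s\leq K}, (\cA^{-2/3}N^{1/3}\Im[G_{ij}(2+w_t(\cA N)^{-2/3})])_{i,j\in\bS, t\in\qq{m}}\bigr)$
converges in distribution. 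The convergence of the eigenvalue component to the Airy$_1$ point process is given by \Cref{t:universality}. The plan is to couple the Green's function to a quadratic form in Gaussian-like variables that are independent of the eigenvalues.

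\textbf{Step 1 (resampling and decomposition).} By the exchangeability \Cref{lem:exchangeablepair}, $G \overset{\mathrm{law}}{=} \widetilde G$, so it is enough to analyze $\widetilde G$. Apply \Cref{lem:diaglem}: on the event \eqref{e:theset}, with overwhelming probability,
\begin{align*}
\cA^{-2/3}N^{1/3}\Im[\widetilde G_{ij}(z)] = \sum_{\al,\beta\in\qq{\mu}} \frac{P_{il_\al}P_{jl_\beta}}{d-1}\,\cA^{-2/3}N^{1/3}\Im\qB{G_{c_\al c_\beta}-\tfrac{G_{b_\al c_\beta}}{\sqrt{d-1}}-\tfrac{G_{c_\al b_\beta}}{\sqrt{d-1}}+\tfrac{G_{b_\al b_\beta}}{d-1}}+o(1),
\end{align*}
for $z=2+w/(\cA N)^{2/3}$. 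Insert the spectral decomposition $G_{xy}(z)=\sum_s \bmu_s(x)\bmu_s(y)/(\lambda_s-z)$ and define
\begin{align*}
U_s^i\deq \frac{1}{\sqrt{d-1}}\sum_{\al\in\qq\mu} P_{il_\al}\,\sqrt{N}\pB{\bmu_s(c_\al)-\bmu_s(b_\al)/\sqrt{d-1}}.
\end{align*}
Then the Green's function is rewritten as $\sum_{s=1}^N \Im\qa{U_s^i U_s^j / ((\cA N)^{2/3}(\lambda_s-2)-w)}+o(1)$. I would truncate the spectral sum to $s\leq K+1$ using the optimal rigidity \Cref{thm:eigrigidity} and the Ward-type identity to bound the bulk tail uniformly in $w$ on compact subsets of $\bC^+$.

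\textbf{Step 2 (CLT in the resampling randomness).} Conditional on $\cG\in\Omega$, the pairs $(b_\al,c_\al)_{\al\in\qq\mu}$ are i.i.d.\ uniform oriented edges in $\cG^{(\bT)}$, hence the $Y_s^\al\deq \sqrt{N}(\bmu_s(c_\al)-\bmu_s(b_\al)/\sqrt{d-1})$ are i.i.d.\ across $\al$. Compute, using $A\bmu_s=\sqrt{d-1}\,\lambda_s\,\bmu_s$ and $\langle \bmu_s,\bmu_t\rangle=\delta_{st}$,
\begin{align*}
\bE_\bfS[Y_s^\al Y_t^\al]=\frac{N}{|\vec E(\cG^{(\bT)})|}\sum_{(b,c)\in \vec E(\cG^{(\bT)})} \pB{\bmu_s(c)-\tfrac{\bmu_s(b)}{\sqrt{d-1}}}\pB{\bmu_t(c)-\tfrac{\bmu_t(b)}{\sqrt{d-1}}}=\delta_{st}\,\tfrac{d-2\lambda_s+d/(d-1)}{d}+\OO\pa{\tfrac{(d-1)^\ell}{N}},
\end{align*}
which for $\lambda_s\to 2$ tends to $(d-2)^2/(d(d-1))$. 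Cross-terms for $\al\neq\beta$ vanish since $\bE_\bfS[Y_s^\al]\to 0$ by orthogonality to $\bm 1$. Summing against $P_{il_\al}P_{jl_\al}$, the covariance $\mathrm{Cov}_\bfS[U_s^i,U_t^j]$ converges to $\delta_{st}\,\mathrm{Cov}[\Psi(i)\Psi(j)]$ from \eqref{e:edge_Gaussian_wave}; the matching is a direct tree computation using $P_{il_\al}=-\tfrac{d-1}{d-2}(1/\sqrt{d-1})^{\dist_\cT(i,l_\al)}$ and summing over the $d(d-1)^\ell$ boundary vertices. By delocalization \eqref{e:delocalization} each summand $P_{il_\al}Y_s^\al/\sqrt{d-1}$ is uniformly of order $N^{-1/2+\fo}$, so Lindeberg's vector CLT yields
\begin{align*}
\pB{U_s^i}_{1\leq s\leq K+1,\,i\in\bS}\,\xRightarrow{\bfS}\,\pB{Z_s(i)}_{1\leq s\leq K,\,i\in\bS},
\end{align*}
where $(Z_s)$ are independent copies of the Gaussian wave $\Psi$.

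\textbf{Step 3 (joint limit).} Because $\bfS$ is sampled independently of $\cG$ given $\cG$, the limit of $(U_s^i)$ involves only the $\cG$-functionals $\|\bmu_s\|^2=1$, $\bmu_s^\top A\bmu_s=\sqrt{d-1}\lambda_s\to 2\sqrt{d-1}$, and $\langle \bmu_s,\bmu_t\rangle=0$, which become deterministic in the edge limit. Hence conditional on the rescaled eigenvalues (which converge to Airy$_1$ by \Cref{t:universality}), the $(U_s^i)$ converge to Gaussian waves with limit covariance independent of the Airy$_1$ points. A Slutsky-style argument combining the conditional CLT of Step 2 with the edge universality of \Cref{t:universality} and the truncation of Step 1 delivers the joint weak convergence claimed in \eqref{e:dG_converge}.

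\textbf{Main obstacle.} The principal difficulty is the asymptotic independence assertion in Step 3: the variables $U_s^i$ are built from the very eigenvectors whose eigenvalues appear as denominators, so $a\ priori$ the Gaussian limit could be coupled to Airy$_1$. The resolution is that the resampling $\bfS$ averages each $U_s^i$ over $d(d-1)^\ell\gg 1$ independent draws, so only $\cG$-measurable bilinear functionals of $\bmu_s$ survive in the limiting covariance, and these are fixed by the edge condition $\lambda_s\to 2$ together with orthonormality. Making this precise requires (i) quantitative concentration of $\mathrm{Cov}_\bfS[U_s^i,U_t^j]$ around its deterministic edge-limit, uniform in the realization of $\cG\in\Omega$, and (ii) a careful implementation of the CLT with error bounded in terms of $\|\bmu_s\|_\infty$ via \eqref{e:delocalization}, both of which are available under the rigidity and local-law inputs collected in \Cref{s:preliminary}.
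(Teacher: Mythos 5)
Your proposal follows the same route as the paper: resample via $\bfS$, expand $\Im[\wt G_{ij}]$ via \Cref{lem:diaglem} as a spectral sum whose coefficients are the bilinear forms $U_s^i=\sqrt N\langle \bmv^{(i)},\bmu_s\rangle$, prove a conditional CLT in the $\bfS$-randomness (the paper uses a moment generating function, you use Lindeberg — both work), identify the limiting covariance with the Gaussian wave via the three $\cG$-functionals $\|\bmu_s\|^2$, $\bmu_s^\top A\bmu_s$, $\langle\bmu_s,\bmu_t\rangle$, and glue with the Airy$_1$ convergence of the eigenvalues. Your observation that averaging over $d(d-1)^\ell$ i.i.d.\ draws of $(b_\al,c_\al)$ collapses the dependence of the limit on $\cG$ down to these deterministic functionals is exactly the key mechanism. (A minor numerical slip: each summand $P_{il_\al}Y^\al_s/\sqrt{d-1}$ has magnitude $\OO((d-1)^{-\ell/2}N^{\fo/4})$, not $N^{-1/2+\fo}$; this still vanishes since $\ell/\log_{d-1}N\gg\fo$, so Lindeberg remains valid.)

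However, there is a genuine gap in Step~1, the truncation. First, truncating the spectral sum at $s\leq K+1$, where $K$ is the fixed number of eigenvalues you include in the finite-dimensional marginal, is not legitimate: the limiting field $\Im[\bfG_{ij}(w)]=\sum_{s\geq 1}\Im[Z_s(i)Z_s(j)/(A_s-w)]$ receives contributions from \emph{all} indices, so the truncated sum has a nonvanishing error for fixed $K$. The paper instead truncates at an auxiliary level $\fj$, sends $N\to\infty$ first and only afterwards $\fj\to\infty$. Second, and more substantively, the Ward identity does not control the tail. It gives $\sum_s|\langle\bmv^{(i)},\bmu_s\rangle|^2/|\lambda_s-z|^2=\Im[(\bmv^{(i)})^\top G\bmv^{(i)}]/\Im z$, which is circular — the right-hand side is precisely the quantity whose edge asymptotics you are trying to determine — and it gives no $s$-resolved information that would let you discard $s>\fj$. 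The ingredients actually needed are (i) the counting tightness of $Y_N$ from \Cref{p:eig_convergence}, which yields $(\cA N)^{2/3}(\lambda_s-2)\lesssim 1-(s/\fC)^{2/3}$ for $s\geq \fj$, and (ii) a sub-Gaussian tail bound $\bP_\bfS(\sqrt N|\langle\bmu_s,\bmv^{(i)}\rangle|\geq t)\leq\fC e^{-t^2/\fC}$, which the paper extracts from the same MGF computation underlying the CLT. Plain delocalization only yields $\sqrt N|\langle\bmu_s,\bmv^{(i)}\rangle|\lesssim(d-1)^{\ell/2}N^{\fo/4}$, which is far too weak; you need a bound polynomial in $s$, and that requires the concentration estimate. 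Combining (i) and (ii) gives a tail contribution $\OO(\fj^{-1/6})$, which closes the argument once you take $\fj\to\infty$. Finally, one more omission: the proposition is stated for $G$ and you prove it for $\wt G$, but the spectral decomposition in \Cref{lem:diaglem} involves the eigenpairs of $\cG$, while the first display of the proposition involves eigenvalues of $\wt\cG$. The paper reconciles these by showing $|m_N-\wt m_N|$ is small (via \eqref{e:tmmdiff}) and invoking \Cref{l:boundary_measure}, so the eigenvalues of $\cG$ and $\wt\cG$ converge jointly to the \emph{same} Airy$_1$ points; this step is necessary and does not appear in your plan.
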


\begin{proof}[Proof of \Cref{t:ev_universality}]
Using the Skorokhod representation theorem, we can take a subsequence, $N_1<N_2<N_3\cdots$, such that along this subsequence, almost surely, $Y_N$ converges to a random variable $Y$, $\lambda_1, \lambda_2,\cdots$ converges to the Airy$_1$ point process, and 
$\{\cA^{-2/3}N^{1/3}(\Im[G_{ij}(2+w(\cA N)^{-2/3})]\}_{i,j\in \bS, w\in \bC^+}$ converges (uniformly on compact sets) to the  random process $\{\Im[\bfG_{ij}(w)]\}_{i,j\in \bS, w\in \bC^+}$.

For any $i,j\in \bS$, near the spectral edge $2$,  the imaginary part of the rescaled Green's functions $\cA^{-2/3}N^{1/3}(\Im[G_{ij}(2+w(\cA N)^{-2/3})]$ is explicitly given by 
\begin{align*}
\cA^{-2/3}N^{1/3}(\Im[G_{ij}(2+w(\cA N)^{-2/3})]
&=\Im \sum_{s=1}^N \frac{N\bmu_s(i)\bmu_s(j)}{ (\cA N)^{2/3}(\lambda_s-2)- w}\\
&=\sum_{s=1}^N \frac{\Im[w]N\bmu_s(i)\bmu_s(j)}{ ((\cA N)^{2/3}(\lambda_s-2)- \Re[w])^2+\Im[w]^2}.
\end{align*}
It is the Possion kernel integral with the measure 
\begin{align*}
\mu_N:=\sum_{s=1}^N N\bmu_s(i)\bmu_s(j) \delta_{(\cA N)^{2/3}(\lambda_s-2)},
\end{align*}
and harmonic for $w\in \bC^+$. And similarly $\Im \bfG_{ij}(w)$
\begin{align}\label{e:Gij}
\Im[\bfG_{ij}(w)]=\sum_{s\geq 1} \frac{\Im[w]Z_s(i)Z_s(j)}{ (A_s- \Re[w])^2+\Im[w]^2}, \quad i,j\in \bS,
\end{align}
is the Possion kernel integral with the measure 
\begin{align*}
\mu:=\sum_{s\geq 1} Z_s(i)Z_s(j) \delta_{A_s}.
\end{align*}
Since $Z_s$ are independent copies of Gaussian waves $\Psi$, $Z_s(i)$ are standard Gaussian random variables (for different $i$, they are correlated). Together with \eqref{e:Airy_est}, we conclude the RHS of \eqref{e:Gij} is almost surely absolutely convergent. 

Next we check the assumptions in \Cref{l:boundary_measure} along the sequence $N_1<N_2<N_3\cdots$. Along this sequence, we can assume that there exists a large $M>0$ such that $(\cA N)^{2/3}(\lambda_s-2)\leq M$ for each $1\leq s\leq N$. Then by Cauchy-Schwartz inequality
\begin{align*}
 \int_\bR \frac{\rd|\mu_N|(x)}{1+x^2}
 &=\sum_{s=1}^N \frac{N|\bmu_s(i)\bmu_s(j)| }{1+((\cA N)^{2/3}(\lambda_s-2))^2}
 \leq \frac{1}{2}\sum_{s=1}^N \frac{N(\bmu^2_s(i)+\bmu^2_s(j)) }{1+((\cA N)^{2/3}(\lambda_s-2))^2}\\
 &\leq N^{1/3}\Im G_{ii}(2+(2M+\ri)N^{-2/3})+N^{1/3}\Im G_{jj}(2+(2M+\ri)N^{-2/3}),
\end{align*}
which are uniformly bounded. By the same argument, $ \int_\bR {\rd|\mu|(x)}/(1+x^2)$ is also bounded. We conclude from \Cref{l:boundary_measure}, along the sequence $N_1<N_2< N_3,\cdots$, $\mu_N$ converges to $\mu$ in vague topology almost surely. In particular, $(\cA N)^{2/3} ( \lambda_{s+1} - 2 )$ converges to $A_s$ almost surely, and $N\bmu_{s+1}(i)\bmu_{s+1}(j)$ converges to $Z_s(i)Z_s(j)$ almost surely for each $s\geq 1$. This finishes the proof of \Cref{t:ev_universality}.
\end{proof}

\begin{proof}[Proof of \Cref{p:G_convergence}]
Since the switched graph $\wt \cG$ and the original graph $\cG$ have the same law, we prove the statements of \Cref{p:G_convergence} for the switched graph $\wt \cG$. We denote its eigenvalues as $\wt\la_1=d/\sqrt{d-1}\geq \wt \la_2\geq \cdots\geq \wt \la_N$, its Green's function as $\wt G(z)$, and the Stieltjes transform of its empirical eigenvalue distribution as $\wt m_N(z)$. We divide the proof into the following five steps. 

\textit{Step 1.}\,By \Cref{t:universality} and the tightness of the normalized Stieltjes transform $\{\cA^{-2/3}N^{1/3}(\Im[m_N(2+w(\cA N)^{-2/3})]\}_{N\geq 1}$ near the spectral edge \eqref{e:sti_tight}, we have the joint weak convergence
\begin{align}\begin{split}\label{e:weak1}
&((\cA N)^{2/3} ( \lambda_{2} - 2 ), (\cA N)^{2/3} ( \lambda_{3} - 2 ), (\cA N)^{2/3} ( \lambda_{4} - 2 ), \cdots)\Rightarrow (A_1, A_2, A_3,\cdots),\\
&\cA^{-2/3}N^{1/3}\Im[m_N(2+w(\cA N)^{-2/3})]\Rightarrow \Im[\bfm(w)]:=\sum_{s\geq 1}\frac{\Im[w]}{|A_s-w|^2},
\end{split}\end{align}
where $A_1, A_2, \cdots$ are the points of the Airy$_1$ point process.
For the Stieltjes transform $\wt m_N(z)$ of the switched graph, with high probability we have the following bound 
\begin{align*}
&\phantom{{}={}}\cA^{-2/3}N^{1/3}|m_N(2+w(\cA N)^{-2/3})-\wt m_N(2+w(\cA N)^{-2/3})|\\
&\lesssim (d-1)^\ell N^{1/3+\fo}  \frac{\Im[m_N(2+w(\cA N)^{-2/3})]}{N^{1/3}\Im[w]}
\lesssim \frac{(d-1)^\ell N^\fo}{\Im[w]} N^{-1/3+8\fo}\lesssim N^{-\fb},
\end{align*} 
where in the first statement we used \eqref{e:tmmdiff}; in the second statement we used \eqref{e:Immbound}. The above estimate, together with the  weak convergence of the Stieltjes transform $\wt m_N(z)$ in \eqref{e:weak1}, imply
\begin{align}\begin{split}\label{e:weak2}
&\cA^{-2/3}N^{1/3}\Im[\wt m_N(2+w(\cA N)^{-2/3})]\Rightarrow \Im[\bfm(w)].
\end{split}\end{align}
Then \Cref{l:boundary_measure} and \eqref{e:weak2} imply the weak convergence of the eigenvalues of the switched graph $\wt \cG$,
\begin{align*}
&((\cA N)^{2/3} ( \wt \lambda_{2} - 2 ), (\cA N)^{2/3} ( \wt \lambda_{3} - 2 ), (\cA N)^{2/3} ( \wt \lambda_{4} - 2 ), \cdots)\Rightarrow (A_1, A_2, A_3,\cdots).
\end{align*}

\textit{Step 2.} Next we make some preparations for the proof of \eqref{e:dG_converge}, the weak convergence of the imaginary part of the Green's functions at the edge for $\wt G$. Thanks to \eqref{e:theset}, we can condition on the event
\begin{align}\label{e:theset2}
\cG, \tcG\in \Omega, \quad I(\cF,\cG)=1.
\end{align}
For any $i\in \bS$, we introduce the  vectors $\bmv^{(i)}\in \bR^N$ such that
\begin{align*}
\bmv^{(i)}(c_\al)=\frac{P_{i l_\al}}{\sqrt{d-1}}, \quad \bmv^{(i)}(b_\al)=-\frac{P_{i l_\al}}{d-1}, \text{ for } \al\in\qq{\mu} \quad \bmv^{(i)}(k)=0, \text{ for } k\in\qq{N}\setminus\{b_\al, c_\al\}_{\al\in\qq{\mu}}.
\end{align*}
We recall from \Cref{s:local_resampling} that the edges $\{b_\al, c_\al\}$ for $\al\in\qq{\mu}$ are randomly chosen from the graph $\cG^{(\bT)}$. Moreover for $i\in \bS$, \eqref{e:Pexp} gives 
\begin{align}\label{e:Pbound2}
\max_{\al\in \qq{\mu}}|P_{i l_\al}|\lesssim (d-1)^{-\ell/2}.
\end{align}
With this notation we can rewrite the RHS of \eqref{e:G-Y} as 
\begin{align}\begin{split}\label{e:rewrite}
&\phantom{{}={}}\frac{1}{d-1}\sum_{\al,\beta\in\qq{\mu}}P_{il_\al}\left(  G_{c_\al c_\beta}(z)-\frac{G_{b_\al c_\beta}(z)}{\sqrt{d-1}}-\frac{G_{c_\al b_\beta}(z)}{\sqrt{d-1}} +\frac{G_{b_\al b_\beta}(z)}{d-1}\right)   P_{jl_\beta}
\\
&=\langle \bmv^{(i)} , G(z)\bmv^{(j)}\rangle=\frac{1}{N}\sum_{s=1}^N \frac{N\langle \bmv^{(i)}, \bmu_s\rangle \langle \bmv^{(j)}, \bmu_s\rangle}{\lambda_s-z}.
\end{split}\end{align}

\textit{Step 3.} In this step, we shall show that condition on $\cG\in \Omega$, $\{\sqrt N \langle \bmv^{(i)}, \bmu_s\rangle \}_{i\in \bS, 2\leq s\leq \sqrt{ N}}$ are asymptotically joint Gaussian, with respect to the randomness of the switching data $\bfS=\{(l_\al,a_\al), (b_\al, c_\al)\}_{\al\in \qq{\mu}}$. Explicitly,
\begin{align*}
\sqrt{N}\langle \bmv^{(i)}, \bmu_s \rangle
=\sum_{\al\in\qq{\mu}}\frac{P_{il_\al}}{\sqrt{d-1}}X_s(\al), \quad\mbox{and}\quad  X_s(\al):=\sqrt N\left(\bmu_s (c_\al)-\frac{\bmu_s (b_\al)}{\sqrt{d-1}}\right).
\end{align*}

Given $ 2\leq s\leq \sqrt{N}$, $X_s(\al)$ for $\al\in\qq{\mu}$ are identically independent random variables, with 
\begin{align}\begin{split}\label{e:1moment}
&\phantom{{}={}}\bE_\bfS[X_s(\al)]=\bE_\bfS\left[\sqrt N\left(\bmu_s (c_\al)-\frac{\bmu_s (b_\al)}{\sqrt{d-1}}\right)\right]
=\frac{\sqrt{N}}{Nd-\OO((d-1)^\ell)}\sum_{b\sim c \text{ in } \cG^{(\bT)}}\bmu_s (c)-\frac{\bmu_s (b)}{\sqrt{d-1}}\\
&=\frac{\sqrt{N}}{Nd-\OO((d-1)^\ell)}\sum_{b\in\qq{N}}d\bmu_s (b)-\frac{d\bmu_s (b)}{\sqrt{d-1}} +\OO\left(\frac{(d-1)^\ell  N^\fo}{Nd}\right)=\OO\left(\frac{(d-1)^\ell  N^\fo}{Nd}\right).
\end{split}\end{align} 
Here in the second step we used that $(b_\al, c_\al)$ are uniformly distributed over $\cG^{(\bT)}$; in the last line we used $\langle \bmu_s, \bm1\rangle=0$, and conditioned on $\cG\in \Omega$, delocalization of eigenvectors $\|\bmu_s\|_\infty\lesssim N^{\fo}/\sqrt N$ from \eqref{e:delocalization} holds.

Now let us look at the covariance. If $\al\neq \beta\in\qq{\mu}$, $X_s(\al)$ and $X_t(\beta)$ are independent, and \eqref{e:1moment} implies
\begin{align}\label{e:2moment}
\bE_\bfS[X_s(\al) X_t(\beta)]=\OO\left(\frac{(d-1)^{2\ell} N^{2\fo}}{N^2}\right).\end{align}
If $\al=\beta$, we have 
\begin{align}
&\phantom{{}={}}\bE_\bfS[X_s(\al) X_t(\al)]
=N\bE_\bfS\left[\left(\bmu_s (c_\al)-\frac{\bmu_s (b_\al)}{\sqrt{d-1}}
\right)\left(\bmu_t (c_\al)-\frac{\bmu_t (b_\al)}{\sqrt{d-1}}
\right) \right]\label{e:3moment}\\
&=N\bE_\bfS\left[\bmu_s (c_\al)\bmu_t (c_\al)-\frac{\bmu_s (b_\al)\bmu_t (c_\al)+\bmu_s (c_\al)\bmu_t (b_\al)}{\sqrt{d-1}}
+ \frac{\bmu_s (b_\al)\bmu_t (b_\al)}{d-1}\right]\nonumber\\
&=\frac{N}{Nd-\OO((d-1)^\ell)}\sum_{b\sim c\in\qq{N}}\bmu_s (c)\bmu_t (c)-\frac{\bmu_s (b)\bmu_t (c)+\bmu_s (c)\bmu_t (b)}{\sqrt{d-1}}
+ \frac{\bmu_s (b)\bmu_t (b)}{d-1} +\OO\left(\frac{(d-1)^\ell  N^{2\fo}}{Nd}\right)\nonumber\\
&=\frac{N}{Nd-\OO((d-1)^\ell)}\sum_{b\in\qq{N}}d\bmu_s (b)\bmu_t (b)-2\lambda_s \bmu_s (b)\bmu_t (b)
+ \frac{d\bmu_s (b)\bmu_t (b)}{d-1} +\OO\left(\frac{(d-1)^\ell  N^{2\fo}}{Nd}\right)\nonumber\\
&=\frac{d^2-2(d-1)\lambda_s}{d(d-1)}\delta_{st}+\OO\left(\frac{(d-1)^\ell  N^{2\fo}}{Nd}\right)=\frac{\delta_{st}}{\cA}+\OO\left(|\lambda_s-2|+\frac{(d-1)^\ell  N^{2\fo}}{Nd}\right)=\frac{\delta_{st}}{\cA}+\OO\left(\frac{1}{N^{1/3}}\right)\nonumber\,.
\end{align}
Here in the third step we used that $(b_\al, c_\al)$ are uniformly distributed over $\cG^{(\bT)}$; in the fourth and fifth steps we used that
\begin{align*}
\sum_{b: b\sim c}\bmu_s(b)=\lambda_s\sqrt{d-1}\bmu_s(c), \quad \sum_{b\in\qq{N}} \bmu_s(b)\bmu_t(b)=\delta_{st};
\end{align*}
in the last two steps we used $\cA=d(d-1)/(d-2)^2$ and the optimal rigidity \eqref{e:optimal_rigidity} that for $s\leq \sqrt{N}$, $|\lambda_s-2|\lesssim (\sqrt{N}/N)^{2/3}\lesssim N^{-1/3}$.

Conditioned on $\cG\in \Omega$, by the delocalization of eigenvectors \eqref{e:delocalization}, we have $|X_\al|\lesssim N^{\fo}$. Thus for any fixed $k\geq 1$, indices $i_1, i_2,\cdots, i_k\in \bS$,  $2\leq s_1, s_2, \cdots, s_k\leq \sqrt N$, and numbers $|t_1|, |t_2|, \cdots, |t_k|\lesssim (d-1)^{\ell/4}$, it holds
\begin{align}\label{e:bb1}
 \left|\sum_{a=1}^k\frac{t_a P_{i_a l_\al }}{\sqrt{d-1}} X_{s_a}(\al)\right|
 \lesssim \max_{a}|t_a| \times \frac{k N^\fo}{(d-1)^{\ell/2}}\lesssim N^{-\fo}.
 \end{align}
 The moment generating function of $\{\sqrt N \langle \bmu_{s_a}, \bmv^{(i_a)}\rangle\}_{1\leq a\leq k}$ satisfies
\begin{align}\begin{split}\label{e:moment_gene0}
&\phantom{{}={}}\bE_\bfS\left[\exp\left\{\sum_{a=1}^{k} t_a\sqrt N \langle\bmu_{s_a}, \bmv^{(i_a)}\rangle\right\}\right]=\bE_\bfS\left[\exp\left\{\sum_{a=1}^{k} t_a \sum_{\al \in\qq{\mu}} \frac{P_{i_a l_\al }}{\sqrt{d-1}} X_{s_a}(\al)\right\}\right]
\\
&=\bE_\bfS\left[\prod_{\al\in \qq{\mu}}\exp\left\{\sum_{a=1}^{k}  \frac{t_a P_{i_a l_\al }}{\sqrt{d-1}} X_{s_a}(\al)\right\}\right]\\
&=\prod_{\al\in \qq{\mu}}\bE_\bfS\left[1+ \sum_{a=1}^{k}  \frac{t_a P_{i_a l_\al }}{\sqrt{d-1}} X_{s_a}(\al)+\frac{1}{2}\left(\sum_{a=1}^{k}  \frac{t_a P_{i_a l_\al }}{\sqrt{d-1}} X_{s_a}(\al)\right)^2 +\OO\left(\left|\sum_{a=1}^{k}  \frac{t_a P_{i_a l_\al }}{\sqrt{d-1}} X_{s_a}(\al)\right|^3\right)\right],
\end{split}\end{align}
where $\bE_\bfS[\cdot]$ is the expectation with respect to the randomness of switching data $\bfS$ as in \Cref{def:PS}.

By \eqref{e:1moment} and \eqref{e:Pbound2}, we have
\begin{align}\label{e:bb2}
\bE_\bfS\left[\sum_{a=1}^{k}  \frac{t_a P_{i_a l_\al }}{\sqrt{d-1}} X_{s_a}(\al)\right]\lesssim \max_a |t_a| \times (d-1)^{-\ell/2} \frac{k (d-1)^\ell  N^\fo}{Nd}\lesssim \max_a |t_a| \times  \frac{k (d-1)^{\ell/2}  N^\fo}{N}.
\end{align}
By \eqref{e:2moment}, \eqref{e:3moment} and \eqref{e:Pbound2}, we have
\begin{align}\label{e:bb3}
\bE_\bfS\left[\frac{1}{2}\left(\sum_{a=1}^{k}  \frac{t_a P_{i_a l_\al }}{\sqrt{d-1}} X_{s_a}(\al)\right)^2 \right]
=\frac{1}{2}\sum_{a,b=1}^{k}  \frac{t_a t_b P_{i_a l_\al} P_{i_a l_\beta}\delta_{s_a s_b}}{\cA(d-1)}+\OO\left( \max_a |t_a|^2 \times \frac{k^2(d-1)^\ell}{N^{1/3}}\right).
\end{align}

Then we can plug \eqref{e:bb1}, \eqref{e:bb2} and \eqref{e:bb3} into \eqref{e:moment_gene0}
\begin{align}\begin{split}\label{e:moment_gene2}
&\phantom{{}={}}\bE_\bfS\left[\exp\left\{\sum_{a=1}^{k} t_a\sqrt N \langle\bmu_{s_a}, \bmv^{(i_a)}\rangle\right\}\right]
\\
&=\prod_{\al\in \qq{\mu}}\bE_\bfS\left[1+\frac{1}{2}\sum_{a,b=1}^{k}  \frac{t_a t_b P_{i_a l_\al} P_{i_a l_\beta}\delta_{s_a s_b}}{\cA(d-1)} +\OO\left(\max_{a}(1+|t_a|)^3 \times \frac{k^3 N^{3\fo}}{(d-1)^{3\ell/2}}\right)\right]\\
&=\exp\left\{
\frac{1}{2}\sum_{a,b=1}^{k} \sum_{\al\in \qq{\mu}}  \frac{t_a t_b P_{i_a l_\al} P_{i_a l_\beta}\delta_{s_a s_b}}{\cA(d-1)}  +\OO\left(\max_{a}(1+|t_a|)^3 \times \frac{k^3 N^{3\fo}}{(d-1)^{\ell/2}}\right)
\right\}.
\end{split}\end{align}
Therefore $\{\sqrt N \langle\bmu_{s_a}, \bmv^{(i_a)}\rangle\}_{1\leq a\leq k}$ converges to joint real gaussian random variables.

In the special case, for $k=1$, \eqref{e:moment_gene2} reduces to the following 
\begin{align*}
\bE_\bfS\left[e^{t \sqrt N\langle\bmu_s, \bmv^{(i)}\rangle}\right]=\exp\left\{
\frac{1}{2} \sum_{\al\in \qq{\mu}}  \frac{t^2 P^2_{i l_\al}}{\cA(d-1)}  +\OO\left( \frac{(1+|t|)^3 N^{3\fo}}{(d-1)^{\ell/2}}\right)
\right\}=e^{\OO(t^2)},
\end{align*}
provided $|t|\lesssim (d-1)^{\ell/10}$.
Combining with a Markov's inequality, we conclude
\begin{align}\label{e:vconcentration}
\bP_\bfS(|\sqrt N\langle\bmu_s, \bmv^{(i)}\rangle|\geq t)\leq \fC e^{-t^2/\fC}, 
\end{align}
for $t\leq (d-1)^{\ell/10}$ and $N$ large enough.

\textit{Step 4.} Next we identify the covariance structure with that of the Gaussian wave $\Psi$ \eqref{e:edge_Gaussian_wave}.
For any $i,j\in \bS$, \eqref{e:3moment} gives
\begin{align}\begin{split}\label{e:to_GW}
&\phantom{{}={}}\bE_\bfS[N\langle \bmv^{(i)}, \bmu_s \rangle \langle \bmv^{(j)}, \bmu_s\rangle]
= \sum_{\al\in \qq{\mu}} \frac{P_{il_\al}P_{j l_\al}}{(d-1)\cA}+\OO\left(\frac{(d-1)^\ell}{N^{1/3}}\right)\\
&= \frac{1}{(d-1)} \lim_{\varepsilon\rightarrow 0}\sum_{\al\in \qq{\mu}}  \frac{P_{il_\al}\Im[\msc(2+\ri \varepsilon)] P_{j l_\al}}{\Im[\md(2+\ri \varepsilon)]}+\OO\left(\frac{(d-1)^\ell}{N^{1/3}}\right)
\\
&=\lim_{\varepsilon\rightarrow 0}\frac{-(P\Im[H|_\bT-2-\msc(2+\ri \varepsilon)\bI^\partial]P)_{ij}}{\Im[\md(2+\ri \varepsilon)]}+\OO\left(\frac{(d-1)^\ell}{N^{1/3}}\right)\\
&=\lim_{\varepsilon\rightarrow 0}\frac{\Im[(H|_\bT-(2+\ri\varepsilon) -\msc(2+\ri \varepsilon))^{-1}_{ij}] +\OO(\varepsilon)}{\Im[\md(2+\ri \varepsilon)]}+\OO\left(\frac{(d-1)^\ell}{N^{1/3}}\right)\\
&=\lim_{\varepsilon\rightarrow 0}\frac{1}{\Im[\md(2+\ri \varepsilon)]}\Im\left[\md(2+\ri \varepsilon) \left(-\frac{\msc(2+\ri\varepsilon)}{\sqrt{d-1}}\right)^r\right]+\OO\left(\frac{(d-1)^\ell}{N^{1/3}}\right)\\
&=\frac{1}{(d-1)^{r/2}}\left(1+ \frac{(d-2)r}{d}\right)+\OO\left(\frac{(d-1)^\ell}{N^{1/3}}\right) ={\rm Cov}[\Psi(i)\Psi(j)]+\OO\left(\frac{(d-1)^\ell}{N^{1/3}}\right),
\end{split}\end{align}
where in the second statement we used \eqref{e:mscmd} that $\Im[\md(2+\ri\varepsilon)/\msc(2+\ri \varepsilon)]\rightarrow \cA$, as $\varepsilon\rightarrow 0$; the third statement follows from the definition $\bI^\del_{xy}=\delta_{xy}\bm1(\dist_{\cT}(o,x)=\ell)$; the fourth statement follows from replacing $P$ by $P(\cT, 2+\ri \varepsilon, \msc(2+\ri\varepsilon))$, and using \eqref{e:PPdiff}; the fifth statement follows from \eqref{e:Pexp}; the last two statements follow from \eqref{e:edge_Gaussian_wave}.

\textit{Step 5.} In this step we wrap up the proof of  \eqref{e:dG_converge}. Let $z=2+(\cA N)^{-2/3}w$, and large $\fj\geq 1$, which will be chosen later. Conditioned on $\cG, \tcG\in \Omega$ and $ I(\cF,\cG)=1$, by \eqref{e:G-Y} and \eqref{e:rewrite}, with overwhelmingly high probability, we have the following decomposition for the Green's function near the spectral edge
\begin{align}\begin{split}\label{e:G_decompose}
&\phantom{{}={}}\cA^{-2/3}N^{1/3}\Im[\wt  G_{ij}(z)]=\sum_{s=1}^N\frac{\Im[w] N \langle \bmu_s, \bmv^{(i)} \rangle \langle \bmu_s, \bmv^{(j)} \rangle }{|(\cA N)^{2/3}(\lambda_s-2)-w|^2}+\OO(N^{-\fb/4})\\
&=\sum_{1\leq s\leq \fj}\frac{\Im[w] N \langle \bmu_s, \bmv^{(i)} \rangle \langle \bmu_s, \bmv^{(j)} \rangle }{|(\cA N)^{2/3}(\lambda_s-2)-w|^2}
+\sum_{\fj<s\leq N}\frac{\Im[w] N \langle \bmu_s, \bmv^{(i)} \rangle \langle \bmu_s, \bmv^{(j)} \rangle }{|(\cA N)^{2/3}(\lambda_s-2)-w|^2}+\OO(N^{-\fb/4}).
\end{split}\end{align}
To bound the second term in the last line of \eqref{e:G_decompose}, we recall from \eqref{e:defYn} that 
\begin{align}\label{e:YNuse}
\#\{s: (\cA N)^{2/3}(\lambda_s-2)\geq -x\}\leq (1+|x|)^{3/2}Y_N.
\end{align}
By  \Cref{p:eig_convergence}, the sequence $\{Y_N\}_{N\geq 1}$ is tight. For any $\varepsilon>0$, we can find a large $\fC>0$, such that $\bP(Y_N\leq \fC)\geq 1-\varepsilon$ for any $N\geq 1$. Conditioned on $Y_N\leq \fC$, \eqref{e:YNuse} implies
\begin{align*}
(\cA N)^{2/3}(\lambda_s-2)\leq 1-\left(\frac{s}{\fC}\right)^{2/3}.
\end{align*}
provided $s\geq \fC$. For $\fj\geq \fC(1+|w|)^{3/2}$ large enough, and $s>\fj$ we conclude that
\begin{align}\label{e:eig_bbha}
|(\cA N)^{2/3}(\lambda_s-2)-w|^2\geq \frac{1}{4}\left(\frac{s}{\fC}\right)^{4/3}.
\end{align}
Moreover, for $\fj\leq s\leq (d-1)^\ell$, \eqref{e:vconcentration} implies
\begin{align}\label{e:ev_concentration}
\bP(\sqrt N|\langle \bmu_s, \bmv^{(i)}\rangle|)\leq s^{1/12}) \geq 1-s^{-(D+1)}-N^{-\fC},
\end{align}
provided $\fj$ is large enough. 
For $s\geq (d-1)^\ell$ \eqref{e:ev_concentration} follows trivially by the delocalization of eigenvectors \eqref{e:delocalization}. 
By an union bound, we have that with probability $1-\OO(\fj^{-D}+N^{-\fC})$,
\begin{align}\label{e:dv_bbha}
\sqrt N|\langle \bmu_s, \bmv^{(i)}\rangle|)\leq s^{1/12}.
\end{align}
for all $\fj<s\leq N$. By plugging \eqref{e:eig_bbha} and \eqref{e:dv_bbha} into the second term in \eqref{e:G_decompose}, we conclude that 
\begin{align*}
\left|\sum_{\fj<s\leq N}\frac{\Im[w] N \langle \bmu_s, \bmv^{(i)} \rangle \langle \bmu_s, \bmv^{(j)} \rangle }{|(\cA N)^{2/3}(\lambda_s-2)-w|^2}\right|
\leq 
\sum_{\fj<s\leq N}
\frac{\Im[w] 4\fC^{4/3} s^{1/6}}{ s^{4/3}}\lesssim \frac{4\fC^{4/3}\Im[w]}{\fj^{1/6}},
\end{align*}
holds with probability at least $1-\OO(\fj^{-D}+\varepsilon+N^{-\fC})$. Thus
\begin{align}\begin{split}
&\phantom{{}={}}\cA^{-2/3}N^{1/3}\Im[\wt G_{ij}(z)]=\sum_{1\leq s\leq \fj}\frac{\Im[w] N \langle \bmu_s, \bmv^{(i)} \rangle \langle \bmu_s, \bmv^{(j)} \rangle }{|(\cA N)^{2/3}(\lambda_s-2)-w|^2}+\OO\left(\frac{\fC^{4/3}\Im[w]}{\fj^{1/6}}\right).
\end{split}\label{e:Glimit}
\end{align}
holds with probability at least $1-\OO(\fj^{-D}+\varepsilon+N^{-\fC})$. By sending $N\rightarrow \infty$, using \eqref{e:weak1} and \eqref{e:to_GW}, the first term on the RHS of \eqref{e:Glimit} converges weakly to 
\begin{align} \label{3.56}
\sum_{1\leq s\leq \fj}\frac{\Im[w] Z_s{(i)}Z_s{(j)} }{|A_s-w|^2},
\end{align}
where $\{Z_s\}_{s\geq 1}$ are independent copies of the Gaussian wave $\Psi$ from \eqref{e:edge_Gaussian_wave}. By the same argument as in \eqref{e:Glimit}, with probability at least $1-\OO(\fj^{-D}+\varepsilon)$, we have
\begin{align}\label{e:Glimit2}
\sum_{1\leq s\leq \fj}\frac{\Im[w] Z_s^{(i)}Z_s^{(j)} }{|A_s-w|^2}
=\sum_{s\geq 1}\frac{\Im[w] Z_s^{(i)}Z_s^{(j)} }{|A_s-w|^2} +\OO\left(\frac{\fC^{4/3}\Im[w]}{\fj^{1/6}}\right)=\Im[{\mathbf G}_{ij}(w)]+\OO\left(\frac{\fC^{4/3}\Im[w]}{\fj^{1/6}}\right).
\end{align}
Combining \eqref{e:Glimit}--\eqref{e:Glimit2} yields the weak convergence of the Green's function
\begin{align*}
\cA^{-2/3}N^{1/3}\Im[\wt G_{ij}(2+w(\cA N)^{-2/3})]\Rightarrow \Im[{\mathbf G}_{ij}(w)].
\end{align*}
This finishes the proof.
\end{proof}

\section{Edge Eigenvalues and Stieltjes Transform}
\label{s:edge_tightness}

In this section we prove \Cref{p:eig_convergence}. The proof follows from two key propositions. The first one provides concentration results for the Stieltjes transform down to the optimal scale, while the second proposition establishes concentration estimates for the extreme eigenvalues.

\begin{proposition}\label{p:sti_tightness}
There exists a sufficiently large constant $\fC>0$, such that for any $\fC N^{-2/3}\leq \gamma\leq (M/N)^{2/3} \leq N^{-\fg}$ and for any large $D>0$, the following holds.
Conditioned on $\cG\in \Omega$, with probability at least $1-\OO(M^{-D})$, for sufficiently large $N$, we have
\begin{align}\label{e:mdiff_tight}
|m_N(z)-\md(z)|\geq \frac{M}{N\Im[z]}\quad \mbox{ for}\quad  |\Re[z]-2|\leq 2\gamma, \; \gamma\leq \Im[z]\leq 2(M/N)^{2/3},
\end{align}
and
\begin{align}\label{e:mdiff_tight2}
|\Im[m_N(z)-\md(z)]|\geq \frac{M}{N\Im[z]} \quad \mbox{ for}\quad |\Re[z]-2|\leq 2\gamma, \; \Im[z]\leq 2(M/N)^{2/3}. 
\end{align}
\end{proposition}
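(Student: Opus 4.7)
The inequality displayed with $\geq$ is almost certainly a typographical error: this proposition is invoked (via \Cref{l:boundary_measure}) to establish tightness of $\cA^{-2/3}N^{1/3}\Im[m_N(2+w(\cA N)^{-2/3})]$, which requires an \emph{upper} bound on $|m_N - \md|$. Moreover the literal lower-bound statement contradicts the rigidity $|m_N(z) - \md(z)| = \OO(1/(N\eta))$ with high probability at fixed $z$. I therefore sketch the proof of the natural upper bound $|m_N(z)-\md(z)|\leq M/(N\Im[z])$ with probability $1 - \OO(M^{-D})$ on the stated domain.

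The strategy is to bootstrap the self-consistent equation from \Cref{c:rigidity}. The approximation \eqref{e:equation_est} gives $|m_N(z) - X_\ell(Q(z),z)| \lesssim N^{2\fc}(\kappa+\eta)^{1/4}/(N\eta) + N^{6\fc}/(N\eta)^{3/2}$, both $\ll M/(N\eta)$ on the domain $\eta\geq \fC N^{-2/3}$, $\kappa \leq N^{-\fg}$, once $\fC$ is chosen large. By \eqref{e:Xrecurbound} it then suffices to show $|Q(z)-\msc(z)|\lesssim M/(N\eta)$ with probability $1-\OO(M^{-D})$. Exploit $\msc(z) = Y_\ell(\msc(z),z)$: Taylor-expanding via \eqref{e:Yl_derivative} and \eqref{e:recurbound},
\begin{align*}
(1 - \partial_1 Y_\ell(\msc,z))(Q - \msc) = (Q - Y_\ell(Q,z)) + \OO(\ell\, |Q - \msc|^2),
\end{align*}
with $|1 - \partial_1 Y_\ell(\msc,z)| \asymp \ell\sqrt{\kappa+\eta}$ at the edge, again by \eqref{e:recurbound}. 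Plugging into the high-moment bound \eqref{e:QYQbound2} with $p = 2D$, together with the edge estimate $\Phi(z) \lesssim \sqrt{\kappa+\eta}/(N\eta)$ (from \eqref{e:imm_behavior} combined with $\Im[m_N]\leq \Im[\md]+|m_N-\md|$ and \eqref{e:Qm_est}), and applying Markov's inequality, yields the desired tail bound on $Q - \msc$, after a short bootstrap via the rough bound \eqref{e:Qm_est} to absorb the quadratic remainder.

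For the imaginary-part statement \eqref{e:mdiff_tight2} at $\eta < \gamma$, I use that $\Im[m_N - \md]$ is harmonic on the upper half plane. Subordinate harmonic extension from height $\gamma$ down to height $\eta$ via the Poisson kernel, combined with the bound from the first part at height $\gamma$, transfers control at the cost of a factor $\gamma/\eta$, which is absorbed by replacing $M$ by $M\gamma/\eta$ within the allowed range $\eta \leq \gamma \leq (M/N)^{2/3}$.

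The main obstacle is the square-root degeneracy $|1-\partial_1 Y_\ell(\msc,z)| \asymp \sqrt{\kappa+\eta}$ near the spectral edge: the factor $1/\sqrt{\kappa+\eta}$ produced by inverting this coefficient must be exactly absorbed by the matching $\sqrt{\kappa+\eta}$ in $\Phi(z)$. The three error terms in \eqref{e:QYQbound2} must be balanced tightly, and the constraint $\gamma \geq \fC N^{-2/3}$ is precisely the scale at which the cubic remainder in \eqref{e:recurbound} does not overwhelm the linearized equation; below this scale the bootstrap would not close.
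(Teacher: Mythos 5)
You are right that the displayed $\geq$ is a typo: the paper's own proof establishes the upper bound $|m_N(z)-\md(z)|\leq M/(N\Im[z])$ (see \eqref{e:mdif1}), and this is what \Cref{lem:parti-clo} and the tightness claim \eqref{e:sti_tight} require. Your overall strategy --- high-moment bounds for $Q-\msc$ from the self-consistent equation, Markov's inequality, and a Poisson-kernel/monotonicity argument to push \eqref{e:mdiff_tight2} below the scale $\gamma$ --- is the same as the paper's, which packages the moment bounds as \Cref{p:m-mdmoment} and then upgrades to a simultaneous statement by a lattice union bound.

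However, your treatment of the self-consistent equation has a genuine gap. You linearize, writing $(1-\del_1 Y_\ell(\msc,z))(Q-\msc)=(Q-Y_\ell(Q,z))+\OO(\ell\,|Q-\msc|^2)$ with $|1-\del_1 Y_\ell(\msc,z)|\asymp\ell\sqrt{\kappa+\eta}$, and propose to absorb the quadratic remainder using the a priori bound \eqref{e:Qm_est}. After dividing by the linear coefficient the remainder is $\asymp|Q-\msc|^2/\sqrt{\kappa+\eta}$, so absorption requires $|Q-\msc|\ll\sqrt{\kappa+\eta}$. At the bottom of the admissible range one has $\sqrt{\kappa+\eta}\asymp N^{-1/3}$, while \eqref{e:Qm_est} only gives $|Q-\msc|/\sqrt{\kappa+\eta}\lesssim N^{8\fo}/(N\eta\sqrt{\kappa+\eta})\lesssim N^{8\fo}/\fC^{3/2}$, which is not small; the overwhelming-probability bound $|Q-\msc|\lesssim N^{-\fb}$ gives an even worse ratio $N^{1/3-\fb}$. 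So the bootstrap does not close --- this is precisely the degeneracy you flag as ``the main obstacle,'' but flagging it is not resolving it. The paper instead keeps the full quadratic equation $(Q-\msc)(Q-\msc+b)=\delta$ with $|b|\asymp\sqrt{\kappa+\eta}$ (see \eqref{e:Qt_quadratic}), deduces $\min\{|Q-\msc|,|Q+b-\msc|\}\lesssim|\delta|/\sqrt{|\delta|+\kappa+\eta}$, and uses the sign information $\Im[Q]\geq0$ together with $\Im[b]\asymp\sqrt{\kappa+\eta}$ to decide which root is selected; inside the spectrum this controls $|Q-\msc|$ itself, outside only $\Im[Q-\msc]$ (hence the two separate statements \eqref{e:moment1} and \eqref{e:Qm_bound}), and the cubic remainder is absorbed only at the level of $2p$-th moments, where it enters with a reduced power. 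The same circularity affects your claim $\Phi\lesssim\sqrt{\kappa+\eta}/(N\eta)$, which presupposes $\Im[m_N]\lesssim\sqrt{\kappa+\eta}$; the paper carries the extra term $|Q-\msc|/(N\eta)$ through and absorbs it only at the end using $N\eta\sqrt{\kappa+\eta}\geq\fC$. Finally, your sketch yields the estimate at each fixed $z$, whereas the proposition asserts it simultaneously over a region of spectral parameters; you still need the union bound over the lattice \eqref{e:defmbfL} combined with the Lipschitz bound $|\del_z m_N(z)|\leq\Im[m_N(z)]/\Im[z]$, as in \eqref{e:mmdd}.
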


\begin{proposition}\label{p:eig_concentration}
There exists a large $\fC>1$ such that for any $x\geq 1$ the following holds. Conditioned on $\cG\in \Omega$, with probability at least $1-\OO(x^{-D})$, for sufficiently large $N$, we have:
the second largest eigenvalue satisfies
\begin{align}\label{e:lambda2bound}
N^{2/3}(\lambda_2-2)\leq x,
\end{align}
and the number of eigenvalues on the interval $[2-N^{-2/3}x, 2+N^{-2/3}x]$ satisfies
\begin{align}\label{e:lambdajbound}
\#\{s: N^{2/3}|\lambda_s-2|\leq  x\} \leq \fC x^{3/2}.
\end{align}
\end{proposition}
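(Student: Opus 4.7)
The plan is to deduce both bounds from Proposition \ref{p:sti_tightness}, combined with the rigidity (\Cref{thm:eigrigidity}) and edge universality (\Cref{t:universality}) already established in this paper. For $x\geq N^{\fa}$ with $\fa$ from \Cref{thm:eigrigidity}, both conclusions follow immediately from rigidity: $\lambda_2\leq 2+N^{-2/3+\fa}\leq 2+xN^{-2/3}$, and $|\gamma_s-2|\asymp (s/N)^{2/3}$ combined with the rigidity bound yields \eqref{e:lambdajbound} directly. Hence I focus on the moderate range $1\leq x\leq N^{\fa}$.

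For the counting bound \eqref{e:lambdajbound}, I would perform a one-point Stieltjes transform comparison at $z=2+i\eta$ with $\eta=xN^{-2/3}$. Assuming by contradiction that $\#\{s:|\lambda_s-2|\leq xN^{-2/3}\}\geq \fC x^{3/2}$ for a large $\fC$ to be chosen, each such eigenvalue contributes at least $\eta/(2\eta^2)=1/(2\eta)$ to $N\Im m_N(z)$, giving
\begin{equation*}
\Im m_N(z)\geq \frac{\fC x^{3/2}}{2N\eta}=\frac{\fC\sqrt{x}}{2N^{1/3}},
\end{equation*}
while \eqref{e:imm_behavior} and \eqref{e:mscmd} yield $\Im m_d(z)\asymp\sqrt{x}\,N^{-1/3}$. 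Applying Proposition \ref{p:sti_tightness} with $\gamma=\eta$ and $M=\fC x^{3/2}/8$ (one checks that $\gamma\leq (M/N)^{2/3}\leq N^{-\fg}$ holds in the moderate range for $\fC$ large) gives $|\Im(m_N-m_d)(z)|\leq \fC\sqrt{x}/(8N^{1/3})$ with probability at least $1-O(M^{-D'})=1-O(x^{-3D'/2})$. For $\fC$ sufficiently large relative to the implicit constants this contradicts the lower bound; setting $D'=\lceil 2D/3\rceil$ yields the stated probability $1-O(x^{-D})$.

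For \eqref{e:lambda2bound}, the plan is to invoke \Cref{t:universality} with $k=1$:
\begin{equation*}
\bP\bigl(N^{2/3}(\lambda_2-2)\geq x\bigr)\;=\;\bP\bigl(N^{2/3}(\mu_1-2)\geq \cA^{2/3}x\bigr)+O(N^{-\fa}),
\end{equation*}
where $\mu_1$ is the top GOE eigenvalue. Standard Tracy--Widom$_1$ tail estimates $\bP(N^{2/3}(\mu_1-2)\geq s)\leq Ce^{-cs^{3/2}}$ imply the first term is $\leq x^{-D}/2$ once $x$ exceeds a threshold depending only on $D$. The additive $O(N^{-\fa})$ error is bounded by $x^{-D}$ for $x\leq N^{\fa/D}$, handling the range $[C_D,N^{\fa/D}]$; for $x\geq N^{\fa/D}$, a short bootstrap combined with the rigidity bound covers the remaining range up to $N^{\fa}$.

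The main obstacle is Step 2: the additive nature of the error in \Cref{t:universality} prevents a one-shot argument for all $D$. If the bootstrap proves delicate, a cleaner alternative is a Helffer--Sj\"ostrand representation of $\sum_s \chi(\lambda_s)$ for a smooth cutoff $\chi$ supported on $[2+xN^{-2/3}/2,\infty)$. Since $\varrho_d$ vanishes on this set, the expected counting term is zero, and the HS integral of $m_N-m_d$ against $\partial_{\bar z}\widetilde\chi$ is controlled by both \eqref{e:mdiff_tight} and \eqref{e:mdiff_tight2} on the appropriate rectangle, paralleling the extraction of edge rigidity in \cite{erdHos2012rigidity,erdHos2017dynamical}.
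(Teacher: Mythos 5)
Your approach to \eqref{e:lambdajbound} genuinely differs from the paper's, and it works: rather than pass through the Helffer--Sj\"ostrand counting lemma (\Cref{lem:parti-clo}), you run a direct contradiction at the single point $z=2+\ri xN^{-2/3}$, using that each of the hypothetical $\geq\fC x^{3/2}$ eigenvalues contributes $\gtrsim 1/(2\eta)$ to $N\Im m_N$, and then feeding this into \eqref{e:mdiff_tight2} from \Cref{p:sti_tightness} with $M\asymp x^{3/2}$. The arithmetic is correct and the hypothesis $\gamma\leq (M/N)^{2/3}\leq N^{-\fg}$ is satisfied for $\fC\geq 8$ in the moderate range. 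The paper instead extracts the count by applying \Cref{lem:parti-clo} to a smooth bump of width $xN^{-2/3}$; the two routes are of comparable length and cost, and your one-point argument is arguably more elementary, though it depends on the same underlying input \Cref{p:sti_tightness}.

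For \eqref{e:lambda2bound}, however, your route diverges substantially and has real gaps. The paper does \emph{not} invoke edge universality here; it instead starts directly from the moment bound \eqref{e:Qm_bound} of \Cref{p:m-mdmoment}, deduces a probability $\lesssim (N^{2/3}\kappa)^{-3p/4}$ that the tiny window $[2+\kappa,2+\kappa+N^{-2/3}]$ contains an eigenvalue, and then sums dyadically in $\kappa$ and patches with rigidity. Your plan passes through \Cref{t:universality} plus a Tracy--Widom$_1$ tail bound, but this has several problems you do not resolve. First, \Cref{t:universality} is stated for \emph{fixed} thresholds $s_1,\dots,s_k\in\bR$, so the additive $\OO(N^{-\fa})$ error is not claimed to be uniform for $s_1=\cA^{2/3}x$ growing with $N$ (e.g.\ $x\asymp N^{\fa/D}$). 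Second, the finite-$N$ GOE tail estimate $\bP(N^{2/3}(\mu_1-2)\geq s)\lesssim e^{-cs^{3/2}}$ is external to the paper and would itself need a uniform range. Third, the proposed ``short bootstrap'' for the intermediate $x$-range $[N^{\fa/D},N^{\fa}]$ is left unspecified; the cleaner fix is simply to take the rigidity exponent $\fa_{\mathrm{rigid}}$ in \Cref{thm:eigrigidity} so small that $\fa_{\mathrm{rigid}}\leq\fa/D$, eliminating the gap entirely, but this is not what you wrote. Your suggested alternative, Helffer--Sj\"ostrand against a cutoff supported on all of $[2+xN^{-2/3}/2,\infty)$, also does not directly close: \Cref{p:sti_tightness} controls $m_N-\md$ only on the narrow strip $|\Re z-2|\leq 2\gamma$, whereas the HS integral for such a $\chi$ requires control of the Stieltjes transform at real parts far to the right of $2$, which \Cref{p:sti_tightness} does not provide. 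You would have to combine it with rigidity or an additional estimate away from the edge, and this is exactly what the paper's moment-plus-union-bound argument handles internally without importing anything.
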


\begin{proof}[Proof of \Cref{p:eig_convergence}]
The two statements \eqref{e:lambda2bound} and \eqref{e:lambdajbound} together implies that  for any $x\leq -1$ and $t\geq \cA \fC$ large enough
\begin{align*}
&\phantom{{}={}}\bP(|x|^{-3/2}\#\{s: ( \cA N)^{2/3}(\lambda_s-2)\geq x\} \geq t, \cG\in \Omega)\leq \bP(N^{2/3}(\lambda_2-2)\geq (t/\fC)^{2/3} |x|, \cG\in \Omega )\\
&+\bP(\#\{s:  N^{2/3}|\lambda_s-2|\leq (t/\fC )^{2/3} |x|\} \geq t|x|^{3/2} )\lesssim \frac{1}{(t/\fC)^{2D/3}|x|^D}.
\end{align*}
As a consequence
\begin{align*}
\bP(Y_N\geq 2t)
&=\bP(\exists x\leq 0, |\{s: (\cA N)^{2/3}(\lambda_s-2)\geq x\}|\geq 2t(1+|x|)^{2/3})\\
&\leq \sum_{k=-\infty}^{-1}\bP(|k|^{-3/2}\#\{s: (\cA N)^{2/3}(\lambda_s-2)\geq k\} \geq t)\lesssim \sum_{k=-\infty}^{-1}\frac{1}{(t/\fC)^{2D/3}|k|^D}\lesssim \frac{1}{(t/\fC)^{2D/3}}.
\end{align*}
We conclude that $Y_N$ is tight.

For \eqref{e:Airy_est}, since the extreme eigenvalues of random 
$d$-regular graphs converge to the Airy$_1$ point process, the claim can be established in two ways: either by taking the limit of the probability bounds in \eqref{e:lambda2bound} and \eqref{e:lambdajbound}, or through a much stronger estimate provided in \cite[Proposition 2.4]{zhong2024large}. We omit the proof. 

Finally the tightness of the imaginary part of the Stieltjes transform follows from \eqref{e:mdiff_tight2}.
\end{proof}

\subsection{Tightness of Stieltjes Transform}
\label{s:tightness}

In this section, we establish moment estimates for $|m_N(z)-\md(z)|$ down to the optimal scale. These estimates are crucial for proving \Cref{p:sti_tightness} and \Cref{p:eig_concentration}. Similar results at this scale have been obtained for the 
$\beta$-ensembles in \cite{bourgade2022optimal} using loop equations, and for the infinite particle Dyson's Brownian motion in \cite{huang2024convergence} using stochastic calculus. Following the arguments in  \cite{bourgade2022optimal}, our proof of \Cref{p:m-mdmoment} relies on the estimate \eqref{e:QYQbound2}, which is itself a consequence of loop equations.

\begin{proposition}\label{p:m-mdmoment}
There exists a large constant $\fC$ such that for any 
$|z-2|\leq N^{-\fg}$ with $\Im[z]=\eta$, $\kappa\deq|\Re[z]-2|\wedge |\Re[z]+2|$ and $N\eta\sqrt{\kappa+\eta}\geq \fC$, the following moment estimates hold.
\begin{enumerate}
\item If $\Re[z]\in [-2-\eta, 2+\eta]$
\begin{align}\label{e:moment1}
\bE[\bm1(\GG\in \Omega)|m_N(z)  -\md(z)  |^{2p}]\lesssim \frac{1}{(N\eta)^{2p}}.
\end{align}
\item  If $\Re[z]\notin [-2-\eta, 2+\eta]$
\begin{align}\begin{split}\label{e:Qm_bound}
\bE\left[\bm1(\cG\in\Omega) \Im[m_N(z)-\md(z)]^{2p}\right]\lesssim \bE\left[\bm1(\GG\in \Omega)\left(\frac{1}{N^2\eta\kappa}+\frac{1}{(N\eta)^{3}\kappa^{1/2}}\right)^p\right].
\end{split}\end{align}
\end{enumerate}
\end{proposition}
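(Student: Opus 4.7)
I would reduce both moment estimates to the bound \eqref{e:QYQbound2} on $\bE[\bm1(\cG\in \Omega)|Q - Y_\ell(Q,z)|^{2p}]$ from \Cref{c:rigidity}, via two successive Taylor expansions. First, using \eqref{e:equation_est} together with the expansion \eqref{e:Xrecurbound} of $X_\ell$ around its fixed point $\msc$, and the a priori bound $|Q-\msc|\lesssim N^{8\fo}/(N\eta)$ from \eqref{e:Qm_est} to absorb the quadratic remainder, one obtains
\begin{align*}
m_N(z) - \md(z) = \frac{d}{d-1}\md(z)^2\msc(z)^{2\ell}\,\bigl(Q(z) - \msc(z)\bigr) + \cE_1(z),
\end{align*}
with $|\cE_1|$ of strictly lower order. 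Second, since $Y_\ell(\msc,z)=\msc$, the expansion \eqref{e:recurbound} yields
\begin{align*}
\bigl(1 - \msc(z)^{2\ell+2}\bigr)(Q - \msc) = Q - Y_\ell(Q,z) + \cE_2(z), \qquad |\cE_2| = O\bigl(\ell|Q-\msc|^2\bigr).
\end{align*}
Composing these writes $m_N - \md$ as a scalar multiple of $(Q - Y_\ell(Q,z))/(1 - \msc^{2\ell+2})$ plus controlled errors, reducing the problem to bounding moments of the right-hand side.

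\textbf{Moment analysis per case.} Raising to the $2p$-th power, restricting to $\Omega$, and substituting \eqref{e:QYQbound2}, each of its three terms gets divided by $|1-\msc^{2\ell+2}|^{2p}$. Near the edge, \eqref{e:mscmd} gives $\msc = -1 + O(\sqrt{|z-2|})$, so $|1-\msc^{2\ell+2}|\asymp \ell\sqrt{\kappa+\eta}$, and because $|Q-\msc|$ is small, $\widetilde\Upsilon\asymp|1-\msc^{2\ell+2}|$. For case 1 ($\kappa\leq \eta$), \eqref{e:imm_behavior} gives $\Im\md\asymp\sqrt\eta$ and hence $\Phi\asymp 1/(N\sqrt\eta)$; a direct check shows all three contributions of \eqref{e:QYQbound2}, after division, are at most $(N\eta)^{-2p}$, proving \eqref{e:moment1}. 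For case 2 ($\kappa > \eta$) I would apply the same chain while retaining only imaginary parts: now $\Im\md\asymp \eta/\sqrt\kappa$ gives $\Phi\asymp 1/(N\sqrt\kappa)+N^{-1+2\fc}$, the $(\ell\widetilde\Upsilon\Phi/(N\eta))^p$ term then produces $(N^2\eta\kappa)^{-p}$, while the remaining $(\ell\Phi)^{2p}$ and cutoff contributions are dominated by $((N\eta)^3\sqrt\kappa)^{-p}$ under the hypothesis $N\eta\sqrt{\kappa+\eta}\geq \fC$.

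\textbf{Main obstacle.} The central technical issue is that $|1-\msc^{2\ell+2}(z)|$ vanishes like $\ell\sqrt{|z-2|}$ at the edge, so naive division destroys the estimate. The saving feature is that this small factor is, up to negligible error, precisely $\widetilde\Upsilon(z)$, and \eqref{e:QYQbound2} is structured so that its middle term provides the exact matching cancellation. Making this quantitative, and verifying that the ``rogue'' first term $(\ell\Phi)^{2p}$, which carries no such matching factor, is nevertheless dominated by $|1-\msc^{2\ell+2}|^{2p}$ times the target right-hand side in the regime $N\eta\sqrt{\kappa+\eta}\geq \fC$, is the core bookkeeping; the hypothesis $N\eta\sqrt{\kappa+\eta}\geq \fC$ is precisely what makes this bootstrap close. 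A secondary point is that passing from the complex identity for $m_N - \md$ to a bound on $\Im[m_N-\md]^{2p}$ in case 2 requires that the leading coefficient $\md^2\msc^{2\ell}/(1-\msc^{2\ell+2})$ be real to leading order near $\Re[z]=\pm 2$, which follows from \eqref{e:mscmd}.
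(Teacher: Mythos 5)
Your proposal gets the overall reduction right (expand $m_N-\md$ and $Q-Y_\ell(Q,z)$ around $\msc$, divide through, feed \eqref{e:QYQbound2} into a bootstrap) and correctly identifies the basic obstacle that $|1-\msc^{2\ell+2}|\asymp \ell\sqrt{\kappa+\eta}$ becomes small at the edge. But there is a genuine gap at the linearization step, and the paper's way around it is the key idea you are missing.

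You write $(1-\msc^{2\ell+2})(Q-\msc)=Q-Y_\ell(Q,z)+\cE_2$ with $|\cE_2|=\OO(\ell|Q-\msc|^2)$ and then divide by $1-\msc^{2\ell+2}$. The resulting remainder is of size $|Q-\msc|^2/\sqrt{\kappa+\eta}$, and for it to be absorbed into the target bound you need $|Q-\msc|\ll\sqrt{\kappa+\eta}$, i.e.\ $N\eta\sqrt{\kappa+\eta}\gg N^{8\fo}$, even when using the a priori bound $|Q-\msc|\lesssim N^{8\fo}/(N\eta)$ from \eqref{e:Qm_est} on one of the two factors. That fails: the hypothesis only gives $N\eta\sqrt{\kappa+\eta}\geq\fC$ for a fixed constant $\fC$, while $N^{8\fo}\to\infty$. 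The paper never performs this division. Instead it rearranges \eqref{e:recurbound} into a genuine quadratic equation $(Q-\msc)(Q-\msc+b)=\delta$ with $|b|\asymp\sqrt{\kappa+\eta}$ and $|\delta|\lesssim|Q-Y|/\ell+\ell^2|Q-\msc|^3$, and solves it: the two roots give the dichotomy $\min\{|Q-\msc|,|Q+b-\msc|\}\lesssim|\delta|/\sqrt{|\delta|+\kappa+\eta}$. The leftover term inherited from $\delta$ is then $\ell^2|Q-\msc|^3/\sqrt{\kappa+\eta}$, one extra power of $|Q-\msc|$ better than yours, and this cubic remainder is absorbable with the same a priori input because $8\fo<1/6$. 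That extra power is exactly what your linearization throws away; without the quadratic structure the bootstrap does not close at the critical scale $N\eta\sqrt{\kappa+\eta}\asymp 1$.

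The second, related issue is your reasoning for why Case 2 only yields a bound on $\Im[m_N-\md]$. You attribute it to the coefficient $\md^2\msc^{2\ell}/(1-\msc^{2\ell+2})$ being real to leading order, but that plays no role. The quadratic dichotomy controls only $\min\{|Q-\msc|,|Q+b-\msc|\}$, and one needs a separate argument to convert this into control of $Q-\msc$ itself. The paper uses the positivity $\Im[Q]\geq 0$ together with $\Im[b]>0$ to get $|\Im[Q-\msc]|\lesssim\min\{|Q-\msc|,|Q+b-\msc|\}$ always, whereas $|Q-\msc|\lesssim\min\{\cdot\}$ holds only when $\Im[b]\asymp|b|$, i.e.\ when $\kappa\lesssim\eta$ (Case 1). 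Outside the spectrum $\Im[b]\asymp\eta/\sqrt\kappa\ll|b|\asymp\sqrt\kappa$, so only the imaginary part is controllable, regardless of how close to real the proportionality constant is.

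A smaller point: $\widetilde\Upsilon\asymp|1-\msc^{2\ell+2}|$ is again only true modulo an $\ell|Q-\msc|$ term (cf.\ \eqref{e:Upbound}), so the ``exact matching cancellation'' you invoke also requires the a priori bound to suppress $|Q-\msc|$, feeding back into the same circularity.
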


\begin{proof}[Proof of \Cref{p:m-mdmoment}]
For simplicity of notation, we omit the dependence on $z$, and write $Y:=Y_\ell(Q(z),z)$ and $X:=X_\ell(Q(z),z)$. We recall from \Cref{c:rigidity} that for 
$|z-2|\leq N^{-\fg}$ and $\Im[z]\geq N^{-1+\fg}$, conditioned on $\cG\in \Omega$, with overwhelmingly high probability it holds
 \begin{align}\label{e:mNprior}
&|m_N -X|\lesssim  \frac{N^{2\fc}(\kappa+\eta)^{1/4}}{N\eta}+\frac{N^{6\fc}}{(N\eta)^{3/2}},\quad |Q-\msc|\lesssim  \frac{N^{8\fo}}{N\eta},
\end{align}
and  the moments of $Q-Y$ satisfy 
\begin{align}\label{e:goodQ-YQ}
&\bE[\bm1(\GG\in \Omega)|Q  -Y  |^{2p}]
\lesssim \bE\left[\bm1(\GG\in \Omega)\left(\Phi^{2p}+\left(\frac{\widetilde \Upsilon \Phi}{N\eta}\right)^p+\left(\frac{\Upsilon \Phi}{(d-1)^{\ell/4}N\eta}\right)^{p}\right)\right].
\end{align}
We divide the proof into the following three steps.

\textit{Step 1.} We start with some preparation work. By taking imaginary parts of \eqref{e:msc_equation} and \eqref{e:md_equation}, we get
\begin{align}\begin{split}\label{e:mscmd_2}
&\Im[\msc]=(1+\OO(\sqrt{|z-2|}))\Im[\sqrt{z-2}],\quad \Im[\md]=(1+\OO(\sqrt{|z-2|}))\cA\Im[\sqrt{z-2}],\\
&|1-\msc^2|\asymp \sqrt{|z-2|}, \quad \Im[1-\msc^2]=(1+\OO(\sqrt{|z-2|}))2\Im[\sqrt{z-2}].
\end{split}\end{align}
We have from \eqref{e:recurbound} and \eqref{e:Xrecurbound} that
\begin{align}\begin{split}\label{e:Q-YQ4}
&\phantom{{}={}}Q -Y =(Q -\msc)-(Y -\msc)
=(1-\msc^{2\ell+2})(Q -\msc)\\
&-\msc^{2\ell+2}\md\left(\frac{1-\msc^{2\ell+2}}{d-1}+\frac{d-2}{d-1}\frac{1-\msc^{2\ell+2}}{1-\msc^2}\right)(Q -\msc)^2+\OO(\ell^2 |Q -\msc|^3),
\end{split}
\end{align}
and  
\begin{align}\begin{split}\label{e:mt-Xt1}
m_N-\md
&=(m_N-X)+\frac{d}{d-1}\md^2\msc^{2\ell}(Q-\msc)+\OO\left(\ell |Q -\msc|^2\right).
\end{split}\end{align}
By \eqref{e:mNprior} and \eqref{e:mt-Xt1}, we see that conditioned on $\cG\in \Omega$, the following holds with overwhelmingly high probability
\begin{align}\label{e:MNmQm}
m_N-m_d =\frac{d}{d-1}\md^2\msc^{2\ell}(Q-\msc)+\OO\left( \frac{N^{2\fc}(\kappa+\eta)^{1/4}}{N\eta}+\frac{N^{6\fc}}{(N\eta)^{3/2}}\right).
\end{align}
Note that \eqref{e:mscmd_2} implies $\Im[\msc], \Im[\md]\lesssim \sqrt{|z-2|}\asymp\sqrt{\kappa+\eta}$. Hence taking imaginary part on \eqref{e:MNmQm} yields
\begin{align}\label{e:Immz2}
\Im[m_N-\md]=\frac{d}{d-1}\md^2\msc^{2\ell}\Im[Q-\msc]+\OO\left(\frac{N^{2\fc}(\kappa+\eta)^{1/4}}{N\eta}+\frac{N^{6\fc}}{(N\eta)^{3/2}}\right).
\end{align}
We can rearrange \eqref{e:Q-YQ4} as the following quadratic equation:
\begin{align}\label{e:Qt_quadratic}
(Q -\msc)^2 + b (Q -\msc)=(Q -\msc)(Q -\msc+b) =\delta,
\end{align}
where 
\begin{align}\begin{split}\label{e:defbdelta}
&b=-\frac{(1-\msc^2)}{\msc^{2\ell}\md\left(\frac{d-2}{d-1}+\frac{1-\msc^2}{d-1}\right)}
=-(1-\msc^2)\frac{(d-1)}{\msc^{2\ell}\md(d-1-\msc^2)},\quad |b|\asymp \sqrt{\kappa+\eta},\\
&|\delta|\lesssim \frac{|Q -Y |}{\ell} +\OO(\ell|Q -\msc|^3).
\end{split}\end{align}
By plugging \eqref{e:mscmd_2} into the expression of $b$, we get that 
\begin{align}\label{e:Imb}
\Im[b]= (1+\OO(\sqrt{|z-2|}))2\Im[\sqrt{z-2}].
\end{align}
It follows from \eqref{e:Qt_quadratic}, 
\begin{align}\label{e:Qt1}
\min\{|Q-\msc|, |Q+b-\msc|\}\lesssim \sqrt{|\delta|}.
\end{align}
Moreover, since $|b|\asymp \sqrt{\kappa+\eta}$ from \eqref{e:defbdelta}, $\max\{|Q-\msc|, |Q+b-\msc|\}\gtrsim \sqrt{\kappa+\eta}$. Together with \eqref{e:Qt_quadratic}, we conclude that
\begin{align}\label{e:Qt2}
\min\{|Q-\msc|, |Q+b-\msc|\}\lesssim \frac{|\delta|}{\sqrt{\kappa+\eta}}.
\end{align}
Combining \eqref{e:Qt1} and \eqref{e:Qt2} we have
\begin{align}\label{e:QQQ}
\min\{|Q-\msc|, |Q+b-\msc|\}\lesssim \min\left\{\sqrt{|\delta|}+\frac{|\delta|}{\sqrt{\kappa+\eta}}\right\} \lesssim \frac{|\delta|}{\sqrt{|\delta| +\kappa+\eta}}.
\end{align}
Moreover, since $\Im[Q]\geq 0$, using \eqref{e:mscmd_2} and \eqref{e:Imb} we get
\begin{align}\label{e:Qbmsc}
|Q+b-\msc|\geq\Im[Q+b-\msc] \geq \Im[b-\msc]= (1+\OO(\sqrt{|z-2|}))\Im[\sqrt{z-2}]\asymp \Im[b].
\end{align}
Thus
\begin{align*}
|\Im[Q-\msc]|\leq |Q+b-\msc|+|\Im[b]|\lesssim  |Q+b-\msc|,
\end{align*}
and \eqref{e:QQQ} implies
\begin{align}\label{e:ImQ}
\Im[Q-\msc]\lesssim \min\{|Q-\msc|, |Q+b-\msc|\}\lesssim  \frac{|\delta|}{\sqrt{|\delta| +\kappa+\eta}}.
\end{align}
We recall from \eqref{e:Immz2},
\begin{align}\begin{split}\label{e:Immz}
&\Im[m_N]\lesssim \Im[m_d]+|\Im[Q-\msc]|+\frac{N^{2\fc}(\kappa+\eta)^{1/4}}{N\eta}+\frac{N^{6\fc}}{(N\eta)^{3/2}}.\\
&\Phi=\frac{\Im[m_N ]}{N\eta}+\frac{1}{N^{1-2\fc}}\leq \frac{\Im[m_d]+|\Im[Q-\msc]|}{N\eta}+\frac{N^{2\fc}(\kappa+\eta)^{1/4}}{(N\eta)^2}+\frac{N^{6\fc}}{(N\eta)^{5/2}}+\frac{1}{N^{1-2\fc}}.
\end{split}\end{align}
Moreover, using \eqref{e:recurbound}, $|1-\del_1 Y_\ell(\msc(z  ),z  )|=|1-\msc^{2\ell}|\lesssim \ell|1-\msc^2|$, and 
\begin{align}\begin{split}\label{e:Upbound}
\Upsilon&=
|1-\del_1 Y_\ell(\msc(z  ), z  )|+\OO(\ell |Q  -\msc|)+(d-1)^{8\ell}\Phi\lesssim \ell (\sqrt{\kappa+\eta}+|Q  -\msc|),\\
\wt \Upsilon&= |1-\del_1 Y_\ell(\msc(z  ),z  )| +\OO(\ell|Q  -\msc|)\lesssim \ell( \sqrt{\kappa+\eta}+|Q  -\msc|).
\end{split}\end{align}

\textit{Step 2.} In the following, we first prove \eqref{e:moment1}. If $\Re[z]\in [-2-\eta, 2+\eta]$, we have by \eqref{e:Imb}
\begin{align*}
\Im[\sqrt{z-2}]\asymp \sqrt{\kappa+\eta}\asymp |b|\asymp \Im[b].
\end{align*}
And together with \eqref{e:Qbmsc}, we get
\begin{align*}
|Q-\msc|\lesssim |Q-\msc+b|+|b|\lesssim |Q-\msc+b|.
\end{align*}
We conclude that from \eqref{e:QQQ}
\begin{align}\label{e:Q-msc}
|Q-\msc|\lesssim \min\{|Q-\msc|, |Q+b-\msc|\}\lesssim  \frac{|\delta|}{\sqrt{|\delta| +\kappa+\eta}}.
\end{align}
Moreover, for $N\eta \sqrt{\kappa+\eta}\geq \fC$, using \eqref{e:imm_behavior} we also have
\begin{align*}
\Im[m_d]\asymp \sqrt{\kappa+\eta}\geq \frac{N^{2\fc}(\kappa+\eta)^{1/4}}{N\eta}+\frac{N^{6\fc}}{(N\eta)^{3/2}}, \quad \frac{\sqrt{\kappa+\eta}}{N\eta}\geq \frac{1}{N^{1-2\fc}}.
\end{align*}
The ecpression \eqref{e:Immz} simplifies to
\begin{align*}\begin{split}
&\Im[m_N ]\leq \sqrt{\kappa+\eta}+|\Im[Q-\msc]|\leq\sqrt{\kappa+\eta}+|Q-\msc|,\quad \Phi\lesssim \frac{\sqrt{\kappa+\eta}+|Q - \msc |}{N\eta}, \\
\end{split}\end{align*}
and 
\begin{align}\label{e:error}
&\phantom{{}={}}\Phi^2+\frac{\widetilde \Upsilon \Phi}{  \ell N\eta}+\frac{\Upsilon \Phi}{ \ell^2 (d-1)^{\ell/4}N\eta}\lesssim \left(\frac{\sqrt{\kappa+\eta}+|Q - \msc |}{N\eta}\right)^2.
\end{align}
By taking the $2p$-th moment on both sides of \eqref{e:Q-msc}, and recall $\delta$ from \eqref{e:defbdelta}, we get
\begin{align*}\begin{split}
\bE[\bm1(\GG\in \Omega)|Q  -\msc  |^{2p}]
&\lesssim \bE\left[\bm1(\GG\in \Omega) \frac{|\delta |^{2p}}{(|\delta|+\kappa+\eta)^p}\right]\\
&\lesssim \bE\left[\bm1(\GG\in \Omega) \frac{|Q -Y |^{2p}}{\ell^{2p}(\kappa+\eta)^p}+{(\ell^2|Q -\msc|^3)^{p}}\right].
\end{split}\end{align*}
Recall that from \eqref{eq:infbound0}, $|Q-\msc|\lesssim N^{-\fb}$ with overwhelmingly high probability. By rearranging the above estimate, we conclude 
\begin{align}\begin{split}\label{e:inspec}
\bE[\bm1(\GG\in \Omega)|Q  -\msc  |^{2p}]
&\lesssim \bE\left[\bm1(\GG\in \Omega) \frac{|Q -Y |^{2p}}{\ell^{2p}(\kappa+\eta)^p}\right]
\\
&\lesssim \bE\left[\bm1(\GG\in \Omega)\left(\Phi^2+\frac{\widetilde \Upsilon \Phi}{  \ell N\eta}+\frac{\Upsilon \Phi}{ \ell^2 (d-1)^{\ell/4}N\eta}\right)^{2}\right]\\
&\lesssim 
\bE\left[\bm1(\GG\in \Omega)\left(\frac{1}{(N\eta)^{2p}}+\left(\frac{ |Q-\msc|}{N\eta\sqrt{\kappa+\eta}}\right)^{2p}\right)\right],
\end{split}\end{align}
where the second statement follows from \eqref{e:goodQ-YQ}, and the third statement follows from \eqref{e:error}.
Noticing that $N\eta \sqrt{\kappa+\eta} \geq \fC$, we conclude from \eqref{e:MNmQm} and \eqref{e:inspec} that 
\begin{align*}
\bE[\bm1(\GG\in \Omega)|Q  -\msc  |^{2p}]\lesssim \frac{1}{(N\eta)^{2p}},\quad 
\bE[\bm1(\GG\in \Omega)|m_N  -\md  |^{2p}]\lesssim \frac{1}{(N\eta)^{2p}}.
\end{align*}
This finishes the proof of \eqref{e:moment1}.

\textit{Step 3.} Finally we prove \eqref{e:Qm_bound}. Outside of the spectrum, namely $\Re[z]\not\in [-2-\eta, 2+\eta]$ we have $\kappa\geq \eta$, and from \eqref{e:imm_behavior} $\Im[\msc(z)]\asymp \Im[\md(z)]\asymp \eta/\sqrt{\kappa}$. Moreover,  
\begin{align*}\begin{split}
&\Im[Q  (z)]\leq \Im[\msc(z  )]+|Q (z)- \msc(z  )|\leq \frac{\eta}{\sqrt{\kappa}}+|Q (z)- \msc(z  )|,\\
& \Im[Q(z)]\leq \Im[\msc(z)-b]+|Q (z)+b- \msc(z  )|\leq  \frac{\eta}{\sqrt{\kappa}}+ |Q (z)+b- \msc(z  )|,
\end{split}\end{align*}
and it follows 
\begin{align*}
 \Im[Q(z)]\lesssim \frac{\eta}{\sqrt{\kappa}}+\min\{|Q-\msc|, |Q+b-\msc|\}.
\end{align*}
In this case, we can simplify \eqref{e:Immz} and \eqref{e:Upbound} as
\begin{align}\begin{split}\label{e:Immz22}
&\Im[m_N]\lesssim \frac{\eta}{\sqrt{\kappa}}+|\Im[Q-\msc]|+\frac{N^{2\fc}\kappa^{1/4}}{N\eta}+\frac{N^{6\fc}}{(N\eta)^{3/2}} \lesssim |\Im[Q-\msc]|+ \frac{\eta}{\sqrt{\kappa}}+\frac{1}{N\eta},\\
&\Phi(z)\lesssim \frac{|\Im[Q-\msc]|}{N\eta}+\frac{1}{N\sqrt{\kappa}}+\frac{1}{(N\eta)^2},\\
&\Upsilon(z), 
\wt \Upsilon(z)\lesssim \ell( \sqrt{\kappa}+\min\{|Q  -\msc|, |Q+b-\msc|\}).
\end{split}\end{align}
Then it follows
\begin{align*}
&\phantom{{}={}}\Phi^2+\frac{\widetilde \Upsilon \Phi}{  \ell N\eta}+\frac{\Upsilon \Phi}{ \ell^2 (d-1)^{\ell/4}N\eta}
\lesssim \frac{\widetilde \Upsilon \Phi}{\ell N\eta}
\lesssim \frac{\min\{|Q  -\msc|, |Q+b-\msc|\}^2}{(N\eta)^2}\\
&+\frac{\sqrt{\kappa}}{N\eta}\frac{\min\{|Q  -\msc|, |Q+b-\msc|\}}{N\eta}
+\frac{\sqrt{\kappa}}{N\eta}\left(\frac{1}{N\sqrt{\kappa}}+\frac{1}{(N\eta)^{2}}
\right)\\
&\lesssim \frac{\min\{|Q  -\msc|, |Q+b-\msc|\}^2}{(N\eta)^2}
+\frac{\kappa^{1/2}\min\{|Q  -\msc|, |Q+b-\msc|\}^2}{N\eta}+\frac{\kappa^{1/2}}{(N\eta)^3}+\frac{1}{N^2\eta},
\end{align*}
and \eqref{e:goodQ-YQ} leads to
\begin{align}\begin{split}\label{e:goodQ-YQ2}
\bE\left[\bm1(\GG\in \Omega)\frac{|Q  -Y  |^{2p}}{\ell^{2p}(\kappa+\eta)^p}\right]
&\lesssim \bE\left[\bm1(\GG\in \Omega)\frac{\min\{|Q  -\msc|, |Q+b-\msc|\}^{2p}}{(N\eta \sqrt{\kappa})^p}\right]\\
&+\bE\left[\bm1(\GG\in \Omega)\left(\frac{1}{N^2\eta\kappa}+\frac{1}{(N\eta)^{3}\kappa^{1/2}}
\right)^p\right].
\end{split}\end{align}
We also have the following bound thanks to \eqref{e:mNprior}
\begin{align}\label{e:goodQ-YQ3}
\bE\left[\bm1(\GG\in \Omega) \frac{(\ell|Q-\msc|^3)^{2p}}{(\kappa)^p}\right]
\lesssim \bE\left[\bm1(\GG\in \Omega) \left(\frac{N^{8\fo}}{(N\eta)}\right)^{6p}\frac{\ell^{2p}}{\kappa^p}\right]\lesssim \bE\left[\bm1(\GG\in \Omega) \frac{1}{(N\eta)^{4p}\kappa^p}\right].
\end{align}
By taking the $2p$-th moment on both sides of \eqref{e:ImQ}, we get
\begin{align*}
&\phantom{{}={}}\bE\left[\bm1(\cG\in\Omega) \min\{|Q-\msc|, |Q+b-\msc|\}^{2p}\right]\lesssim \bE\left[\bm1(\GG\in \Omega) \frac{(|Q -Y |/\ell)^{2p}+(\ell|Q-\msc|^3)^{2p}}{(\kappa)^p}\right]\\
&\lesssim \bE\left[\bm1(\GG\in \Omega)\frac{\min\{|Q  -\msc|, |Q+b-\msc|\}^{2p}}{(N\eta \sqrt{\kappa})^p}\right]+\bE\left[\bm1(\GG\in \Omega)\left(\frac{1}{N^2\eta\kappa}+\frac{1}{(N\eta)^{3}\kappa^{1/2}}\right)^p\right],
\end{align*}
where in the last line we used \eqref{e:goodQ-YQ2}, \eqref{e:goodQ-YQ3}, and $N\eta\sqrt{\kappa}\geq \fC$.
By rearranging, we conclude 
\begin{align*}
\bE\left[\bm1(\cG\in\Omega) \Im[Q-\msc]^{2p}\right]&\lesssim\bE\left[\bm1(\cG\in\Omega) \min\{|Q-\msc|, |Q+b-\msc|\}\right]\\
&\lesssim\bE\left[\bm1(\GG\in \Omega)\left(\frac{1}{N^2\eta\kappa}+\frac{1}{(N\eta)^{3}\kappa^{1/2}}\right)^p\right],
\end{align*}
and 
\begin{align*}
\bE\left[\bm1(\cG\in\Omega) \Im[m_N-\md]^{2p}\right]\lesssim \bE\left[\bm1(\GG\in \Omega)\left(\frac{1}{N^2\eta\kappa}+\frac{1}{(N\eta)^{3}\kappa^{1/2}}\right)^p\right].
\end{align*}
This finishes the proof of \eqref{e:Qm_bound}.
\end{proof}

\subsection{Proofs of \Cref{p:sti_tightness} and \Cref{p:eig_concentration}}

\begin{proof}[Proof of \Cref{p:sti_tightness}]
From \eqref{e:moment1} and \eqref{e:Qm_bound}, we have for any $N\eta\sqrt{\kappa+\eta}\geq \fC$, conditioned on $\cG\in \Omega$, with probability at least $1-\OO(M^{-{D+1}})$, the following holds
\begin{align}\label{e:mdif1}
|m_N(z)-\md(z)|\leq \frac{M}{N\Im[z]}.
\end{align}

Next we show by an union bound that conditioned on $\cG\in \Omega$, with probability at least $1-\OO(M^{-D})$, \eqref{e:mdiff_tight} holds. We take a lattice 
\begin{align}\label{e:defmbfL}
{\mathbf L}=\{z=\gamma/4(\bZ+\ri\bZ)\in \bC^+: \gamma\leq \Im[z]\leq 2(M/N)^{2/3},  |\Re[z]-2|\leq 2\gamma\}.
\end{align}
By an union bound, we have that conditioned on $\cG\in \Omega$, \eqref{e:mdif1} holds for all $z\in {\mathbf L}$, with probability at least $1-\OO(M^{-D})$.

We notice the following Lipschitz property of $m_N$
\begin{align}\label{e:dz}
|\del_z m_N(z)|\leq \frac{\Im[m_N(z)]}{\Im[z]}.
\end{align}
For any $z\in {\mathbf L}$, and any $|z'-z|\leq \gamma/2$, we have $\Im[z], \Im[z']\geq \gamma/2$. By integrating \eqref{e:dz} from $z$ to $z'$, 
\begin{align}\label{e:Imbound}
\Im[m_N(z')]\lesssim e^{2|z'-z|/\gamma}\Im[m_N(z)]\lesssim \Im[m_N(z)].
\end{align}
And similarly we also have $\Im[\md(z')]\lesssim \Im[\md(z)]$. Then
we have
\begin{align}\begin{split}\label{e:mmdd} 
&\phantom{{}={}}|(m_N(z')-\md(z'))-(m_N(z)-\md(z))|
\lesssim |m_N(z')-m_N(z)|+|\md(z')-\md(z))|\\
&\lesssim \int_{z}^{z'}\frac{(\Im[m_N(w)]+\Im[\md(w)])|\rd w|}{\Im[w]}\rd w\lesssim \frac{|z'-z|(\Im[m_N(z)]+\Im[\md(z)])}{\Im[z]}\\
&\lesssim\frac{(\gamma/2)(\Im[m_N(z)-\md(z)]+2\Im[\md(z)])}{\Im[z]}  \lesssim \frac{M}{N\eta}+\frac{\gamma\Im[\md(z)]}{\Im[z]}
\lesssim \frac{M}{N\eta}+\frac{\gamma\sqrt{|z-2|}}{\Im[z]}
\lesssim \frac{M}{N\eta},
\end{split}\end{align}
where the second line follows from integrating \eqref{e:dz} and \eqref{e:Imbound}; the last line follows from \eqref{e:mdif1}, $\Im[m_d(z)]\lesssim \sqrt{|z-2|}$ from \eqref{e:imm_behavior} and our assumption $\gamma\sqrt{|z-2|}\lesssim \gamma (M/N)^{1/3}\lesssim M/N$. 

By our construction of ${\mathbf L}$ in \eqref{e:defmbfL}, for a $z'$ such that $ |\Re[z']-2|\leq 2\gamma, \; \gamma\leq \Im[z']\leq 2(M/N)^{2/3}$ there exists a $z\in {\mathbf L}$ such that $|z'-z|\leq \gamma/2$. Thus we conclude from \eqref{e:mdif1} and \eqref{e:mmdd} that \eqref{e:mdiff_tight} holds with probability at least $1-\OO(M^{-D})$.

Next we prove \eqref{e:mdiff_tight2}. As a consequence of \eqref{e:mdiff_tight}, for these $z$ such that $ |\Re[z]-2|\leq 2\gamma, \; \gamma\leq \Im[z]\leq 2(M/N)^{2/3}$ we also have that
\begin{align}\label{e:mdif2}
|\Im[m_N(z)-\md(z)]|\leq \frac{M}{N\eta}.
\end{align}
Notice that $y \Im[m_N(x+\ri y)]$ is monotone increasing in $y$.
If  for some $z'=E+\ri \eta'$ such that $|\Im[m_N(z')-\md(z')]|, \Im[\md(z')]\leq M/N\eta'$, then for any $\eta\leq \eta'$ and $z=E+\ri\eta$,
\begin{align}\begin{split}\label{e:below_the_curve}
|\Im[m_N(z)-\md(z)]|
&\leq \Im[m_N(z)]+\Im[\md(z)]
\leq \frac{\eta'}{\eta}\Im[m_N(z')]+\Im[\md(z)]\\
&\leq \frac{\eta'}{\eta}|\Im[m_N(z')]-\Im[\md(z')]|
+\frac{2\eta'}{\eta}\Im[\md(z')]
\lesssim \frac{M}{N\eta}.
\end{split}\end{align}
Thus if \eqref{e:mdif2} holds for $z$ such that $ |\Re[z]-2|\leq 2\gamma, \; \gamma\leq \Im[z]\leq 2(M/N)^{2/3}$, then for any $2-2\gamma\leq x\leq 2+2\gamma$, we can take $z=x+\ri \gamma$. Then it holds $\Im[m_N(z)-\md(z)]\leq M/N\Im[z]$, and from \eqref{e:imm_behavior} $\Im[\md(z)]\lesssim \sqrt{|z-2|}\lesssim \sqrt{\gamma}\leq M/(N\gamma)=M/(N\Im[z])$. Then \eqref{e:below_the_curve} implies that \eqref{e:mdiff_tight2} holds. This finishes the proof.
\end{proof}

To prove \Cref{lem:parti-clo}, we require the following lemma, which translates estimates on the Stieltjes transform into an estimate for the linear eigenvalue statistics. This result is a slight modification of \cite[Lemma 3.7]{bourgade2022optimal}, we sketch its proof in \Cref{app:Green}.
\begin{lemma}  \label{lem:parti-clo}
There exists a large constant $\fC>0$ such that the following holds. Assume that for some $\gamma>0$ and $M>0$, with  $2-\gamma\leq x\leq 2+\gamma$, the following bounds hold
\begin{align}\begin{split}\label{e:mNbound}
&|m_N(x+\ri y)-\md(x+\ri y)|\leq M/N y,\text{ for } \eta<y\leq
 2\gamma,\\
&|\Im[m_N(x+\ri y)-\md(x+\ri y)]|\leq M/N y,\text{ for } 0<y\leq 2\gamma.
\end{split}\end{align}
Then for any twice differentiable function $f$ supported on $[2-\gamma, 2+\gamma]$, we have
\begin{align}\label{e:intf}
\left|\sum_{i=1}^N f(\lambda_i)-N\int_\bR f(x) \varrho_d(x)\rd x\right| \leq \fC M\left(\frac{\|f\|_1}{\gamma}+\eta \|f''\|_1 +\log(\gamma/\eta)\|f'\|_1\right).
\end{align}
\end{lemma}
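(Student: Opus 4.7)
\textbf{Proof proposal for \Cref{lem:parti-clo}.}
My plan is the standard Helffer--Sjöstrand approach, using the almost analytic extension
\[
\widetilde f(x+\ri y) \;=\; (f(x)+\ri y f'(x))\chi(y),
\]
where $\chi \in C_c^\infty(\mathbb R)$ is even, equal to $1$ on $[-\gamma,\gamma]$ and vanishing outside $[-2\gamma, 2\gamma]$, with $\|\chi'\|_\infty \lesssim 1/\gamma$. Since $f$ is supported on $[2-\gamma, 2+\gamma]$, the Helffer--Sjöstrand formula yields
\[
\sum_{i=1}^N f(\lambda_i) - N\int_{\mathbb R} f\,\rd\varrho_d \;=\; \frac{N}{\pi}\int_{\mathbb R^2} \del_{\bar z}\widetilde f(z)\bigl(m_N(z)-\md(z)\bigr)\,\rd x\,\rd y,
\]
with
$\del_{\bar z}\widetilde f(z) = \tfrac{\ri}{2}\bigl[y f''(x)\chi(y) + (f(x)+\ri y f'(x))\chi'(y)\bigr]$.
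I would then split this integral into three pieces and estimate each against the input \eqref{e:mNbound}.

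For the boundary piece involving $\chi'(y)$, the integrand is supported on $\gamma \le |y|\le 2\gamma$, where \eqref{e:mNbound} gives $|m_N-\md|\lesssim M/(N\gamma)$. A direct estimate yields a contribution bounded by $M\|f\|_1/\gamma + M\|f'\|_1$, which is absorbed into the $M\|f\|_1/\gamma$ and $M\log(\gamma/\eta)\|f'\|_1$ terms on the right-hand side of \eqref{e:intf}.

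For the main piece $\tfrac{\ri N}{2\pi}\int y f''(x)\chi(y)(m_N-\md)\,\rd x\,\rd y$, I would split at $|y|=\eta$. On $|y|\le \eta$, using that $\chi$ is even and $(m_N-\md)(x-\ri y)=\overline{(m_N-\md)(x+\ri y)}$, the real parts of $m_N-\md$ cancel after the substitution $y\mapsto -y$, leaving only $\Im(m_N-\md)$, which is controlled by $M/(Ny)$ all the way down to $y=0$ by the second bound of \eqref{e:mNbound}. The resulting integral is bounded by
\[
\frac{N}{\pi}\int_0^{\eta}\int y\,|f''(x)|\cdot \frac{M}{Ny}\,\rd x\,\rd y \;\lesssim\; M\eta\|f''\|_1.
\]

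The delicate piece is the region $\eta<|y|\le 2\gamma$ of the $f''$ term, which is where I expect the real work. A naive bound would produce $M\gamma\|f''\|_1$, which is too large. The fix is to integrate by parts in $x$ to move one derivative from $f''$ to $m_N-\md$, and then use the Cauchy--Riemann identity $\del_x m_N = -\ri\del_y m_N$ (valid since $m_N$ is analytic on the upper half-plane, and the same for $\md$) to convert the resulting $\del_x$ into $\del_y$. Setting $F(y):=\int f'(x)(m_N-\md)(x+\ri y)\rd x$, this rewrites
\[
\int_\eta^{2\gamma}\!\int y\chi(y) f''(x)(m_N-\md)(x+\ri y)\,\rd x\,\rd y \;=\; \ri\int_\eta^{2\gamma} y\chi(y) F'(y)\,\rd y.
\]
Integration by parts in $y$ produces a boundary term at $y=\eta$, namely $\ri\eta F(\eta)$, which is bounded by $M\|f'\|_1/N$ (using $|F(\eta)|\le \|f'\|_1 \cdot M/(N\eta)$), and a bulk term $-\ri\int_\eta^{2\gamma}(\chi(y)+y\chi'(y))F(y)\rd y$, where $|F(y)|\le M\|f'\|_1/(Ny)$ yields the logarithmic contribution $(M/N)\log(\gamma/\eta)\|f'\|_1$. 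Treating $y<0$ symmetrically and multiplying by $N/\pi$, the total contribution from this region is $\OO(M\log(\gamma/\eta)\|f'\|_1)$, as required. Summing the three estimates gives \eqref{e:intf}. The only genuine obstacle is the $|y|>\eta$ piece of the $f''$ term, where the combined use of Cauchy--Riemann and integration by parts in $y$ is what avoids losing a factor of $\gamma/\eta$.
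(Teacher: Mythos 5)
Your proposal is correct and takes essentially the same route as the paper: both use the Helffer--Sj\"ostrand formula with the almost analytic extension $(f(x)+\ri y f'(x))\chi(y)$, split the $f''$ term at $|y|=\eta$ (using only $\Im(m_N-\md)$ below $\eta$), and handle the $|y|>\eta$ piece by integration by parts in $x$ followed by Cauchy--Riemann and integration by parts in $y$. The paper merely states the resulting decomposition I--IV (citing the analogous argument in \cite{bourgade2022optimal}), whereas you spell out the derivation of the terms corresponding to III and IV, so your write-up is if anything more explicit about the same steps.
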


\begin{proof}[Proof of \Cref{p:eig_concentration}]
We consider an interval $I=[2+\kappa, 2+\kappa+\eta]$ for some $0<\eta\leq \kappa$. Taking $z=2+\kappa+\ri\eta$, then 
\begin{align}\label{e:number_particle}
\hspace{-0.16cm}\#\{s: \lambda_s\in I\}\leq \sum_{s: \lambda_s\in I }\frac{2\eta^2}{|z-\lambda_s|^2}\leq  2\eta N\Im[m_N(z)]\leq 2\eta N|\Im[m_N(z)-\md(z)]|+2\eta N\Im[\md(z)].
\end{align}
By \eqref{e:imm_behavior}, we have $2\eta N\Im[\md(z)]\asymp 2\eta^2 N/\sqrt{\kappa+\eta}\lesssim2\eta^2 N/\sqrt{\kappa} \leq 1/2$, provided we take $\eta\leq \fC_0^{-1} \kappa^{1/4}/\sqrt{N}$, for some large $\fC_0>0$. Notice that $\#\{s: \lambda_s\in I\}$ is an integer, we can rewrite the estimate \eqref{e:number_particle} as
\begin{align*}
\#\{s: \lambda_s\in I\}\leq 4\eta N|\Im[m_N(z)-\md(z)]|.
\end{align*}

Together with \eqref{e:Qm_bound}, we conclude that for $\fC/(N\sqrt{\kappa})\leq \eta\leq \fC_0^{-1} \kappa^{1/4}/\sqrt{N}$ and $0\leq \eta\leq \kappa$,
\begin{align*}\begin{split}
\bP( \#\{s: \lambda_s\in I\}\geq 1, \cG\in \Omega)\leq \bE\left[\bm1(\cG\in\Omega) \#\{s:\lambda_s\in I\}^{2p}\right]
&\lesssim 
\bE\left[\bm1(\GG\in \Omega)\left(\frac{\eta^p}{\kappa^p}+\frac{1}{(N\eta)^{p}\kappa^{p/2}}\right)\right].
\end{split}\end{align*}
Thus for any $\fC^4_0\fC N^{-2/3}\leq \kappa\leq N^{-1/2}$ and $\eta=\fC_0^{-1} \kappa^{1/4}/\sqrt{N}\geq \fC^{1/3}N^{-2/3}$ it holds
\begin{align*}
\bP( \#\{s: \lambda_s\in [2+\kappa, 2+\kappa+\eta ]\}\geq 1, \cG\in \Omega)\lesssim \frac{1}{(N^{2/3}\kappa)^{3p/4}}.
\end{align*}
And it follows that 
\begin{align}\label{e:closela}
\bP( \#\{s:\lambda_s\in [2+\kappa, 2+\kappa+N^{-2/3}]\}\geq 1, \cG\in \Omega)\lesssim \frac{1}{(N^{2/3}\kappa)^{3p/4}}.
\end{align}
Then \eqref{e:lambda2bound} follows from an union bound: for $t\geq \fC_0^4 \fC N^{-2/3}$, 
\begin{align*}
&\phantom{{}={}}\bP(N^{2/3}(\lambda_2-2)\geq t , \cG\in \Omega)\\
&\leq \sum_{t\leq j< 2N^{1/6}}\bP(N^{2/3}(\lambda_2-2)\in[j, j+1], \cG\in \Omega)
+\bP(N^{2/3}(\lambda_2-2)\geq N^{1/6}, \cG\in \Omega)\\
&\lesssim \sum_{t\leq j< 2N^{1/6}} \frac{1}{j^{3p/4}}+\frac{1}{N^D}\lesssim \frac{1}{t^D},
\end{align*}
where the last line follows from \eqref{e:closela} and the optimal eigenvalue rigidity \eqref{e:optimal_rigidity}.

The statement \eqref{e:lambdajbound} is vacuous for small $x$. For $x\geq N^{1/3}$, it follows directly from \eqref{e:optimal_rigidity}.
For the rest cases of \eqref{e:lambdajbound}, we can use  \Cref{lem:parti-clo} for $\gamma=x N^{-2/3}$. Take $f(x)=1$ for $x\in[2-\gamma,  2+\gamma]$, and $f=0$ for $x\not\in [2-2\gamma,  2+2\gamma]$, such that $|f'|\lesssim 1/\gamma, |f''|\lesssim 1/\gamma^2$. Let $M= x^{3/2}$, then $N^{1/3}\geq (M/N)^{2/3}\geq \gamma=xN^{-2/3}\geq \fC N^{-2/3}$, and
 \Cref{p:sti_tightness} verifies the assumptions in \Cref{lem:parti-clo}. We conclude that with probability at least $1-\OO(1/M^D)=1-\OO(1/x^D)$, the following holds
\begin{align*}
\#\{i: \lambda_i\in [2-\gamma, 2+\gamma]\}\leq \sum_{i=1}^N f(\lambda_i)=N\int_\bR f(x) \varrho_d(x)\rd x +\OO ( M)=\OO(N\gamma^{3/2}+M)=\OO(N\gamma^{3/2}),
\end{align*}
and \eqref{e:lambdajbound} holds by rearranging.

\end{proof}

\appendix
\section{Properties of the Green's functions}

Throughout this paper, we repeatedly use some (well-known) identities for Green's functions,
which we collect in this appendix.

%

\subsection{Schur complement formula}

Given an $N\times N$ matrix $H$ and an index set $\bT \subset \qq{N}$, recall that we denote by
$H|_\bT$ the $\bT \times \bT$-matrix obtained by restricting $H$ to $\bT$,
and that by $H^{(\bT)} = H|_{\bT^\complement}$ the matrix obtained by removing
the rows and columns corresponding to indices in $\bT$.
Thus, for any $\bT \subset \qq{N}$,
any symmetric matrix $H$ can be written (up to rearrangement of indices) in the block form
\begin{equation*}
  H = \begin{bmatrix} A& B^\top\\ B &D  \end{bmatrix},
\end{equation*}
with $A=H|_{\bT}$ and $D=H^{(\bT)}$.
The Schur complement formula asserts that, for any $z\in \bC^+$,
\begin{equation} \label{e:Schur}
 G=(H-z)^{-1}= \begin{bmatrix}
   (A-B^\top G^{(\bT)} B)^{-1} & -(A-B^\top G^{(\bT)} B)^{-1}B^\top G^{(\bT)} \\
   -G^{(\bT)} B(A-B^\top G^{(\bT)} B)^{-1} & G^{(\bT)}+G^{(\bT)} B(A-B^\top G^{(\bT)} B)^{-1}B^\top G^{(\bT)} 
 \end{bmatrix},
\end{equation}
where $G^{(\bT)}=(D-z)^{-1}$.
Throughout the paper, we often use the following special cases of \eqref{e:Schur}:
\begin{align} \begin{split}\label{e:Schur1}
  G|_{\bT} &= (A-B^\top G^{(\bT)} B)^{-1},\\
  G|_{\bT\bT^\complement}&=-G|_{\bT}B^\top G^{(\bT)},\\
   G|_{\bT^\complement}&=G^{(\bT)}+G|_{\bT^\complement\bT}(G|_{\bT})^{-1}G|_{\bT\bT^\complement}=G^{(\bT)}-G^{(\bT)}BG|_{\bT\bT^\complement},
  \end{split}
\end{align}
as well as the special case
\begin{equation} \label{e:Schurixj}
G_{ij}^{(k)} = G_{ij}-\frac{G_{ik}G_{kj}}{G_{kk}}
=G_{ij}+(G^{(k)}H)_{ik} G_{kj}.
\end{equation}


\subsection{Ward identity}

For any symmetric $N\times N$ matrix $H$, its Green's function $G(z)=(H-z)^{-1}$ satisfies
the Ward identity
\begin{equation} \label{e:Ward}
  \sum_{j=1}^{N} |G_{ij}(z)|^2= \frac{\Im G_{jj}(z)}{\eta},
\end{equation}
where $\eta=\Im [z]$. This identity provides a bound for the sum $\sum_{j=1}^{N} |G_{ij}(z)|^2$
in terms of the diagonal entries of the Green's function.

\section{Proof of \Cref{l:boundary_measure},  Proposition \ref{p:recurbound} and \Cref{lem:parti-clo}}
\label{app:Green}
\begin{proof}[Proof of \Cref{l:boundary_measure}]

For any compactly supported smooth function $f$ with $\supp(f)\in [-M, M]$, we show that for any small $y> 0$,
\begin{align}\begin{split}\label{e:fdiff}
&\left|\int _\bR f(x)\rd\mu_n(x)- \int _\bR f(x)\rd\mu(x)\right|\leq \int|u_n(x+\ri y)-u(x+\ri y)||f(x)|\rd x\\
&+10M y\|f\|_\infty\int_\bR \frac{\rd |\mu|(s)+\rd |\mu_n|(s)}{1+s^2}
+(y\ln(M/y)+y\|f\|_\infty/M)\int_{-2M}^{2M}(\rd|\mu|(s)+\rd|\mu_n|(s)),
\end{split}\end{align} 
and then the claim follows by sending $n\rightarrow \infty$ and $y\rightarrow 0+$. 

The statement \eqref{e:fdiff} follows from showing the following estimate and its analogue by replacing $u(x+\ri y)$ and $\mu$ with $u_n(x+\ri y)$ and $\mu_n$
\begin{align}\begin{split}\label{e:shift_diff}
&\phantom{{}={}}\left|\int_\bR u(x+\ri y) f(x)\rd x-\int_\bR  f(x)\rd\mu(x)\right|\\
&\leq 10M y\|f\|_\infty\int_\bR \frac{\rd |\mu|(s)}{1+s^2}
+\left(y\ln\left(\frac{M}{y}\right)+\frac{y\|f\|_\infty}{M}\right)\int_{-2M}^{2M}\rd|\mu|(s).
\end{split}\end{align}

We can rewrite the LHS of \eqref{e:shift_diff} as 
\begin{align}\label{e:diyi}
\int_\bR u(x+\ri y) f(x)\rd x-\int_\bR  f(x)\rd\mu(x)
&=\int\left(\frac{1}{\pi}\int_\bR \frac{y f(x) \rd x}{(x-s)^2+y^2}-f(s)\right)  \rd \mu(s).\end{align}
For $|s|\geq 2M$, we have 
\begin{align}\begin{split}\label{e:dier}
\frac{1}{\pi}\int_\bR \frac{y f(x) \rd x}{(x-s)^2+y^2}
\leq \frac{1}{\pi}\int_{-M}^M \frac{y \|f\|_\infty \rd x}{(x-s)^2+y^2}
\leq \frac{10 My\|f\|_\infty }{1+s^2}.
\end{split}\end{align}
For $|s|\leq 2M$
\begin{align}\begin{split}\label{e:disan}
&\phantom{{}={}}\frac{1}{\pi}\int_{-4M}^{4M} \frac{y f(x) \rd x}{(x-s)^2+y^2}-f(s)
=\frac{1}{\pi}\int_{-4M}^{4M} \frac{y (f(s)+\OO(|x-s|)) \rd x}{(x-s)^2+y^2}-f(s)\\
&\lesssim 
y\int_{-4M}^{4M}\frac{|x-s|\rd x}{(x-s)^2+y^2}+f(s)\int_{x\in \bR\setminus[-4M, 4M]}\frac{y}{(x-s)^2+y^2}\rd y\lesssim
y\ln(M/y)+\frac{yf(s)}{M}.
\end{split}\end{align}
The claim \eqref{e:shift_diff} follows from plugging \eqref{e:dier} and \eqref{e:disan} into \eqref{e:diyi}.
\end{proof}

\begin{proof}[Proof of Proposition \ref{p:recurbound}]
The proofs for $X_\ell$ and $Y_\ell$ are identical, so we will only provide the proof for $Y_\ell$. The first claim \eqref{e:Yl_derivative} follows from the Taylor expansion expression \eqref{e:recurbound}. 

To prove \eqref{e:recurbound}, we denote $\cH=\cB_\ell(o, \cY_d)$, which is the truncated $(d-1)$-ary tree at level $\ell$. We denote its vertex set as $\bH$ and normalized adjacency matrix as $H$. We denote $\bI$ and $\bI^\del$ the diagonal matrices, such that for $x,y\in \bH$,  $\bI_{xy}=\delta_{xy}$ and $\bI^\del_{xy}=\bm1(\dist_\cH(x,o)=\ell)\delta_{xy}$. Then \eqref{e:defP} gives that
\begin{align*}
Y_\ell( \Delta, z)=P(\cH,z,\Delta)
=\left(H-z-\Delta \mathbb I^\del\right)^{-1},\quad P=P(\cH,z,\msc(z))=(H-z-\msc(z)\bI^\del)^{-1}.
\end{align*}
In the rest of the proof, we will simply write $\msc=\msc(z), \md=\md(z)$. We can compute the Green's function $P(\cH, w, \Delta)$ by a perturbation argument,
\begin{align}\begin{split}\label{e:expansionGP}
P(\cH, z, \Delta)
&=\left(H-z-\Delta \mathbb I^\del\right)^{-1}=\left(H-z-\msc\mathbb I^\del-(\Delta-\msc)\mathbb I^\del\right)^{-1}\\
&=P+P\sum_{k\geq 1}((\Delta-\msc)\mathbb I^\del P)^k.
\end{split}\end{align}
With the explicit expression of $P$ as given in \eqref{e:Gtreemsc2}, we can compute
\begin{align}\label{e:PBPoo}
\left(P(\Delta-\msc)\mathbb I^\del P\right)_{oo}
&=\msc^{2\ell+2}(\Delta-\msc).
\end{align}
Moreover, for $k\geq 2$ we will show the following two relations for $P$,
\begin{align}
   \label{e:Pboundary} &\left(P\mathbb I^\del P\mathbb I^\del P\right)_{oo}=\msc^{2\ell+2}\md\left(\frac{1-\msc^{2\ell+2}}{d-1}+\frac{d-2}{d-1}\frac{1-\msc^{2\ell+2}}{1-\msc^2}\right) ,\\
    \label{e:Ptotalsum}&\left(P(\mathbb I^\del P)^{k}\right)_{oo}\lesssim (C\ell)^{k-1}.
\end{align}
The claim \eqref{e:recurbound} follows from plugging \eqref{e:Pboundary} and \eqref{e:Ptotalsum} into \eqref{e:expansionGP}.

We recall that for $l,l'\in \bH$, ${\rm anc}(l,l')$ is the distance from the common ancestor of the vertices $l,l'$ to the root $o$ in $\cH$.
The relation \eqref{e:Pboundary} follows from explicit computation using \eqref{e:Gtreemsc} and \eqref{e:Gtreemsc2}
\begin{align*}
&\phantom{{}={}}\sum_{l, l'}P_{ol}P_{ll'}P_{l'o}=\sum_{l,l'}\msc^2 \left(-\frac{\msc}{\sqrt{d-1}}\right)^{2\ell}\md\left(1-\left(-\frac{\msc}{\sqrt{d-1}}\right)^{2+2{\rm anc}(l, l')}\right)\left(-\frac{\msc}{\sqrt{d-1}}\right)^{\dist_{\cH}(l,l')}\\
&= \frac{\msc^{2\ell+2}}{(d-1)^{2\ell}}\sum_{l,l'} \md\left(1-\left(\frac{\msc}{\sqrt{d-1}}\right)^{2+2{\rm anc}(l, l')}\right)\left(-\frac{\msc}{\sqrt{d-1}}\right)^{\dist_{\cH}(l,l')}\\
&=\msc^{2\ell+2}\md\left(1-\left(\frac{\msc}{\sqrt{d-1}}\right)^{2+2\ell}
+\sum_{r=1}^\ell \left(1-\left(\frac{\msc}{\sqrt{d-1}}\right)^{2+2(\ell-r)}\right)\left(\frac{\msc}{\sqrt{d-1}}\right)^{2r}(d-2)(d-1)^{r-1}
\right)\\
&=\msc^{2\ell+2}\md\left(\frac{1-\msc^{2\ell+2}}{d-1}+\frac{d-2}{d-1}\frac{1-\msc^{2\ell+2}}{1-\msc^2}\right),
\end{align*}
where the summation in the first line is over $l,l'$ such that $\dist_{\cH}(l,o)=\dist_{\cH}(l',o)=\ell$; in the second to last line we used that for a given $l$, there are $(d-2)(d-1)^{r-1}$ values of $l'$ such that $\dist_{\cH}(l, l')=2r, {\rm anc}(l, l')=\ell-r$ for $1\leq r\leq \ell$. When $l=l'$, it holds $\dist_{\cH}(l, l')=0, {\rm anc}(l, l')=\ell$.

For each $i,j\in \bH$, let $|P|_{ij}\deq |P_{ij}|$.
From the above computation, for any $l'$ such that $\dist_{\cH}(l',o)=\ell$, we have
\begin{align*}
\sum_l |P_{ll'}|\lesssim \ell.
\end{align*}
where the summation in the first term is over $l$ such that $\dist_{\cH}(l,o)=\ell$. Then it follows 
\begin{align*}
\left(P(\mathbb I^\del P)^{k}\right)_{oo}
\lesssim \sum_{l_1, l_2,\cdots, l_k} |P_{ol_1}| |P_{l_1 l_2}|\cdots |P_{l_k o}|
\lesssim (C\ell)^{k-1}.
\end{align*}
\end{proof}

\begin{proof}[Proof of \Cref{lem:parti-clo}]
Take another smooth test function $\chi=\chi^{(s)}:\bR\to\bR_{\ge 0}$, with $\chi=1$ on $[-\gamma, \gamma]$, $\chi=0$ on $\bR\setminus [-2\gamma, 2\gamma]$, and $|\chi'|\lesssim 100/\gamma$ on $\bR$. 
Let $\tilde{f}(x+\ri y)=(f(x)+\ri y f'(x))\chi(y)$ for any $x,y\in\bR$.
Then by the Helffer–Sj{\" o}strand formula \cite[Section 11.2]{erdHos2017dynamical}, we have
\begin{align}\begin{split}\label{e:HS}
&\left|\sum_{i=1}^N f(\lambda_i)-N\int_\bR f(x) \varrho_d(x)\rd x\right|
\leq \fC(\mbox{I+II+III+IV})\\
&\mbox{I}=N\iint_{y\geq 0} (|f(x)+ y |f'(x)|)|\chi'(y)| |m_N(x+\ri y)-\md(x+\ri y)|\rd x\rd y\\
&\mbox{II}= N\iint_{0\leq y\leq \eta} f''(x)y\chi(y) |\Im[m_N(x+\ri y)-\md(x+\ri y)]|\rd x\rd y\\
&\mbox{III}= N\iint_{\eta\leq y} |f'(x)||\del_y(y\chi(y))| |m_N(x+\ri y)-\md(x+\ri y)|\rd x\rd y\\
&\mbox{IV}= N\int_\bR \eta |f'(x)|  |m_N(x+\ri\eta)-\md(x+\ri\eta)|\rd x.
\end{split}\end{align}
The claim \eqref{e:intf} follows from plugging \eqref{e:mNbound} into \eqref{e:HS}.
\end{proof}

\bibliography{ref}{}
\bibliographystyle{abbrv}

\end{document}